\date{July 18, 2015}
\let\oldsection\section
\renewcommand\section{\setcounter{equation}{0}\oldsection}
\newtheorem{corollary}{Corollary}[section]
\newtheorem{theorem}{Theorem}[section]
\newtheorem{lemma}{Lemma}[section]
\newtheorem{proposition}{Proposition}[section]
\newtheorem{definition}{Definition}[section]
\newtheorem{remark}{Remark}[section]
\begin{document}

\title[A tropical atmosphere model with moisture]{A tropical atmosphere model with moisture: global well-posedness and relaxation limit}


\author{Jinkai~Li}
\address[Jinkai~Li]{Department of Computer Science and Applied Mathematics, Weizmann Institute of Science, Rehovot 76100, Israel}
\email{jklimath@gmail.com}

\author{Edriss~S.~Titi}
\address[Edriss~S.~Titi]{
Department of Mathematics, Texas A\&M University, 3368 TAMU, College Station, TX 77843-3368, USA. ALSO, Department of Computer Science and Applied Mathematics, Weizmann Institute of Science, Rehovot 76100, Israel.}
\email{titi@math.tamu.edu and edriss.titi@weizmann.ac.il}

\keywords{tropical-extratropical interactions; atmosphere with moisture; primitive equations; relaxation limit; variational inequality.}
\subjclass[2010]{35M86, 35Q35, 76D03, 86A10.}


\begin{abstract}
In this paper, we consider a nonlinear interaction system between the barotropic mode and the first baroclinic mode of
the tropical atmosphere with moisture; that was derived in
[Frierson,~D.~M.~W.; Majda,~A.~J.; Pauluis,~O.~M.: \emph{Dynamics of precipitation
fronts in the tropical atmosphere: a novel relaxation limit}, Commum. Math. Sci.,
\bf2~\rm(2004), 591--626.] We establish the global existence and uniqueness
of strong solutions to this system, with initial data in $H^1$,
for each fixed convective adjustment relaxation time parameter
$\varepsilon>0$. Moreover, if the initial data enjoy slightly more regularity than $H^1$, then the unique strong solution depends continuously on the initial data. Furthermore, by establishing several appropriate $\varepsilon$-independent
estimates, we prove that the system converges to a limiting system, as the relaxation time parameter $\varepsilon$ tends to zero, with   
convergence rate  of the order $O(\sqrt\varepsilon)$. Moreover, the limiting
system has a unique global strong solution, for any initial data in $H^1$, and such unique strong solution depends continuously on
the initial data if the the initial data posses slightly more regularity than $H^1$. Notably, this solves the
\textsc{viscous version} of an open problem proposed  in the above mentioned paper of Frierson, Majda and Pauluis.
\end{abstract}

\maketitle


\allowdisplaybreaks
\section{Introduction}
\label{sec1}
\subsection{The primitive equations for planetary atmospheric dynamics}
In the context of large-scale atmosphere, the ratio of the vertical scale to the horizontal scale is very small, which, by scale analysis, see, e.g., \cite{PED,VALLIS}, leads to the hydrostatic approximation in the vertical momentum equation. This small aspect ratio limit can be rigorously justified, see \cite{LITITIHYDRO,AZGU}.
Taking into account the Boussinesq approximation and the hydrostatic approximation to the Navier-Stokes equations, one obtains the primitive equations, which model the large-scale atmospheric dynamics.

The primitive equations read  (see, e.g., \cite{HAWI,LEWAN,MAJBOOK,PED,VALLIS,WP,ZENG})
\begin{equation}\label{PE}
  \left\{
  \begin{array}{l}
    \partial_t\textbf{V}+(\textbf{V}\cdot\nabla_h)\textbf{V}+W\partial_z\textbf{V}
     -\mu\Delta\textbf{V}+\nabla_h\Phi=0,\\
    \partial_z\Phi=\frac{g\Theta}{\theta_0},\\
    \partial_t\Theta+\textbf{V}\cdot\nabla_h\Theta+W\partial_z\Theta +\frac{N^2\theta_0}{g}W=S_{\Theta},\\
    \nabla_h\cdot\textbf{V}+\partial_zW=0,
  \end{array}
  \right.
\end{equation}
where the unknowns $\textbf{V}=(V_1,V_2)^T$, $W$, $\Phi$ and $\Theta$ are the horizontal velocity field, vertical velocity, pressure and potential temperature, respectively, while the positive constant $\mu$ is the viscosity coefficient. The total potential temperature is given by
$$
\Theta^{\text{total}}(x,y,z,t)=\theta_0+\bar\theta(z)+\Theta(x,y,z,t),
$$
where $\theta_0$ is a positive reference constant temperature and $\bar\theta$ defines the vertical profile background stratification, satisfying $N^2=(g/\theta_0)\partial_z\bar\theta>0$, where $N$ is the Brunt-V\"ais\"al\"a buoyancy frequency. Here we use $\nabla_h=(\partial_x,\partial_y)$ to denote the horizontal gradient and $\textbf{V}^\perp=(-V_2, V_1)^T$.

During the last two
decades, a lot of efforts have been done on the mathematical studies of the primitive equations.
Up to now, it has been known that the primitive equations, with full viscosity and full diffusivity, have global weak
solutions (but the uniqueness is still unclear),
see \cite{LTW92A,LTW92B,LTW95}, and have a unique
global strong solution, see \cite{CAOTITI07,KUZI1,KUZI2,KOB}, and also see \cite{CAOTITI12,CAOLITITI1,CAOLITITI2,LITITITCM}
for some recent developments
towards the direction of partial dissipation cases. Moreover, the recent works
\cite{CAOLITITI3,CAOLITITI4,CAOLITITI5} show
that the horizontal viscosity turns out
to be more crucial than the vertical one for the global well-posedness,
because the results there show that
the vertical viscosity is not required for the global
well-posedness of strong solutions to the primitive equations. Notably, the invicid primitive
equations may develop finite time singularities, see \cite{CINT,WONG}. Combining the results of \cite{CAOLITITI3,CAOLITITI4,CAOLITITI5} and those of \cite{CINT,WONG}, one can conclude that the horizontal viscosity is necessary for the global well-posedness of the primitive equations, and if ignoring the temperature effect, the horizontal viscosity is also sufficient for the global well-posedness.

\subsection{The barotropic and the first baroclinic modes interaction system}
In the tropics, the wind in the lower troposphere is of equal magnitude but with opposite sign to that in the upper troposphere, in other words, the primary effect is captured in the first baroclinic mode. However, for the study of the tropical-extratropical interactions, where the transport of momentum between the barotropic and baroclinic modes plays an important role, it is necessary to retain both the barotropic and baroclinic modes of the velocity.

Consider the primitive equations (\ref{PE}) in the layer $\mathbb R^2\times(0,H)$, for a positive constant $H$. Since we consider the tropical atmosphere and take into consideration the tropical-extratropical interactions, we can impose an ansatz of the form
\begin{equation*}
  \begin{pmatrix}
  \textbf{V} \\
  \Phi \\
\end{pmatrix}(x,y,z,t)=
\begin{pmatrix}
  u \\
  p \\
\end{pmatrix}(x,y,t)+
\begin{pmatrix}
  v \\
  p_1 \\
\end{pmatrix}(x,y,t)\sqrt2\cos(\pi z/H)
\end{equation*}
and
\begin{equation*}
  \begin{pmatrix}
  W \\
  \Theta \\
\end{pmatrix}(x,y,z,t)=
\begin{pmatrix}
  w \\
  \theta \\
\end{pmatrix}(x,y,t)\sqrt2\sin(\pi z/H),
\end{equation*}
which carry the barotropic and first baroclinic modes of the unknowns.

By performing the Galerkin projection of the primitive equations in the vertical direction onto the barotropic mode and the first baroclinic mode, one derives the following dimensionless interaction, between the barotropic mode and the first baroclinic mode, system  for the tropical atmosphere (see \cite{MAJBOOK} and also \cite{MAJBIE,FRIMAJPAU,KHOMAJ1,STEMAJ} for the details):
\begin{equation}\label{1.1}
\left\{
\begin{array}{l}
  \partial_tu+(u\cdot\nabla)u-\Delta u+\nabla p+\nabla\cdot(v\otimes v) =0,\\
  \nabla\cdot u=0,\\
  \partial_tv+(u\cdot\nabla)v-\Delta v+(v\cdot\nabla)u =\nabla\theta,\\
  \partial_t\theta+u\cdot\nabla\theta-\nabla\cdot v=S_\theta,
  \end{array}
  \right.
\end{equation}
where $u=(u_1, u_2)$ is the barotropic velocity, and $v=(v_1, v_2)$, $p$ and $\theta$, respectively, are the first baroclinic modes of the velocity, pressure and the temperature. The system is now defined on $\mathbb R^2$, and the operators $\nabla$ and $\Delta$ are therefore those for the variables $x$ and $y$.

\subsection{The moisture equation}
An important ingredient of the tropical atmospheric circulation is the water vapour. Water vapour is the most abundant greenhouse gas in the atmosphere, and it is responsible for  amplifying the long-term warming or cooling cycles. Therefore, one should also consider the coupling with an equation modeling moisture in the atmosphere.

Following \cite{FRIMAJPAU}, we couple  system (\ref{1.1}) with the following large-scale moisture equation
\begin{equation}
  \partial_tq+u\cdot\nabla q+\bar Q\nabla\cdot v=-P,\label{1.5}
\end{equation}
where $\bar Q$ is the prescribed gross moisture stratification. The precipitation rate $P$ is parameterized, according to \cite{FRIMAJPAU,STEMAJ,NEEZEN,KHOMAJ2}, as
\begin{equation}\label{EQ}
P=\frac{1}{\varepsilon}(q-\alpha\theta-\hat q)^+,
\end{equation}
where $f^+=\max\{f, 0\}$ denotes the positive part of $f$, $\varepsilon$ is a convective adjustment time scale parameter, and $\alpha$ and $\hat q$ are constants, with $\hat q>0$.

In order to close system (\ref{1.1})--(\ref{1.5}), one still needs to parameterize the source term $S_\theta$ in the temperature equation. Generally, the temperature source $S_\theta$ combines  three kinds of effects: the radiative cooling, the sensible heat flux and the precipitation $P$. For simplicity, and as in  \cite{FRIMAJPAU,MAJSOU}, we only consider in this paper the precipitation source term, i.e., we set
$$
S_\theta=P,
$$
with $P$ given by (\ref{EQ}).

As in \cite{FRIMAJPAU,MAJSOU}, by introducing the equivalent temperature $T_e$ and the equivalent moisture $q_e$ as
$$
T_e=q+\theta,\quad q_e=q-\alpha\theta-\hat q,
$$
system (\ref{1.1})--(\ref{1.5}) can be rewritten as
\begin{eqnarray}
  &&\partial_tu+(u\cdot\nabla)u-\Delta u+\nabla p+\nabla\cdot(v\otimes v)=0,\label{eq1}\\
  &&\nabla\cdot u=0,\label{eq2}\\
  &&\partial_tv+(u\cdot\nabla)v-\Delta v+(v\cdot\nabla)u=\frac{1}{1+\alpha}\nabla(T_e-q_e),\label{eq3}\\
  &&\partial_tT_e+u\cdot\nabla T_e-(1-\bar Q)\nabla\cdot v=0,\label{eq4}\\
  &&\partial_tq_e+u\cdot\nabla q_e+(\bar Q+\alpha)\nabla\cdot v=-\frac{1+\alpha}{\varepsilon} q_e^+, \label{eq5}
\end{eqnarray}
in $\mathbb R^2\times(0,\infty)$, where the constants $\alpha$ and $\bar Q$ are required to satisfy (see \cite{FRIMAJPAU})
\begin{equation}
  \label{req}
   0<\bar Q<1,\quad \alpha+\bar Q>0.
\end{equation}

\subsection{Main results}
We will work in the framework of strong solutions, which are defined below.

\begin{definition}
  \label{def}
  Given a positive time $\mathcal T$ and the initial data $(u_0, v_0, T_{e,0}, q_{e,0})$. A function $(u, v, T_e, q_e)$ is called a strong solution to system (\ref{eq1})--(\ref{eq5}), on $\mathbb R^2\times(0,\mathcal T)$, with initial data $(u_0, v_0, T_{e,0}, q_{e,0})$, if it enjoys the following regularities
  \begin{eqnarray*}
  &(u, v)\in C([0,\mathcal T]; H^1(\mathbb R^1))\cap L^2(0,T; H^2(\mathbb R^2)),\\
  &(\partial_tu,\partial_tv,\partial_tT_e, \partial_tq_e)\in L^2(0,T; L^2(\mathbb R^2)), \\
  &(T_e,q_e)\in C([0,\mathcal T]; L^2(\mathbb R^2))\cap L^\infty(0,T; H^1(\mathbb R^2)),
\end{eqnarray*}
and satisfies equations (\ref{eq1})--(\ref{eq5}), a.e.~on $\mathbb R^2\times(0,\mathcal T)$, and has the initial value
$$(u,v,T_e,q_e)|_{t=0}=(u_0, v_0, T_{e,0}, q_{e,0}).$$
\end{definition}

\begin{definition}
A function $(u, v, T_e, q_e)$ is called a global strong solution to system (\ref{eq1})--(\ref{eq5}), if it is a strong solution to system (\ref{eq1})--(\ref{eq5}), on $\mathbb R^2\times(0,\mathcal T)$, for any positive time $\mathcal T$.
\end{definition}

Throughout this paper, for positive integer $k$ and positive $q\in[1,\infty]$, we use $L^q(\mathbb R^2)$ and $W^{k,q}(\mathbb R^2)$ to denote the standard Lebesgue and Sobolev spaces, respectively, and when $q=2$, we use $H^k(\mathbb R^2)$, instead of $W^{k,2}(\mathbb R^2)$. For simplicity, we usually use $\|f\|_q$ to denote the $\|f\|_{L^q(\mathbb R^2)}$.

The first main result of this paper is on the global existence, uniqueness and well-posedness of strong solutions to the Cauchy problem of system (\ref{eq1})--(\ref{eq5}):

\begin{theorem}
  \label{glopositive}
Suppose that (\ref{req}) holds, and the initial data
\begin{equation}\label{asum}
(u_0, v_0, T_{e,0}, q_{e,0})\in H^1(\mathbb R^2),\quad\mbox{with}\quad \nabla\cdot u_0=0.
\end{equation}
Then, we have the following:

(i) There is a unique global strong solution $(u,v,T_e,q_e)$ to system (\ref{eq1})--(\ref{eq5}), with initial data $(u_0, v_0, T_{e,0}, q_{e,0})$, such that
\begin{align*}
\sup_{0\leq t\leq\mathcal T}&\|(u, v,T_e,q_e)(t)\|_{H^1}^2
+\int_0^\mathcal T\bigg(\frac{\|q_e^+\|_{H^1}^2}{\varepsilon}+
\|(u,v)\|_{H^2}^2+\|\nabla u\|_\infty\bigg)dt\\
&+\int_0^\mathcal T\|(\partial_tu,\partial_tv,\partial_t T_e)\|_2^2  dt
\leq C\left(\alpha,\bar Q, \mathcal T, \|(u_0, v_0, T_{e,0}, q_{e,0})\|_{H^1}\right),
\end{align*}
for any positive time $\mathcal T$, here and what follows, we use $C(\cdots)$ to denote a general positive constant depending only on the quantities in the parenthesis.

(ii) Suppose, in addition to (\ref{asum}), that $q_{e,0}\leq0$, a.e.~on $\mathbb R^2$, then
$$
\sup_{0\leq t\leq \mathcal T}\frac{\|q_e^+(t)\|_2^2}{\varepsilon} +\int_0^\mathcal T\|\partial_tq_e\|_2^2dt
\leq C\left(\alpha,\bar Q, \mathcal T, \|(u_0, v_0, T_{e,0}, q_{e,0})\|_{H^1}\right),
$$
for any positive time $\mathcal T$.

(iii) Suppose, in addition to (\ref{asum}), that $(\nabla T_{e,0},\nabla q_{e,0})\in L^m(\mathbb R^2)$, for some $m\in(2,\infty)$, then the following estimate holds
\begin{align*}
  \sup_{0\leq t\leq\mathcal T}\|(\nabla T_{e},\nabla q_{e})(t)\|_m^2
  \leq C\left(\alpha,\bar Q, \mathcal T, \|(u_0, v_0, T_{e,0}, q_{e,0})\|_{H^1},\|(\nabla T_{e,0},\nabla q_{e,0})\|_m\right),
\end{align*}
for any positive time $\mathcal T$, and the unique strong solution $(u,v, T_e, q_e)$ depends continuously on the initial data, on any finite interval of time.
\end{theorem}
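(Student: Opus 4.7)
I would construct the unique global strong solution by a suitable approximation (for example, a Galerkin scheme on an expanding family of bounded domains combined with a smooth monotone Lipschitz regularization $\phi_\delta(q_e)\to q_e^+$), derive a chain of a priori estimates uniform in the approximation parameters and in $\varepsilon$, and pass to the limit. Uniqueness, parts (ii) and (iii), and continuous dependence are then handled as separate refinements.

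\emph{Existence and $H^1$ estimates for (i).} Testing (\ref{eq1})--(\ref{eq5}) against $u,v,T_e,q_e$, the decisive algebraic observation is the cancellation
\[
\int_{\mathbb R^2}\nabla\cdot(v\otimes v)\cdot u\,dx+\int_{\mathbb R^2}(v\cdot\nabla)u\cdot v\,dx=0,
\]
which removes the nonlinear momentum transfer between the barotropic and baroclinic modes. The residual couplings through $\nabla(T_e-q_e)$ and $\nabla\cdot v$ are absorbed by Young's inequality into the parabolic dissipation $\|\nabla v\|_2^2$, producing $(u,v)\in L^\infty_tL^2\cap L^2_tH^1$ and $(T_e,q_e)\in L^\infty_tL^2$, together with $\int_0^{\mathcal T}\|q_e^+\|_{H^1}^2/\varepsilon\,dt\le C$. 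For the $H^1$ estimate I test (\ref{eq1}), (\ref{eq3}) against $-\Delta u,-\Delta v$ and differentiate (\ref{eq4}), (\ref{eq5}) and test against $\nabla T_e,\nabla q_e$; the transport terms vanish by $\nabla\cdot u=0$, the relaxation term keeps the favourable sign via $\nabla q_e^+=\chi_{\{q_e>0\}}\nabla q_e$ a.e., and the critical nonlinearity $\int(\nabla u)\nabla T_e\cdot\nabla T_e\,dx$ is bounded by $\|\nabla u\|_\infty\|\nabla T_e\|_2^2$. Coupling to the 2D Brezis--Gallouet inequality and closing by a logarithmic Gronwall argument yield the $H^1$ bound together with $\int_0^{\mathcal T}\|\nabla u\|_\infty\,dt\le C$; the $H^2$ and time-derivative bounds then follow by rearranging the equations. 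Uniqueness is obtained from the $L^2$-difference estimate, using the Lipschitz bound $|q_e^+-\tilde q_e^+|\le|\delta q_e|$ and the integrability of $\|\nabla u\|_\infty$.

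\emph{Parts (ii) and (iii).} For (ii), testing (\ref{eq5}) against $q_e^+$ and absorbing the $\nabla\cdot v$ coupling by Young with a small parameter gives
\[
\tfrac{d}{dt}\|q_e^+\|_2^2+\tfrac{1+\alpha}{\varepsilon}\|q_e^+\|_2^2\le C\varepsilon\|\nabla v\|_2^2.
\]
Since $q_{e,0}^+\equiv0$, Duhamel yields $\sup_t\|q_e^+(t)\|_2^2\le C\varepsilon$; a further time integration combined with Fubini then improves this to $\int_0^{\mathcal T}\|q_e^+\|_2^2\,dt\le C\varepsilon^2$, so the relaxation term $\frac{1+\alpha}{\varepsilon}q_e^+$ is uniformly bounded in $L^2_{t,x}$, and $\int\|\partial_tq_e\|_2^2\,dt\le C$ follows from the $q_e$-equation. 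For (iii), applying $\nabla$ to (\ref{eq4}) and (\ref{eq5}) and testing against $|\nabla T_e|^{m-2}\nabla T_e$ and $|\nabla q_e|^{m-2}\nabla q_e$ yields
\[
\tfrac{d}{dt}\|\nabla T_e\|_m\le\|\nabla u\|_\infty\|\nabla T_e\|_m+C\|\nabla^2 v\|_m
\]
(and the analogous estimate for $\nabla q_e$, the relaxation term again contributing a good sign). To place $\|\nabla^2v\|_m$ in a time-integrable space I would use parabolic maximal regularity in $L^q_tL^m_x$ with the exponent $q<2$ chosen so that $H^1$ embeds into the trace space at $t=0$ (no extra regularity on $v_0$ is assumed); a standard continuity/bootstrap on $\nabla T_e,\nabla q_e$ makes the forcing of the $v$-equation lie in the same space, and the Gronwall inequality closes because $\int_0^{\mathcal T}\|\nabla u\|_\infty\,dt<\infty$.

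\emph{Continuous dependence.} For two strong solutions with close data, the critical cross term in the $L^2$-difference estimate for $\delta T_e$ is $\int\delta u\cdot\nabla\tilde T_e\cdot\delta T_e\,dx$, which I would bound by
\[
\|\delta u\|_{2m/(m-2)}\|\nabla\tilde T_e\|_m\|\delta T_e\|_2\le\eta\|\nabla\delta u\|_2^2+C_\eta\|\nabla\tilde T_e\|_m^2\|\delta T_e\|_2^2,
\]
using the 2D Sobolev embedding $H^1\hookrightarrow L^{2m/(m-2)}$ and absorbing $\|\nabla\delta u\|_2^2$ into the parabolic dissipation coming from the $u$-equation. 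This is precisely where the assumption $\nabla T_{e,0},\nabla q_{e,0}\in L^m$ with $m>2$ is indispensable: without it $\|\nabla\tilde T_e\|_m$ need not be finite and the Gronwall coefficient would be uncontrolled. The relaxation contribution is handled by the Lipschitz bound on $q_e^+$, Gronwall with the time-integrable coefficient $\|\nabla u\|_\infty+\|\nabla\tilde T_e\|_m^2+\|\nabla\tilde q_e\|_m^2$ gives $L^2$-stability, and interpolation with the uniform $H^1$ bounds delivers continuous dependence. The principal obstacle throughout is the absence of diffusion in the $T_e$ and $q_e$ equations: every estimate on these variables must be routed through transport structure, which is why the Brezis--Gallouet logarithmic refinement in (i) and the extra $L^m$ hypothesis in (iii) are both essential.
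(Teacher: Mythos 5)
Your overall architecture matches the paper's, but there is a genuine gap at the single most critical step of part (i): the bound $\int_0^{\mathcal T}\|\nabla u\|_\infty\,dt\leq C$. You claim this follows from ``the 2D Brezis--Gallouet inequality and a logarithmic Gronwall argument,'' but Brezis--Gallouet applied to $\nabla u$ reads $\|\nabla u\|_\infty\leq C\|\nabla u\|_{H^1}\log^{1/2}\big(e+\|\nabla u\|_{H^2}/\|\nabla u\|_{H^1}\big)$, and the quantity inside the logarithm is at the level of $\|u\|_{H^3}$, which is not controlled anywhere in your scheme (and cannot be, with only $H^1$ data, without weighted-in-time estimates); there is no closed logarithmic Gronwall system available at the $H^1$ level, because the only equations that could feed back an $H^3$ bound on $u$ require exactly the $\|(\nabla T_e,\nabla q_e)\|_2$ control you are trying to establish. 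Since $T_e$ and $q_e$ have no diffusion, the entire global theory hinges on this $L^1_t W^{1,\infty}$ bound, so the gap is not cosmetic. The paper obtains it by a different mechanism: first an $L^\infty_tL^4$ estimate on $(u,v,T_e,q_e)$ together with $\int_0^{\mathcal T}\|\nabla v\|_4^4\,dt\leq C$ (which itself needs a Calder\'on--Zygmund bound on the pressure and a parabolic estimate from the appendix), then the splitting $u=\bar u+\hat u$ into the Duhamel part (zero data, forcing $|U|\nabla U\in L^3_{t,x}$ thanks to the $L^4$ layer, hence $\Delta\bar u\in L^3_{t,x}$ by maximal regularity and $\nabla\bar u\in L^1_tL^\infty$ by Gagliardo--Nirenberg) and the free heat part (handled by the weighted multiplier $(t\Delta^2-\Delta)\hat u$, giving $\int_0^{\mathcal T}\|\nabla\hat u\|_\infty\,dt\leq C\mathcal T^{1/2}\|\nabla u_0\|_2$). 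Your proposal omits the $L^4$ layer entirely, and without it the forcing $\nabla\cdot(v\otimes v)$ is not integrable enough for any maximal-regularity substitute. Relatedly, you have placed Brezis--Gallouet in the wrong spot: in the paper it is used for uniqueness, applied to the difference $\delta U$ (which does lie in $L^2_tH^2$, so the logarithm is harmless and the Osgood-type Lemma \ref{lemuniq} closes the argument); your one-line uniqueness claim via ``the integrability of $\|\nabla u\|_\infty$'' does not explain how to handle the term $\int|\nabla\tilde T_e||\delta u||\delta T_e|\,dxdy$ when $\nabla\tilde T_e$ is only in $L^\infty_tL^2$, which is precisely where that logarithmic machinery is needed.

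The remaining parts are essentially sound and in places take legitimately different routes. For (ii), testing against $q_e^+$ and running Duhamel to get $\sup_t\|q_e^+\|_2^2\leq C\varepsilon$ and $\varepsilon^{-2}\int_0^{\mathcal T}\|q_e^+\|_2^2\,dt\leq C$ is a clean alternative to the paper's test against $\partial_tq_e$, and it does deliver $\partial_tq_e\in L^2_{t,x}$. For (iii), your idea of using maximal $L^q_tL^m_x$ regularity with $q<m/(m-1)<2$ so that $H^1(\mathbb R^2)$ embeds into the Besov trace space $B^{2-2/q}_{m,q}$ is a viable replacement for the paper's elementary weighted estimate (Lemma \ref{lemapp3}), though it is heavier machinery for the same output $\int_0^{\mathcal T}\|\nabla^2v\|_m\,dt<\infty$. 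The continuous-dependence argument via $\|\delta u\|_{2m/(m-2)}\|\nabla\tilde T_e\|_m\|\delta T_e\|_2$ is exactly the paper's. But all of (ii), (iii) and the stability estimate sit downstream of the $\int_0^{\mathcal T}\|\nabla u\|_\infty\,dt$ bound, so the proposal does not constitute a proof until that step is repaired along the lines above.
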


Formally, by taking the relaxation limit, as $\varepsilon\rightarrow0^+$, system (\ref{eq1})--(\ref{eq5}) will converge to the following limiting system
\begin{eqnarray}
  &&\partial_tu+(u\cdot\nabla)u-\mu\Delta u+\nabla p+\nabla\cdot(v\otimes v)=0,\label{ineq1}\\
  &&\nabla\cdot u=0,\label{ineq2}\\
  &&\partial_tv+(u\cdot\nabla)v-\mu\Delta v+(v\cdot\nabla)u=\frac{1}{1+\alpha}\nabla(T_e-q_e),\label{ineq3}\\
  &&\partial_tT_e+u\cdot\nabla T_e-(1-\bar Q)\nabla\cdot v=0,\label{ineq4}\\
  &&\partial_tq_e+u\cdot\nabla q_e+(\bar Q+\alpha)\nabla\cdot v\leq 0,\label{ineq5}\\
  &&q_e\leq0,\label{ineq6}\\
  &&\partial_tq_e+u\cdot\nabla q_e+(\bar Q+\alpha)\nabla\cdot v= 0,\quad \mbox{a.e.~on }\{q_e<0\}. \label{ineq7}
\end{eqnarray}
Note that equation (\ref{eq5}) is now replaced by three inequalities (\ref{ineq5})--(\ref{ineq7}).

Inequality (\ref{ineq5}) comes from equation (\ref{eq5}), by noticing the negativity of the term $-\frac{1+\alpha}{\varepsilon}q_e^+$, while inequality
(\ref{ineq6}) is derived by multiplying both sides of equation (\ref{eq5}) by
$\varepsilon$, and taking the formal limit $\varepsilon\rightarrow0^+$. Inequality
(\ref{ineq7}) can be derived by the following heuristic argument: Let
$(u_\varepsilon, v_\varepsilon, T_{e\varepsilon}, q_{e\varepsilon})$ be a solution
to system (\ref{eq1})--(\ref{eq5}), and suppose that $(u_\varepsilon,
v_\varepsilon, T_{e\varepsilon}, q_{e\varepsilon})$ converges to $(u, v, T_e,
q_e)$, with $q_e\leq0$; for any compact subset $K$ of the set $\{(x,y,t)\in\mathbb R^2\times(0,\infty)~|~q_e(x,y,t)<0\}$, since
$q_{e\varepsilon}$ converges to $q_e$, one may have  $q_{e\varepsilon}<0$ on $K$,
for sufficiently small positive $\varepsilon$; therefore, by equation (\ref{eq5}),
it follows that $\partial_tq_{e\varepsilon}+u_\varepsilon\cdot\nabla
q_{e\varepsilon}+(\bar Q+\alpha)\nabla\cdot v_\varepsilon=0$, a.e.~on $K$, from
which, by taking $\varepsilon\rightarrow0^+$, one can see that (\ref{ineq7}) is
satisfied, a.e.~on $K$, and further a.e.~on $\{q_e<0\}$.

The other aim of this paper is to prove the global existence and uniqueness of strong solutions to the limiting system (\ref{ineq1})--(\ref{ineq7}), and rigorously justify the above formal convergences, as $\varepsilon\rightarrow0^+$. Strong solutions to system (\ref{ineq1})--(\ref{ineq7}) are defined in the similar way as those to system (\ref{eq1})--(\ref{eq5}).

\begin{theorem}\label{glozero}
Suppose that (\ref{req}) holds, and the initial data
\begin{equation}\label{asum1}
(u_0, v_0, T_{e,0}, q_{e,0})\in H^1(\mathbb R^2),\quad \nabla\cdot u_0=0,\quad q_{e,0}\leq0,\mbox{ a.e.~on }\mathbb R^2.
\end{equation}
Then, there is a unique global strong solution $(u,v,T_e,q_e)$ to system (\ref{ineq1})--(\ref{ineq7}), with initial data $(u_0, v_0, T_{e,0}, q_{e,0})$, such that
\begin{align*}
  \sup_{0\leq t\leq\mathcal T} \|(u, v,T_{e},q_{e})(t)\|_{H^1}^2
  & +\int_0^\mathcal T\left(
  \|(u,v)\|_{H^2}^2+\|\nabla u\|_\infty+\|(\partial_tu,\partial_tv,\partial_tT_e,
  \partial_tq_{e})\|_2^2\right) dt\\
  \leq&C\left(\alpha,\bar Q, \mathcal T, \|(u_0, v_0, T_{e,0}, q_{e,0})\|_{H^1}\right),
\end{align*}
for any positive time $\mathcal T$.

If we assume, in addition, that $(\nabla T_{e,0}, \nabla q_{e,0})\in L^m(\mathbb R^2)$, for some $m\in(2,\infty)$, then we have further that
\begin{align*}
  \sup_{0\leq t\leq\mathcal T}\|(\nabla T_{e},\nabla q_{e})(t)\|_m^2
  \leq C\left(\alpha,\bar Q, \mathcal T, \|(u_0, v_0, T_{e,0}, q_{e,0})\|_{H^1},\|(\nabla T_{e,0},\nabla q_{e,0})\|_m\right),
\end{align*}
for any positive time $\mathcal T$, and the unique strong solution $(u, v, T_e, q_e)$ depends continuously on the initial data.
\end{theorem}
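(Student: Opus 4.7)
The plan is to construct the strong solution of (\ref{ineq1})--(\ref{ineq7}) as the $\varepsilon\downarrow 0$ limit of the solutions produced by Theorem \ref{glopositive}, and then prove uniqueness and continuous dependence by energy estimates on differences that exploit an obstacle-type sign cancellation. The chief difficulty will be the rigorous treatment of the singular penalization---in particular preserving both the inequality (\ref{ineq5}) and the complementarity (\ref{ineq7}) in the limit---since the latter is a free-boundary condition rather than a PDE.

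\emph{Uniform bounds and compactness.} For each $\varepsilon>0$ let $(u_\varepsilon,v_\varepsilon,T_{e,\varepsilon},q_{e,\varepsilon})$ be the unique strong solution of (\ref{eq1})--(\ref{eq5}) with the prescribed initial data. Since $q_{e,0}\le 0$, parts (i) and (ii) of Theorem \ref{glopositive} apply; decisively, every constant in those bounds is independent of $\varepsilon$. Thus on any interval $[0,\mathcal T]$, $(u_\varepsilon,v_\varepsilon)$ is uniformly bounded in $L^\infty(0,\mathcal T;H^1)\cap L^2(0,\mathcal T;H^2)$, $(T_{e,\varepsilon},q_{e,\varepsilon})$ in $L^\infty(0,\mathcal T;H^1)$, all time derivatives in $L^2(0,\mathcal T;L^2)$, and $\|q_{e,\varepsilon}^+\|_{L^\infty(0,\mathcal T;L^2)}^2\le C\varepsilon$. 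Aubin--Lions on exhausting bounded subdomains, together with a diagonal extraction, furnishes $\varepsilon_n\downarrow 0$ along which $(u_{\varepsilon_n},v_{\varepsilon_n})\to (u,v)$ strongly in $L^2(0,\mathcal T;H^1_{\mathrm{loc}})$ and $(T_{e,\varepsilon_n},q_{e,\varepsilon_n})\to (T_e,q_e)$ strongly in $L^2(0,\mathcal T;L^2_{\mathrm{loc}})$ and almost everywhere, with weak-$\ast$ convergences in the natural spaces. The limit enjoys the regularity demanded of a strong solution.

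\emph{Passing to the limit.} Strong convergence handles the nonlinearities in (\ref{eq1})--(\ref{eq4}), yielding (\ref{ineq1})--(\ref{ineq4}), while $\|q_{e,\varepsilon}^+\|_{L^\infty(L^2)}=O(\sqrt\varepsilon)$ forces $q_e\le 0$, i.e.\ (\ref{ineq6}). The forcing $-\tfrac{1+\alpha}{\varepsilon}q_{e,\varepsilon}^+\le 0$ and $\mathcal D'$-convergence of the left-hand side of (\ref{eq5}) give (\ref{ineq5}). For (\ref{ineq7}), set $\pi_\varepsilon:=\tfrac{1+\alpha}{\varepsilon}q_{e,\varepsilon}^+$; reading (\ref{eq5}) as an equation for $\pi_\varepsilon$ and using the uniform $L^2$ control of its remaining terms shows $\pi_\varepsilon$ is uniformly bounded in $L^2(0,\mathcal T;L^2)$, so one extracts $\pi_\varepsilon\rightharpoonup\pi\ge 0$ and the limit equation becomes $\partial_tq_e+u\cdot\nabla q_e+(\bar Q+\alpha)\nabla\cdot v=-\pi$. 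For any open $\Omega\subset\subset\{q_e<0\}$ pick $c>0$ with $q_e<-2c$ on $\Omega$; on $\Omega$ the support of $\pi_\varepsilon$ lies in $A_\varepsilon:=\Omega\cap\{q_{e,\varepsilon}>-c\}\subset\{|q_{e,\varepsilon}-q_e|>c\}$, and Chebyshev combined with $L^2_{\mathrm{loc}}$ convergence yields $|A_\varepsilon|\to 0$, whence
\begin{equation*}
\Big|\int_0^{\mathcal T}\!\!\int_{\mathbb R^2}\pi_\varepsilon\phi\,dx\,dt\Big|\le\|\phi\|_\infty|A_\varepsilon|^{1/2}\|\pi_\varepsilon\|_{L^2(L^2)}\longrightarrow 0
\end{equation*}
for every $\phi\in C_c^\infty(\Omega)$. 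Hence $\pi=0$ on $\{q_e<0\}$, proving (\ref{ineq7}). This measure-theoretic argument is the key new ingredient, transferring the support property $\{\pi_\varepsilon>0\}\subset\{q_{e,\varepsilon}>0\}$ to the limit.

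\emph{Uniqueness and continuous dependence.} Given two strong solutions, let $U=u^1-u^2$, $V=v^1-v^2$, $T=T_e^1-T_e^2$, $Q=q_e^1-q_e^2$, and $\lambda^i:=\partial_tq_e^i+u^i\cdot\nabla q_e^i+(\bar Q+\alpha)\nabla\cdot v^i\le 0$. By (\ref{ineq7}) together with $q_e^i\le 0$, one has $\lambda^iq_e^i=0$ a.e., so
\begin{equation*}
\int_{\mathbb R^2}(\lambda^1-\lambda^2)Q\,dx=-\int\lambda^1q_e^2\,dx-\int\lambda^2q_e^1\,dx\le 0,
\end{equation*}
as the product of two nonpositive factors is nonnegative. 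This cancellation lets us discard the obstacle reaction entirely in the $L^2$ energy identity for $Q$, which is essential because the $q_e$-equation has no diffusive smoothing. A suitably weighted $L^2$ energy for $(U,V,T,Q)$ then combines the parabolic dissipation in $(U,V)$ with integration by parts to absorb the gradient couplings $\int V\cdot\nabla(T-Q)\,dx$, reducing matters to controlling the non-dissipative transports $\int(U\cdot\nabla q_e^2)Q$ and $\int(U\cdot\nabla T_e^2)T$; these are bounded via the two-dimensional Ladyzhenskaya inequality, the a priori estimate $\int_0^{\mathcal T}\|\nabla u^i\|_\infty\,dt<\infty$ from Theorem \ref{glopositive}, and---for continuous dependence on initial data---the extra $L^m$ ($m>2$) regularity of $\nabla T_e,\nabla q_e$. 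A Gr\"onwall argument in $L^2$ closes both the uniqueness and the continuous-dependence statements.
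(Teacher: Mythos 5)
Your overall strategy coincides with the paper's: existence via the relaxation limit $\varepsilon\to0^+$ of Theorem \ref{glopositive}, and uniqueness/continuous dependence via $L^2$ energy estimates on differences in which the obstacle reaction is discarded by a sign argument. Two of your implementations differ from the paper's in a harmless or even advantageous way. For (\ref{ineq7}) you pass to a weak $L^2$ limit $\pi$ of the penalization $\pi_\varepsilon=\frac{1+\alpha}{\varepsilon}q_{e\varepsilon}^+$ and kill it on $\{q_e<0\}$ by a Chebyshev estimate on $|\{q_{e\varepsilon}>-c\}|$, where the paper instead applies Egoroff's theorem on the sets $\{q_e<-1/j\}\cap(B_k\times(0,l))$ to get $q_{e\varepsilon}<0$ off a small set and concludes via weak convergence of the whole transport expression; both work, but note that $\{q_e<0\}$ is only a measurable set, so you cannot exhaust it by open sets $\Omega$ and test against $C_c^\infty(\Omega)$ — you should instead test $\pi_\varepsilon$ against characteristic functions of the measurable sets $\{q_e<-1/j\}\cap(B_k\times(0,l))$ and use $\pi\geq0$; the Chebyshev bound goes through verbatim. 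For uniqueness, your identity $\lambda^iq_e^i=0$ a.e.\ and the resulting monotonicity $\int(\lambda^1-\lambda^2)(q_e^1-q_e^2)\,dxdy\le0$ is a clean, equivalent repackaging of the paper's four-way domain decomposition $\Omega_1,\dots,\Omega_4$ and its sign observations on $\nabla\cdot v$, $\nabla\cdot\tilde v$.

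There is, however, a genuine gap in how you propose to close the uniqueness estimate. After the cancellations, the problematic terms are $\int_{\mathbb R^2}|\nabla q_e^2||U||Q|\,dxdy$ and $\int_{\mathbb R^2}|\nabla T_e^2||U||T|\,dxdy$, where for mere $H^1$ data one only knows $\nabla q_e^2,\nabla T_e^2\in L^\infty(0,\mathcal T;L^2)$ and $Q,T$ carry no dissipation. The natural bound is $\|\nabla q_e^2\|_2\|U\|_\infty\|Q\|_2$, and in two dimensions $\|U\|_\infty$ is \emph{not} controlled by $\|U\|_{H^1}$; the Ladyzhenskaya inequality gives only $\|U\|_4$, which would require $\nabla q_e^2\in L^4$, and the bound $\int_0^{\mathcal T}\|\nabla u^i\|_\infty\,dt<\infty$ concerns the solutions, not the difference $U$. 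A plain Gr\"onwall argument in $L^2$ therefore does not close. The paper resolves this borderline case with the Brezis--Gallouet--Wainger inequality $\|U\|_\infty\leq C\|U\|_{H^1}\log^{1/2}\big(\|U\|_{H^2}/\|U\|_{H^1}+e\big)$, absorbing $\|U\|_{H^1}$ into the parabolic dissipation and invoking the Osgood-type Lemma \ref{lemuniq} (as in the uniqueness part of Proposition \ref{loc}) to conclude $f\equiv0$ from $f'+G\leq m_1f+m_2[fG\log^+(S/G)]^{1/2}$. Your proposal needs this logarithmic mechanism (or an equivalent Osgood argument) for the uniqueness claim; for continuous dependence your use of the extra $L^m$, $m>2$, regularity of $(\nabla T_e,\nabla q_e)$ does suffice, exactly as in the paper.
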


\begin{theorem}
  \label{cong}
 Suppose that (\ref{req}) holds and the initial data
\begin{eqnarray*}
  &&(u_0, v_0, T_{e,0}, q_{e,0})\in H^1(\mathbb R^2),\quad \nabla\cdot u_0=0,\\
  &&(\nabla T_{e,0},\nabla q_{e,0})\in L^m(\mathbb R^2),\quad q_{e,0}\leq0,\mbox{ a.e.~on }\mathbb R^2,
\end{eqnarray*}
for some $m\in(2,\infty)$. Denote by $(u_\varepsilon, v_\varepsilon, T_{e\varepsilon}, q_{e\varepsilon})$ and $(u, v, T_e, q_e)$ the unique global strong solutions to systems (\ref{eq1})--(\ref{eq5}) and (\ref{ineq1})--(\ref{ineq7}), respectively, with the same initial data $(u_0, v_0, T_{e,0}, q_{e,0})$.

Then, we have the estimate
\begin{align*}
  &\sup_{0\leq t\leq\mathcal T}\|(u_\varepsilon-u, v_\varepsilon-v, T_{e\varepsilon}-T_e,q_{e\varepsilon}-q_e)(t)\|_2^2\\
  &+\int_0^\mathcal T\left(\|(\nabla(u_\varepsilon-u),\nabla(v_\varepsilon-v))\|_2^2 +\frac{\|q_{e\varepsilon}^+\|_2^2}{\varepsilon}\right)dt\leq C\varepsilon,
\end{align*}
for any finite positive time $\mathcal T$, where $C$ is a positive constant depending only on $\alpha, \bar Q, m, \mathcal T$, and the initial norm $\|(u_0, v_0,q_{e,0}, T_{e,0})\|_{H^1}+\|(\nabla T_{e,0},\nabla q_{e,0})\|_m$.

Therefore, in particular, we have the convergences
\begin{eqnarray*}
  &(u_\varepsilon, v_\varepsilon)\rightarrow(u, v)\quad\mbox{in }L^\infty(0,\mathcal T; L^2(\mathbb R^2))\cap L^2(0,\mathcal T; H^1(\mathbb R^2)),\\
  &(T_{e\varepsilon}, q_{e\varepsilon})\rightarrow(T_e,q_e)\quad\mbox{in }L^\infty(0,\mathcal T; L^2(\mathbb R^2)),\quad q_{e\varepsilon}^+\rightarrow0\quad\mbox{in }L^2(0,\mathcal T; L^2(\mathbb R^2)),
\end{eqnarray*}
for any positive time $\mathcal T$, and the convergence rate is of order $O(\sqrt\varepsilon)$.
\end{theorem}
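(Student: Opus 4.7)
The plan is to set up a differential inequality for the $L^2$ norms of the differences $U:=u_\varepsilon-u$, $V:=v_\varepsilon-v$, $\Theta:=T_{e\varepsilon}-T_e$, and $Q:=q_{e\varepsilon}-q_e$, and to show that its right-hand side is controlled by a linear function of these norms plus an $O(\varepsilon)$ error, after which Gronwall's inequality closes the argument. Subtracting the two systems and using $\nabla\cdot u_\varepsilon=0$ to rewrite each advective term in the form $(u_\varepsilon\cdot\nabla)(\cdot)+(U\cdot\nabla)(\cdot)$, one obtains transport--diffusion equations for $U$ and $V$ and pure-transport equations for $\Theta$ and $Q$. The right-hand side of the $Q$-equation is the delicate one: it equals $-\frac{1+\alpha}{\varepsilon}q_{e\varepsilon}^++P_0$, where $P_0:=-\partial_tq_e-u\cdot\nabla q_e-(\bar Q+\alpha)\nabla\cdot v\ge 0$ is the ``defect'' encoding the variational inequality (\ref{ineq5})--(\ref{ineq7}); by (\ref{ineq7}) it is supported in $\{q_e=0\}$, and by the regularity furnished in Theorem~\ref{glozero}, together with the two-dimensional embedding $H^2\hookrightarrow L^\infty$, it lies in $L^2(0,\mathcal T;L^2(\mathbb R^2))$.

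I would next perform the standard $L^2$ energy estimate on each of the four equations. The pressure-like coupling $\frac{1}{1+\alpha}\nabla(\Theta-Q)$ in the $V$-equation and the divergence couplings $(1-\bar Q)\nabla\cdot V$ and $-(\bar Q+\alpha)\nabla\cdot V$ in the $\Theta$- and $Q$-equations are symmetric up to positive multiplicative constants, so by forming the weighted composite energy
\begin{equation*}
\mathcal E(t):=\tfrac12\|U\|_2^2+\tfrac12\|V\|_2^2+\tfrac{1}{2(1-\bar Q)(1+\alpha)}\|\Theta\|_2^2+\tfrac{1}{2(\bar Q+\alpha)(1+\alpha)}\|Q\|_2^2,
\end{equation*}
every mixed term involving $\nabla\cdot V$ cancels exactly after integration by parts, leaving only the dissipation, the transport nonlinearities, the penalty, and the defect.

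The heart of the argument is the treatment of the right-hand side of the $Q$-equation. Since $q_e\le 0$ pointwise, one has $Q\,q_{e\varepsilon}^+=(q_{e\varepsilon}-q_e)q_{e\varepsilon}^+\ge(q_{e\varepsilon}^+)^2$, so the penalty contributes at most $-\frac{1+\alpha}{\varepsilon}\|q_{e\varepsilon}^+\|_2^2$ to $\frac{d}{dt}\mathcal E$ --- precisely the coercive dissipation demanded by the theorem. For the defect, the support condition yields $\int q_e P_0=0$, hence $\int Q P_0=\int q_{e\varepsilon}P_0\le\int q_{e\varepsilon}^+P_0$, and Young's inequality gives
\begin{equation*}
\int q_{e\varepsilon}^+P_0\le\frac{1+\alpha}{2\varepsilon}\|q_{e\varepsilon}^+\|_2^2+C\varepsilon\|P_0\|_2^2,
\end{equation*}
of which the first summand is absorbed by the penalty and the second is the desired $O(\varepsilon)$ error (integrable in time thanks to $P_0\in L^2_{t,x}$).

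The remaining nonlinear terms are then routine. The only ones that deserve attention are the transport cross-terms $\int\Theta\,U\cdot\nabla T_e$ and $\int Q\,U\cdot\nabla q_e$, and here I would invoke the hypothesis $(\nabla T_{e,0},\nabla q_{e,0})\in L^m$ with $m>2$: by Theorems~\ref{glopositive}(iii) and~\ref{glozero}, it propagates to a uniform bound on $\|(\nabla T_e,\nabla q_e)\|_m$. H\"older together with the two-dimensional Gagliardo--Nirenberg inequality yields
\begin{equation*}
\Big|\int\Theta\,U\cdot\nabla T_e\Big|\le C\|\nabla T_e\|_m\|\Theta\|_2\|U\|_2^{1-2/m}\|\nabla U\|_2^{2/m},
\end{equation*}
and Young absorbs $\|\nabla U\|_2^2$ into the viscous dissipation while producing the harmless factor $C\|\nabla T_e\|_m^{m/(m-1)}(\|\Theta\|_2^2+\|U\|_2^2)$. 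The purely quadratic terms in $(U,V)$ such as $\int U\cdot(U\cdot\nabla)u$ and $\int\nabla U:(V\otimes v_\varepsilon+v\otimes V)$ are controlled by 2D interpolations using the bounds on $\|\nabla u\|_\infty$ and $\|(u,v)\|_{H^2}$ from Theorems~\ref{glopositive} and~\ref{glozero}. Combining everything yields
\begin{equation*}
\tfrac{d}{dt}\mathcal E+c\bigl(\|\nabla U\|_2^2+\|\nabla V\|_2^2\bigr)+\tfrac{c}{\varepsilon}\|q_{e\varepsilon}^+\|_2^2\le A(t)\mathcal E+C\varepsilon\|P_0\|_2^2
\end{equation*}
with $A\in L^1(0,\mathcal T)$; since $\mathcal E(0)=0$, Gronwall delivers the $O(\varepsilon)$ bound on the squared norms, hence the asserted $O(\sqrt\varepsilon)$ convergence rate. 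The only non-routine step, and the main obstacle, is the bookkeeping for the defect $P_0$: one must observe that $q_e\le 0$ forces $Q\,q_{e\varepsilon}^+\ge(q_{e\varepsilon}^+)^2$, that $P_0$ is in $L^2_{t,x}$ and supported in $\{q_e=0\}$, and that the factor $1/\varepsilon$ in the penalty can be traded against an $O(\varepsilon)$ error via Young's inequality --- this single algebraic manoeuvre is what makes the variational-inequality relaxation limit work.
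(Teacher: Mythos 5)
Your proposal is correct and follows essentially the same route as the paper's proof: an $L^2$ energy estimate on the differences, the pointwise inequality $(q_{e\varepsilon}-q_e)q_{e\varepsilon}^+\geq(q_{e\varepsilon}^+)^2$ from $q_e\leq0$, the identification of the defect (which the paper writes explicitly as $-(\bar Q+\alpha)\nabla\cdot v\,\chi_{\{q_e=0\}}$, nonnegative and supported on $\{q_e=0\}$ by (\ref{ineq5})--(\ref{ineq7}) and the a.e.-vanishing of derivatives on level sets), the Young-inequality trade of half the $\varepsilon^{-1}$ penalty against an $O(\varepsilon)$ error, the $L^m$ bounds on $(\nabla T_e,\nabla q_e)$ for the transport cross terms, and Gronwall. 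The only cosmetic differences are that you cancel the $\nabla\cdot V$ couplings with a weighted energy while the paper absorbs them by Young's inequality, and that you package the defect abstractly as $P_0$ rather than computing it explicitly; neither affects the argument.
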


\begin{remark}
(i) In the absence of the barotropic mode, global existence and uniqueness of strong solutions to the inviscid limiting system was proved in \cite{MAJSOU}, and the relaxation limit, as $\varepsilon\rightarrow0^+$, was also studied there, but the convergence rate was not achieved. Note that in the absence of the barotropic mode, the limiting system is linear, while in the presence of the barotropic mode, the limiting system is nonlinear.

(ii) Existence and uniqueness of solutions to the limiting system (\ref{ineq1})--(\ref{ineq7}), without viscosity, was proposed as an open problem in \cite{FRIMAJPAU}, and also in \cite{MAJSOU,KHOMAJSTE,MAJAB}. Notably, Theorem \ref{glozero} settles this open problem for  the \textsc{viscous version} of (\ref{ineq1})--(\ref{ineq7}). Note that we only add viscosity to the velocity equations, and we do not use any diffusivity in the temperature and moisture equations.
\end{remark}

\begin{remark}
  Global well-posedness of strong solutions to a coupled system of the primitive equations with moisture (therefore, it is a different system from those considered in this paper) was recently addressed in \cite{ZHKTZ}, where the system under consideration has full dissipation in all dynamical equations, and in particular has diffusivity in the temperature and moisture equations. Note that we do not need any diffusivity in the temperature and moisture equations in order to establish global regularity of the systems considered in this paper. It is worth mentioning that the global regularity of the coupled three-dimensional  primitive equations with moisture and with partial dissipation is a subject of a forthcoming paper.
\end{remark}

The rest of this paper is organized as follows: in section \ref{preliminary}, we state and prove several preliminary lemmas, while the proofs of Theorem \ref{glopositive}, Theorem \ref{glozero} and Theorem \ref{cong} are given in section \ref{globalpositive}, section \ref{globalzero} and section \ref{convergence}, respectively. The last section is an appendix in which  we prove some parabolic estimates that are used in this paper, and which are of general interest on their own.

\section{Preliminaries}
\label{preliminary}

%
%
%

We will frequently use the following Ladyzhenskaya inequality (see, e.g., \cite{LADYZHENSKAYA})
$$
\|f\|_{L^4(\mathbb R^2)}\leq C\|f\|_{L^2(\mathbb R^2)}^{\frac12}\|\nabla f\|_{L^2(\mathbb R^2)}^{\frac12},\quad\forall f\in H^1(\mathbb R^2).
$$

The following lemma on the Gronwall type inequality will be used to establish the global in time {\it a priori} estimates to the strong solutions to system (\ref{eq1})--(\ref{eq5}) later.

\begin{lemma}
  \label{gron}
Given a positive time $\mathcal T$, a positive integer $n$ and positive numbers $r_i\in[1,\infty), 1\leq i\leq n$. Let $a_0, a_i$
and $b_i$, $1\leq i\leq n$, be nonnegative functions,
such that $a_0, a_i\in L^\infty((0,\mathcal T))$ and $b_i\in L^1((0,\mathcal T))$.
Suppose that the
nonnegative measurable function $f$ satisfies
$$
f(t)\leq a_0(t)+\sum_{i=1}^na_i(t)\left(\int_0^tb_i(s)f^{r_i}(s)ds \right)^{\frac{1}{r_i}},
$$
for any $t\in[0,\mathcal T]$. Then, the following holds
\begin{align*}
\|f\|_{L^\infty((0,\mathcal T))}\leq& (n+1)^{r-1}\|a_0\|_\infty^r\exp\left\{ (n+1)^{r-1} \sum_{i=1}^n\|a_i\|_\infty^r (1+\|b_i\|_1)^{r+1}\right\},
\end{align*}
where $r=\max_{1\leq i\leq n}r_i$, and $\|\cdot\|_1$ and $\|\cdot\|_\infty$ denote the $L^1((0,\mathcal T))$ and $L^\infty((0,\mathcal T))$ norms, respectively.
\end{lemma}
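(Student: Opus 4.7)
The plan is to reduce this to the classical scalar Gronwall inequality by first raising the hypothesis to the common power $r = \max_i r_i$ and then unifying the different exponents $r_i$ appearing inside the time integrals so that only $f^r$ occurs under the integral sign.

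First, I would apply the elementary convexity inequality $(\sum_{i=0}^n x_i)^r \le (n+1)^{r-1}\sum_{i=0}^n x_i^r$ (valid because $r\ge 1$) to the assumed bound for $f(t)$, which gives
\begin{equation*}
  f(t)^r \le (n+1)^{r-1} a_0(t)^r + (n+1)^{r-1} \sum_{i=1}^n a_i(t)^r \left(\int_0^t b_i(s) f(s)^{r_i}\, ds\right)^{r/r_i}.
\end{equation*}
Since $r/r_i \ge 1$, Jensen's inequality applied to the convex map $x \mapsto x^{r/r_i}$ with respect to the finite measure $b_i(s)\, ds$ on $(0,t)$ yields
\begin{equation*}
  \left(\int_0^t b_i(s) f(s)^{r_i}\, ds\right)^{r/r_i} \le \|b_i\|_1^{r/r_i - 1} \int_0^t b_i(s) f(s)^r\, ds \le (1+\|b_i\|_1)^{r-1} \int_0^t b_i(s) f(s)^r\, ds,
\end{equation*}
where the last step handles separately the cases $\|b_i\|_1 \le 1$ (base at most $1$, positive exponent) and $\|b_i\|_1 > 1$ (exponent at most $r-1$).

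Substituting back and bounding $a_i(t)$ by $\|a_i\|_\infty$, I arrive at a linear integral inequality
\begin{equation*}
  g(t) \le A_0 + \int_0^t K(s)\, g(s)\, ds, \qquad g := f^r,
\end{equation*}
where $A_0 := (n+1)^{r-1}\|a_0\|_\infty^r$ and
\begin{equation*}
  K(s) := (n+1)^{r-1} \sum_{i=1}^n \|a_i\|_\infty^r (1+\|b_i\|_1)^{r-1}\, b_i(s).
\end{equation*}
The classical Gronwall inequality then gives $g(t) \le A_0 \exp\bigl(\int_0^{\mathcal T} K(s)\, ds\bigr)$, and the crude estimate $(1+\|b_i\|_1)^{r-1}\|b_i\|_1 \le (1+\|b_i\|_1)^{r+1}$ reproduces the stated form of the bound (with the understanding that one may further take $r$-th roots to bound $f$ itself if so desired).

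The principal technical obstacle is the intermediate step of uniformly replacing the various $f^{r_i}$ inside the time integrals by a single $f^r$, at the price of a bounded factor depending only on $\|b_i\|_1$ and $r$; Jensen's inequality handles this cleanly precisely because $r/r_i \ge 1$. Once this unification has been achieved, everything collapses to a standard scalar Gronwall applied to the auxiliary function $f^r$, and no further ideas are required.
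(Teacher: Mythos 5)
Your proof is correct and follows essentially the same route as the paper: unify the exponents $r_i$ inside the time integrals into a single $r$ at the cost of a factor controlled by $(1+\|b_i\|_1)$, apply the convexity inequality $(\sum_{i=0}^n x_i)^r\le (n+1)^{r-1}\sum_{i=0}^n x_i^r$, and finish with the classical Gronwall inequality for $f^r$. The only (cosmetic) difference is that you perform the exponent unification via Jensen's inequality after raising to the $r$-th power, whereas the paper does it via H\"older's inequality beforehand --- these are the same estimate in disguise --- and both arguments share the same harmless imprecision of stating the final bound for $f$ rather than for $f^r$.
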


\begin{proof}
By the H\"older and Young inequalities, we deduce
\begin{align*}
  \left(\int_0^tb_i(s)f^{r_i}(s)ds\right)^{\frac{1}{r_i}} =&\left[ \int_0^tb_i^{\frac{r-r_i}{r}}(s)\left(b_i^{\frac{1}{r}}(s)f(s)\right) ^{r_i}ds\right]^{\frac{1}{r_i}}\\
  \leq&\left(\int_0^tb_i(s)ds\right)^{\frac{r-r_i}{rr_i}}
  \left(\int_0^tb_i(s)f^{r}(s)ds\right)^{\frac{1}{r}}\\
  \leq&\left(1+\|b_i\|_1\right)\left(\int_0^tb_i(s)f^{r}(s)ds\right)^ {\frac 1r},
\end{align*}
for $1\leq i\leq n$. Therefore, by assumption, we have
\begin{align*}
  f(t)\leq&\|a_0\|_\infty+\sum_{i=1}^n\|a_i\|_\infty (1+\|b_i\|_1)\left(\int_0^tb_i(s) f^{r}(s)ds\right)^{\frac 1r},
\end{align*}
from which, taking the $r$-th powers to both sides of the above inequality, and using the elementary inequality $\left(\sum_{i=0}^nc_i\right)^r\leq(n+1)^{r-1}\sum_{i=0}^nc_i^r$, where $c_i$ are positive numbers,
we arrive at
$$
f^r(t)\leq (n+1)^{r-1}\|a_0\|_\infty^r+(n+1)^{r-1} \sum_{i=1}^n\|a_i\|_\infty^r (1+\|b_i\|_1)^r\left(\int_0^tb_i(s) f^{r}(s)ds\right).
$$
Applying the Gronwall inequality to the above inequality, we have
\begin{align*}
  f^r(t)\leq&(n+1)^{r-1}\|a_0\|_\infty^r\exp\left\{ (n+1)^{r-1} \sum_{i=1}^n\|a_i\|_\infty^r (1+\|b_i\|_1)^r\int_0^tb_i(s)ds\right\}\\
  \leq&(n+1)^{r-1}\|a_0\|_\infty^r\exp\left\{ (n+1)^{r-1} \sum_{i=1}^n\|a_i\|_\infty^r (1+\|b_i\|_1)^{r+1}\right\},
\end{align*}
from which, taking the $r$-th power root to both sides of the above inequality, and taking the supremum with respective to $t$ over $(0,\mathcal T)$, one obtains the conclusion.
\end{proof}

The next lemma will be employed to prove the uniqueness of strong solutions.

\begin{lemma}\label{lemuniq}
Given a positive time $\mathcal T$, and let $m_1, m_2$ and $S$ be nonnegative functions on $(0,\mathcal T)$, such that
$$
m_1, S\in L^1((0,\mathcal T)),\quad m_2\in L^2((0,\mathcal T)), \mbox{ and } S>0, \mbox{ a.e.~on } (0,\mathcal T).
$$
Suppose that $f$ and $G$ are two nonnegative functions on $(0,\mathcal T)$, with $f$ being absolutely continuous on $[0,\mathcal T)$, and satisfy
\begin{equation*}
\left\{
\begin{array}{l}
f'(t)+G(t)\leq m_1(t)f(t)+m_2(t)\left[f(t)G(t)\log^+\left(\frac{S(t)}{G(t)}\right) \right]^{\frac12},\quad\mbox{ a.e.~on }(0,\mathcal T),\\
f(0)=0,
\end{array}
\right.
\end{equation*}
where $\log^+z=\max\{0,\log z\}$, for $z\in(0,\infty)$, and when $G(t)=0$, at some time $t\in[0,\mathcal T)$, we adopt the following natural convention
$$
G(t)\log^+\left(\frac{S(t)}{G(t)}\right)=\lim_{z\rightarrow 0^+}z\log^+\left(\frac{S(t)}{z}\right)=0.
$$
Then, we have $f\equiv0$ on $[0,\mathcal T)$.
\end{lemma}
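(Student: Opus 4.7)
The plan is to reduce the hypothesis to an Osgood-type differential inequality of the form
\[
f'(t) \leq M(t)\,f(t) + N(t)\,f(t)\,\log^+\!\bigl(1/f(t)\bigr)\quad\text{a.e.\ on }(0,\mathcal T),
\]
with $M, N \in L^1((0,\mathcal T))$, and then to conclude $f\equiv 0$ by a standard Osgood-type uniqueness argument based on the substitution $u = -\log f$.

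The key reduction step eliminates $G$ from the right-hand side by a pointwise optimization. Since the hypothesis holds for the actual $G(t)$, one has for a.e.\ $t$,
\[
f'(t) \leq m_1(t)\, f(t) + \sup_{\gamma\geq 0}\!\left[m_2(t)\sqrt{f(t)\,\gamma\,\log^+\!(S(t)/\gamma)} - \gamma\right].
\]
When $f(t), m_2(t) > 0$, the supremum is finite and attained at an interior critical point $\gamma^*\in(0,S)$; setting $\ell := \log(S/\gamma^*)$, the first-order condition reads $m_2^2 f(\ell-1)^2 = 4 S e^{-\ell}\ell$. The elementary bound $\ell/(\ell-1)^2 \leq 2$, valid for $\ell \geq 2$, then yields $\ell \leq 2 + \log^+\!\bigl(8S/(m_2^2 f)\bigr)$, so the supremum is dominated by $(m_2^2 f/4)\,\ell$. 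Splitting via $\log^+(AB)\leq \log^+ A+\log^+ B$ produces the target inequality with
\[
M = m_1 + \tfrac{m_2^2}{2} + \tfrac{m_2^2}{4}\log^+\!\bigl(8S/m_2^2\bigr),\qquad N = \tfrac{m_2^2}{4};
\]
integrability $M,N \in L^1$ follows from $m_1 \in L^1$, $m_2 \in L^2$, and the trivial bound $\log^+ x\leq x$, which gives $(m_2^2/4)\log^+(8S/m_2^2)\leq 2S\in L^1$. On $\{f=0\}$, absolute continuity of $f\geq 0$ forces $f' = 0$ a.e., making the target inequality automatic there.

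The Osgood step is by contradiction. If $f \not\equiv 0$, then by continuity and $f(0)=0$ one selects an interval $(a,b)\subset \{f > 0\}$ with $f(a)=0$, and, shrinking $b$ if necessary, arranges that $0 < f < 1$ on $(a,b)$. Letting $u := -\log f > 0$, the Osgood-type inequality rewrites as $u' + N u \geq -M$; multiplying by $\exp\!\bigl(\int_0^t N\bigr)$, integrating from $a+\varepsilon$ to $t\in(a,b)$, and sending $\varepsilon \to 0^+$, the boundary term $u(a+\varepsilon)\exp\!\bigl(\int_0^{a+\varepsilon}N\bigr)$ diverges to $+\infty$ (because $u(a+\varepsilon)\to +\infty$), while the remaining contributions stay bounded thanks to $M,N \in L^1$. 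This forces $u(t) = +\infty$, i.e.\ $f(t) = 0$, contradicting $f > 0$ on $(a,b)$.

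The main obstacle is the optimization over $\gamma$: a direct Young-type estimate on $m_2\sqrt{fG\log^+(S/G)}$ merely absorbs the $G$ on the left-hand side without removing it from the logarithm, while any cruder bound that drops the $G$-dependence (for instance $G\log^+(S/G)\leq S/e$) is too weak to yield uniqueness---it produces only a nonzero Gronwall-type upper bound on $f$. Extracting the asymptotic behavior $\ell\sim\log(1/f)$ from the implicit critical-point equation is precisely what produces the Osgood kernel $f\log^+(1/f)$ on the right-hand side, and this is what drives the uniqueness conclusion.
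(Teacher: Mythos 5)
Your proof is correct, but it takes a genuinely different route from the paper's. The paper never extracts an Osgood inequality: it bounds $\log^+z\leq z^\sigma/(\sigma e)$ for an arbitrary parameter $\sigma\in(0,1)$, absorbs $G$ into the left-hand side by Young's inequality with $\sigma$-dependent exponents, arrives at $f'\leq m_1f+m_2^{2/(1+\sigma)}S^{\sigma/(1+\sigma)}(f/\sigma)^{1/(1+\sigma)}$, solves this Bernoulli-type inequality for $f^{\sigma/(1+\sigma)}$ via Gronwall, and obtains $f(t)\leq\sigma\, e^{\int m_1}\int S$ on a subinterval where $\int m_2^2\leq1$; letting $\sigma\rightarrow0^+$ forces $f=0$ there, and a continuity/contradiction argument does the rest. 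You instead perform a pointwise optimization over $G$, using the first-order condition $m_2^2f(\ell-1)^2=4Se^{-\ell}\ell$ to show the optimal logarithm satisfies $\ell\leq 2+\log^+\bigl(8S/(m_2^2f)\bigr)$, which converts the hypothesis into a genuine Osgood inequality $f'\leq Mf+Nf\log^+(1/f)$ with $M,N\in L^1$ (your bound $m_2^2\log^+(8S/m_2^2)\leq 8S$ settling integrability), after which the classical substitution $u=-\log f$ finishes. I checked the optimization: when the supremum over $\gamma$ is positive it is attained at an interior critical point, the Young bound $m_2\sqrt{f\gamma\ell}\leq m_2^2f\ell/4+\gamma$ gives $\sup\leq(m_2^2f/4)\ell$, and the case where the supremum equals $0$ (attained only at $\gamma=0$, so not at an interior critical point as you assert) is harmless since the target bound is nonnegative. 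Your approach buys a cleaner conceptual picture---the hypothesis really does hide a log-Lipschitz structure in $f$ alone---and avoids the paper's localization to intervals with $\int m_2^2\leq1$; the paper's approach buys freedom from any critical-point calculus, at the cost of carrying the one-parameter family of inequalities and the limit $\sigma\rightarrow0^+$.
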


\begin{proof}
Suppose, by contradiction, that there is some time $t_*\in(0,\mathcal T)$, such that $f(t_*)>0$. Recalling that $f$ is absolutely continuous on $[0,\mathcal T)$, by the property of continuous functions, there must be a time $t_0\in[0, t_*)$, such that $f(t_0)=0$ and $f(t)>0$, for any $t\in(t_0,t_*]$. In the rest of the proof, we will focus on the time interval $[t_0, t_*)$.
For any $\sigma\in(0,\infty)$, one can easily check that
$$
\log^+z\leq\frac{z^\sigma}{\sigma e}, \quad\mbox{for }z\in(0,\infty).
$$
Recall the Young inequality of the form $ab\leq\frac{a^p}{p}+\frac{b^q}{q}$, for any nonnegative numbers $a,b$, and for any $p,q\in(1,\infty)$, with $\frac1p+\frac1q=1$. Thanks to the above inequality, and choosing $\sigma\in(0,1)$, it follows from the assumption and the Young inequality that
\begin{eqnarray*}
  f'+G&\leq&m_1f+m_2\left[fG\frac{1}{e\sigma}\left(\frac{S}{G} \right)^\sigma\right]^{\frac12}=m_1f+m_2S^{\frac\sigma2}G^{ \frac{1-\sigma}{2}}\left(\frac{f}{e\sigma}\right)^{\frac12}\\
  &\leq&m_1f+\frac{1-\sigma}{2}G+\frac{1+\sigma}{2}\left[m_2S^{\frac\sigma2} \left(\frac{f}{e\sigma}\right)^{\frac12}\right]^{\frac{2}{1+\sigma}}\\
  &=&m_1f+\frac{1-\sigma}{2}G+\frac{1+\sigma}{2} m_2^{\frac{2}{1+\sigma}}S^{\frac{\sigma} {1+\sigma}}\left(\frac{f}{e\sigma}\right)^{\frac{1}{1+\sigma}}\\
  &\leq&m_1f+G+m_2^{\frac{2}{1+\sigma}}S^{\frac{\sigma} {1+\sigma}}\left(\frac{f}{\sigma}\right)^{\frac{1}{1+\sigma}},\quad\mbox{a.e.~on }(0,\mathcal T).
\end{eqnarray*}
Note that the arguments used in the above inequality are for the time when $G(t)>0$; however, for the time when $G(t)=0$, recalling that we understood the term involving $G$ as zero, therefore, the above inequality result holds trivially. Therefore, we obtain
$$
f'\leq m_1f+m_2^{\frac{2}{1+\sigma}}S^{\frac{\sigma} {1+\sigma}}\left(\frac{f}{\sigma}\right)^{\frac{1}{1+\sigma}},
$$
for any $\sigma\in(0,1)$, and for a.e.~$t\in[t_0,t_*)$. Recall that $f(t)>0$, for $t\in(t_0, t_*)$. Dividing both sides of the above inequality by $f^{\frac{1}{1+\sigma}}$, then one can deduce
\begin{eqnarray*}
\left(f^{\frac{\sigma}{1+\sigma}}\right)'&\leq&\frac{\sigma}{1+\sigma} m_1f^{\frac{\sigma}{1+\sigma}}+\frac{\sigma^{\frac{\sigma}{1+\sigma}}} {1+\sigma}m_2^{\frac{2}{1+\sigma}}S^{\frac{\sigma}{1+\sigma}}\\
&\leq&\frac{\sigma}{1+\sigma} m_1f^{\frac{\sigma}{1+\sigma}}+ \sigma^{\frac{\sigma}{1+\sigma}} m_2^{\frac{2}{1+\sigma}}S^{\frac{\sigma}{1+\sigma}},
\end{eqnarray*}
for a.e.~$t\in(t_0,t_*)$. Applying the Gronwall inequality to the above inequality, and recalling that $f(t_0)=0$, it follows from the H\"older inequality that
\begin{eqnarray*}
f^{\frac{\sigma}{1+\sigma}}(t)&\leq& \sigma^{\frac{\sigma}{1+\sigma}}e^{\frac{\sigma}{1+\sigma} \int_{t_0}^tm_1(s)ds} \int_{t_0}^tm_2^{\frac{2}{1+\sigma}}(s)S^{\frac{\sigma}{1+\sigma}}(s)ds\\
&\leq&\sigma^{\frac{\sigma}{1+\sigma}}e^{\frac{\sigma}{1+\sigma} \int_{t_0}^tm_1(s)ds} \left(\int_{t_0}^tm_2^2(s)ds\right)^{\frac{1}{1+\sigma}} \left(\int_0^tS(s)ds\right)^{\frac{\sigma}{1+\sigma}}
\end{eqnarray*}
from which, taking the $\frac{1+\sigma}{\sigma}$-th power to both sides of the above inequality, one obtains
\begin{eqnarray*}
  &&f(t)\leq \sigma e^{\int_{t_0}^tm_1(s)ds} \left(\int_{t_0}^tm_2^2(s)ds\right)^{\frac{1}{\sigma}} \int_0^tS(s)ds ,
\end{eqnarray*}
for any $t\in[t_0, t_*)$, and for any $\sigma\in(0,1)$. Recall that $m_2\in L^2((0,\mathcal T))$, by the absolute continuity of the integrals, there is a positive number $\eta\leq t_*-t_0$, such that $\int_{t_0}^tm_2^2(s)ds\leq1$, for any $t\in[t_0,t_0+\eta)$. Therefore, the above inequality implies
$$
f(t)\leq \sigma e^{\int_{t_0}^tm_1(s)ds}\int_0^tS(s)ds,
$$
for any $t\in[t_0,t_0+\eta)$, and for any $\sigma\in(0,1)$. By taking $\sigma\rightarrow0^+$, this implies that $f\equiv0$, for any $t\in[t_0,t_0+\eta)$, which contradicts the assumption that $f(t)>0$, for any $t\in(t_0, t_*)$. This contradiction implies that there is no such $t_*\in(0,\mathcal T)$ that $f(t_*)>0$, in other words, recalling that $f$ is a nonnegative function, we have $f\equiv0$ on $[0,\mathcal T)$. This completes the proof.
\end{proof}

We also will use the following elementary lemma.

\begin{lemma}
  \label{zeroa.e.}
Let $\Omega\subseteq\mathbb R^d$ be a measurable set of positive measure, and $f$ be a measurable function defined on $\Omega$. Suppose that, for any positive number $\eta$, there is a measurable subset $E_\eta$ of $\Omega$, with $|E_\eta|\leq\eta$, such that $f=0$, a.e.~on $\Omega\setminus E_\eta$. Then, $f=0$, a.e.~on $\Omega$.
\end{lemma}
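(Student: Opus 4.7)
The plan is to apply the hypothesis to the set where $f$ is nonzero and squeeze its measure to $0$. Specifically, I would let $A = \{x \in \Omega : f(x)\neq 0\}$, which is measurable since $f$ is measurable, and aim to show $|A| = 0$, which is exactly the desired conclusion.

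The key step is the following decomposition argument. Fix an arbitrary $\eta>0$ and let $E_\eta$ be the corresponding set provided by the hypothesis. Write
\[
A = (A\cap E_\eta)\cup (A\setminus E_\eta).
\]
The assumption that $f=0$ a.e.~on $\Omega\setminus E_\eta$ says precisely that the set $\{x\in\Omega\setminus E_\eta : f(x)\neq 0\} = A\setminus E_\eta$ has Lebesgue measure zero. Hence, by monotonicity and subadditivity of Lebesgue measure,
\[
|A| \leq |A\cap E_\eta| + |A\setminus E_\eta| \leq |E_\eta| + 0 \leq \eta.
\]
Since $\eta>0$ was arbitrary, we conclude $|A|=0$, i.e.~$f=0$ a.e.~on $\Omega$.

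There is no real obstacle here, this is essentially a one-line measure-theoretic observation; the only point requiring any care is to be explicit that the hypothesis ``$f=0$ a.e.~on $\Omega\setminus E_\eta$'' is equivalent to $|A\setminus E_\eta|=0$, so that the decomposition above genuinely yields the bound $|A|\leq|E_\eta|$, after which letting $\eta\to 0^+$ is immediate.
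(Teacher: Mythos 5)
Your proof is correct and rests on the same key estimate as the paper's: the set where $f\neq 0$ is contained, up to a null set, in $E_\eta$, hence has measure at most $\eta$ for every $\eta>0$. The paper phrases this as a proof by contradiction (taking $\eta=|E|/2$ for a putative set $E$ of positive measure where $f\neq 0$), whereas you argue directly; your version is, if anything, slightly cleaner, since the paper's assertion that $E\subseteq E_\eta$ holds only modulo a null set, a point your decomposition $|A|\leq|A\cap E_\eta|+|A\setminus E_\eta|$ handles explicitly.
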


\begin{proof}
  Suppose, by contradiction, that the conclusion does not hold. Then there is a subset $E$ of $\Omega$, with $0<|E|<\infty$, such that $|f|>0$ on $E$, here $|E|$ denotes the $L^d$-Lebessgue measure of the subset $E$. Then, for $\eta=\frac{|E|}{2}$, by assumption, there is a subset $E_\eta$ of $\Omega$, with $|E_\eta|\leq\eta$, such that $f=0$ on $\Omega\setminus E_\eta$. This implies that $E\subseteq E_\eta$, and thus
  $$
  |E|\leq|E_\eta|\leq \eta=\frac{|E|}{2}.
  $$
  Therefore, $|E|=0$, which contradicts the assumption that $|E|>0$. This contradiction implies the conclusion of the lemma.
\end{proof}

\section{Global existence and uniqueness of the system with positive $\varepsilon$}
\label{globalpositive}
In this section, we will prove the global existence and uniqueness of strong solutions to the Cauchy problem of
system (\ref{eq1})--(\ref{eq5}), for any positive $\varepsilon$. Several
$\varepsilon$-independent
{\it a priori} estimates will also be obtained.

Let's start with the following result on the local existence and uniqueness of strong solutions to the Cauchy problem to system (\ref{eq1})--(\ref{eq5}).

\begin{proposition}\label{loc}
Suppose that (\ref{req}) holds. Then, for any initial data
\begin{eqnarray*}
  &&(u_0,v_0,T_{e,0},q_{e,0})\in H^1(\mathbb R^2),\quad \mbox{with }\nabla\cdot u_0=0,
\end{eqnarray*}
there is a unique local strong solution $(u,v,T_e,q_e)$ to system (\ref{eq1})--(\ref{eq5}), on $\mathbb R^2\times(0,\mathcal T)$, with initial data $(u_0,v_0,T_{e,0},q_{e,0})$,
where the existence time $\mathcal T$
depends on $\alpha$, $Q$, $\varepsilon$ and the initial norm $\|(u_0, v_0, T_{e,0}, q_{e,0})\|_{H^1}$.
\end{proposition}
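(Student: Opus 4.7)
The plan is to construct the local strong solution via a linearization--iteration scheme, derive uniform short-time estimates in the natural energy space, pass to the limit, and close uniqueness via Lemma~\ref{lemuniq}. Starting from $(u^{(0)}, v^{(0)}, T_e^{(0)}, q_e^{(0)}) \equiv (u_0, v_0, T_{e,0}, q_{e,0})$ (constant in time), I would iteratively define $(u^{(n+1)}, v^{(n+1)}, T_e^{(n+1)}, q_e^{(n+1)})$ by solving: (a) a linear 2D Stokes system for $u^{(n+1)}$ and a linear heat equation for $v^{(n+1)}$, with the nonlinear coefficients and source terms frozen at iterate $n$; and (b) linear first-order transport equations for $T_e^{(n+1)}$ and $q_e^{(n+1)}$, with transport velocity $u^{(n)}$ and sources driven by $\nabla\cdot v^{(n)}$, keeping the monotone relaxation term $-\frac{1+\alpha}{\varepsilon}(q_e^{(n+1)})^+$ implicit in the $q_e^{(n+1)}$-equation. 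Each iterate is well-defined by classical 2D Stokes/heat and transport theory, since $x\mapsto x^+$ is globally Lipschitz and monotone.

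For uniform a priori bounds, I would perform $H^1$ energy estimates by testing the velocity equations against the unknowns and against their Laplacians, and by differentiating the transport equations once in space to propagate $H^1$-control for $T_e^{(n+1)}, q_e^{(n+1)}$. Nonlinear transport terms are handled by the Ladyzhenskaya inequality and Young's inequality; the delicate term is $\int \nabla u^{(n)}\cdot(\nabla T_e^{(n+1)}\otimes\nabla T_e^{(n+1)})\,dx$ (and its $q_e$ analogue), whose control demands a bound on $\|\nabla u^{(n)}\|_\infty$ coming from $\|u^{(n)}\|_{H^2}$ via a logarithmic Sobolev inequality, together with the $L^2_tH^2_x$-regularity granted by parabolic theory. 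Closing these estimates by Lemma~\ref{gron} yields uniform bounds for all iterates on a common short interval $[0,\mathcal{T}]$, with $\mathcal{T}$ depending only on $\alpha$, $\bar Q$, $\varepsilon$ and $\|(u_0, v_0, T_{e,0}, q_{e,0})\|_{H^1}$.

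For convergence, I would estimate the successive differences $(\delta u^{(n+1)}, \delta v^{(n+1)}, \delta T_e^{(n+1)}, \delta q_e^{(n+1)})$ in the weaker norm $L^\infty_tL^2_x\cap L^2_tH^1_x$ for the velocities and $L^\infty_tL^2_x$ for the scalars. The monotonicity of $x\mapsto x^+$ guarantees that the relaxation term contributes nonnegatively to the energy inequality for $\delta q_e^{(n+1)}$, so that no factor of $\varepsilon^{-1}$ appears; for $\mathcal{T}$ small enough the resulting estimate is a contraction. The Cauchy limit $(u, v, T_e, q_e)$, combined with the uniform $H^1$ bounds and Aubin--Lions compactness, is then a strong solution in the sense of Definition~\ref{def}, with continuity in time following from standard parabolic/transport regularity. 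For uniqueness, given two strong solutions sharing the same initial data, I would form the $L^2$-energy identity for their difference; the cross-terms $\int \delta u\cdot\nabla T_{e,1}\,\delta T_e\,dx$ and its $q_e$-analogue are estimated by a logarithmic Sobolev / Brezis--Gallouet-type inequality for $\nabla u$ in two dimensions, producing a differential inequality precisely of the form treated by Lemma~\ref{lemuniq}, which then forces the difference to vanish.

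The main obstacle is the complete absence of diffusion in the $T_e$- and $q_e$-equations: closing $H^1$-level a priori estimates for the scalars and the $L^2$-level stability estimate for the difference of two solutions both require sharp control of $\|\nabla u\|_\infty$, which is a borderline quantity in two dimensions. This is exactly the analytic difficulty that the logarithmic term in Lemma~\ref{lemuniq} is designed to absorb, and the rest of the argument is essentially organized around handling this critical coupling.
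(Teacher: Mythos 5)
Your uniqueness argument coincides with the paper's: an $L^2$ energy estimate for the difference of two solutions, monotonicity of $z\mapsto z^+$ to discard the relaxation term, the Brezis--Gallouet--Wainger inequality for $\|\delta u\|_\infty$, and Lemma \ref{lemuniq} to close the resulting logarithmic differential inequality. The existence part, however, replaces the paper's construction (regularize the scalar equations by adding $-\eta\Delta T_e$ and $-\eta\Delta q_e$ as in (\ref{rs}), prove $\eta$-independent bounds, and pass to the limit by Aubin--Lions compactness) with a Picard iteration closed by a contraction in $L^\infty_tL^2_x\cap L^2_tH^1_x$, and this is where there is a genuine gap. In the equation for the successive difference of the scalar iterates a term of the form $\delta u\cdot\nabla T_e^{(n)}$ (and its $q_e$ analogue) is unavoidable; tested against the scalar difference it must be bounded by $\|\delta u\|_p\|\nabla T_e^{(n)}\|_q\|\delta T_e\|_2$ with $\frac1p+\frac1q=\frac12$. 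Since the data are only in $H^1$ and the scalar equations carry no diffusion, the uniform bounds give $\nabla T_e^{(n)}\in L^\infty_tL^2_x$ and nothing better, forcing $p=\infty$, and $\|\delta u\|_\infty$ is not controlled by the contraction norm; integrating by parts does not help because $\nabla\delta T_e$ is likewise uncontrolled. The Brezis--Gallouet--Wainger substitute yields at best a recursion $a_{n+1}\leq C_{\mathcal T}\,a_n\log^{\frac12}(S/a_n)$ for the successive differences; every $a\in\big(0,\,Se^{-1/C_{\mathcal T}^2}\big]$ satisfies $a\leq C_{\mathcal T}\,a\log^{\frac12}(S/a)$, so this is not a contraction and the differences need not tend to zero. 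The logarithmic loss is exactly compatible with an Osgood-type \emph{uniqueness} argument (Lemma \ref{lemuniq}: the unknown starts from zero and the inequality is closed in the difference itself), but it destroys a fixed-point iteration; nor can you fall back on compactness of the iterates, since subsequential limits of $X^{(n_k)}$ and $X^{(n_k+1)}$ need not coincide and the limit cannot then be identified with a solution. This is precisely why the paper regularizes the PDE rather than iterating.

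A second, more local inaccuracy: you propose to control $\|\nabla u^{(n)}\|_\infty$ ``from $\|u^{(n)}\|_{H^2}$ via a logarithmic Sobolev inequality.'' In two dimensions $H^2\not\hookrightarrow W^{1,\infty}$ (this is the borderline failure of $H^1\hookrightarrow L^\infty$ applied to $\nabla u$), and the logarithmic inequality places an $H^3$-type quantity inside the logarithm, which you do not have. The paper instead obtains $\nabla u\in L^1(0,\mathcal T;L^\infty)$ in Proposition \ref{h1v} by splitting $u=\bar u+\hat u$, applying maximal $L^3(0,\mathcal T;W^{2,3})$ regularity to the forced Stokes part together with the interpolation $\|\nabla\bar u\|_\infty\leq C\|\nabla\bar u\|_2^{1/4}\|\Delta\bar u\|_3^{3/4}$, and a weighted-in-time energy estimate for the free heat part. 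Some device of this kind is unavoidable in closing the $H^1$ estimates for the undiffused scalars and should be built explicitly into your a priori bounds for the iterates.
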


\begin{proof}
\textbf{(i) The existence. }The existence of strong solutions to system (\ref{eq1})--(\ref{eq5}), with initial data $(u_0, v_0, T_{e,0}, q_{e,0})$ can be proven by the standard regularization argument as follows: (i) adding the diffusivity terms $-\eta\Delta T_e$ and $-\eta \Delta q_e$ to the left-hand sides of equations (\ref{eq4}) and (\ref{eq5}), respectively, in other words, we consider the following regularized system
\begin{equation}\label{rs}
\left\{
\begin{array}{l}
  \partial_tu+(u\cdot\nabla)u-\Delta u+\nabla p+\nabla\cdot(v\otimes v)=0,\\
  \nabla\cdot u=0,\\
  \partial_tv+(u\cdot\nabla)v-\Delta v+(v\cdot\nabla)u=\frac{1}{1+\alpha}\nabla(T_e-q_e),\\
  \partial_tT_e+u\cdot\nabla T_e-(1-\bar Q)\nabla\cdot v-\eta\Delta T_e=0,\\
  \partial_tq_e+u\cdot\nabla q_e+(\bar Q+\alpha)\nabla\cdot v-\eta \Delta q_e =-\frac{1+\alpha}{\varepsilon} q_e^+;
  \end{array}
  \right.
\end{equation}
(ii) for each $\eta>0$, the Cauchy problem of the regularized system (\ref{rs}), with initial data $(u_0, v_0, T_{e,0}, q_{e,0})$, has a unique short time strong solution $(u^{(\eta)}, v^{(\eta)}, T_e^{(\eta)}, q_e^{(\eta)})$, which satisfies some $\eta$-independent {\it a priori} estimates, on some $\eta$-independent time interval $(0,\mathcal T)$, for a positive time $\mathcal T$ depending only on on $\alpha$, $Q$, $\varepsilon$ and the initial norm $\|(u_0, v_0, T_{e,0}, q_{e,0})\|_{H^1}$; (iii) thanks to these $\eta$-independent estimates, by adopting the Cantor diagonal argument,
one can apply the Aubin-Lions lemma and take the limit $\eta\rightarrow0^+$ to show the local existence of strong solutions to the Cauchy problem of system (\ref{eq1})--(\ref{eq5}), with initial data $(u_0,v_0,T_{e,0},q_{e,0})$. Since the proof is standard, we omit it here; however, the key part of the proof, i.e., the relevant {\it a priori} estimates, are essentially contained in the ''formal" proofs of Propositions \ref{basicest}--\ref{h1v}, below. As it was mentioned above, these formal estimates can be rigorously justified by establishing them first, to be $\eta-$independent,  for the regularized system (\ref{rs}) and then passing with the limit as $\eta \to 0^+$.

\textbf{(ii) The uniqueness. }Let $(u, v, T_e, q_e)$ and $(\tilde u, \tilde v, \tilde T_e, \tilde q_e)$ be two strong solutions to system (\ref{eq1})--(\ref{eq5}), with the same initial data $(u_0, v_0, T_{e,0}, q_{e, 0})$, on the time interval $(0,\mathcal T)$. Define the new functions
$$
(\delta u, \delta v, \delta T_e,\delta q_e)=(u, v, T_e, q_e)-(\tilde u, \tilde v, \tilde T_e, \tilde q_e).
$$
Then, one can easily check that
\begin{eqnarray}
&\partial_t\delta u+(u\cdot\nabla)\delta u +(\delta u\cdot\nabla)\tilde u-\Delta\delta u+\nabla\delta p+\nabla\cdot(v\otimes\delta v+\delta v\otimes\tilde v)=0,\label{deq1}\\
&\nabla\cdot\delta u =0,\label{deq2}\\
&\partial_t\delta v+(u\cdot\nabla)\delta v+(\delta u\cdot\nabla)\tilde v-\Delta\delta v+(v\cdot\nabla)\delta u\nonumber\\
&+(\delta v\cdot\nabla)\tilde u=\frac{1}{1+\alpha}\nabla(\delta T_e-\delta q_e), \label{deq3}\\
&\partial_t\delta T_e+u\cdot\nabla\delta T_e +\delta u\cdot\nabla\tilde T_e-(1-\bar Q)\nabla\cdot\delta v=0,\label{deq4}\\
&\partial_t\delta q_e+u\cdot\nabla\delta q_e +\delta u\cdot\nabla\tilde q_e+(\bar Q+\alpha)\nabla\cdot\delta v=-\frac{1+\alpha}{\varepsilon}(q_e^+-\tilde q_e^+).\label{deq5'}
\end{eqnarray}

Since equations (\ref{deq1})--(\ref{deq4}) hold in $L^2(0,\mathcal T; L^2(\mathbb R^2))$, we multiply equations (\ref{deq1}), (\ref{deq3}) and (\ref{deq4}) by $\delta u$, $\delta v$ and $\delta T_e$, respectively, and integrating over $\mathbb R^2$, then it follows from integration by parts that
\begin{align*}
  &\frac12\frac{d}{dt}(\|\delta u\|_2^2+\|\delta v\|_2^2+\|\delta T_e\|_2^2)+\|\nabla\delta u\|_2^2+\|\nabla\delta v\|_2^2\\
  =&-\int_{\mathbb R^2}[(\delta u\cdot\nabla)\tilde u+\nabla\cdot(v\otimes\delta v+\delta v\otimes\tilde v)]\cdot\delta udxdy\\
  &-\int_{\mathbb R^2}\left\{[(\delta u\cdot\nabla)\tilde v+(v\cdot\nabla)\delta u+(\delta u\cdot\nabla)\tilde u]\cdot\delta v+\frac{\delta T_e-\delta q_e}{1+\alpha}\nabla\cdot\delta v\right\} dxdy\\
  &-\int_{\mathbb R^2}[\delta u\cdot\nabla\tilde T_e\delta T_e-(1-\bar Q) \nabla\cdot\delta v]\delta T_edxdy=:I.
  \end{align*}
  By the Young inequality, we deduce
  \begin{align*}
  I\leq&\int_{\mathbb R^2}[|\delta u||\nabla\tilde u|+(|v|+|\tilde v|)|\nabla\delta v|+(|\nabla v|+|\nabla\tilde v|)|\delta v|]|\delta u|dxdy\\
  &+\int_{\mathbb R^2}\left\{[|\delta u|(|\nabla\tilde v|+|\nabla\tilde u|)+|v||\nabla\delta u|]|\delta v|+\frac{|\nabla\delta v|}{1+\alpha}(|\delta T_e|+|\delta q_e|) \right\}dxdy\\
  &+\int_{\mathbb R^2}[|\delta u||\nabla\tilde T_e||\delta T_e|+(1-\bar Q)|\nabla\delta v||\delta T_e|]dxdy\\
  \leq&\frac12\int_{\mathbb R^2}(|\nabla\delta u|^2+|\nabla\delta v|^2)dxdy+C\int_{\mathbb R^2}[(|\nabla\tilde u|+|\nabla\tilde v|+|\nabla v|+|v|^2\\
  &+|\tilde v|^2)(|\delta u|^2+|\delta v|^2)+|\delta T_e|^2+|\delta q_e|^2+|\nabla\tilde T_e||\delta u||\delta T_e|]dxdy,
\end{align*}
and thus
\begin{align}
  &\frac{d}{dt}\|(\delta u,\delta v,\delta T_e)\|_2^2+\|\nabla\delta u\|_2^2+\|\nabla\delta v\|_2^2\nonumber\\
  \leq&C\int_{\mathbb R^2}[(|\nabla\tilde u|+|\nabla\tilde v|+|\nabla v|+|v|^2+|\tilde v|^2)(|\delta u|^2+|\delta v|^2)\nonumber\\
  &+|\delta T_e|^2+|\delta q_e|^2+|\nabla\tilde T_e||\delta u||\delta T_e|]dxdy.\label{uni1}
\end{align}

Multiplying equation (\ref{deq5'}) by $\delta q_e$, integrating the resultant over $\mathbb R^2$, then it follows from integration by parts and the Young inequality that
\begin{align*}
  &\frac12\frac{d}{dt}\|\delta q_e\|_2^2+\frac{1+\alpha}{\varepsilon}\int_{\mathbb R^2}(q_e^+-\tilde q_e^+)(q_e-\tilde q_e)dxdy\\
  =&-\int_{\mathbb R^2}(\delta u\cdot\nabla\tilde q_e+(\alpha+\bar Q)\nabla\cdot\delta v)\delta q_e dxdy\\
  \leq&\frac14\|\nabla\delta v\|_2^2+C\int_{\mathbb R^2}(|\delta q_e|^2+|\nabla\tilde q_e||\delta u||\delta q_e|)dxdy,
\end{align*}
from which, noticing that the function $z^+$ is nondecreasing in $z$, thus $(q_e^+-\tilde q_e^+)(q_e-\tilde q_e)\geq0$, and one obtains
$$
\frac{d}{dt}\|\delta q_e\|_2^2\leq \frac12\|\nabla\delta v\|_2^2+C\int_{\mathbb R^2}(|\delta q_e|^2+|\nabla\tilde q_e||\delta u||\delta q_e|)dxdy.
$$

Summing the above inequality with (\ref{uni1}) yields
\begin{eqnarray}
  &&\frac{d}{dt}\|(\delta u,\delta v,\delta T_e,\delta q_e)\|_2^2+\frac12(\|\nabla\delta u\|_2^2+\|\nabla\delta v\|_2^2)\nonumber\\
  &\leq&C\int_{\mathbb R^2}[(|\nabla\tilde u|+|\nabla\tilde v|+|\nabla v|+|v|^2+|\tilde v|^2)(|\delta u|^2+|\delta v|^2)\nonumber\\
  &&+|\delta T_e|^2+|\delta q_e|^2+|\nabla\tilde T_e||\delta u||\delta T_e|+|\nabla\tilde q_e||\delta u||\delta q_e|]dxdy,\label{add}
\end{eqnarray}
from which, by the H\"older, Ladyzhenskay and Young inequalities, we deduce
\begin{eqnarray*}
  &&\frac{d}{dt}\|(\delta u,\delta v,\delta T_e,\delta q_e)\|_2^2+\frac12(\|\nabla\delta u\|_2^2+\|\nabla\delta v\|_2^2)\nonumber\\
  &\leq&C(\|(\nabla\tilde u, \nabla\tilde v,\nabla v)\|_2+\|(v,\tilde v)\|_4^2)\|(\delta u,\delta v)\|_4^2\nonumber\\
  &&+C\|(\delta T_e, \delta q_e)\|_2^2+C\|(\nabla\tilde T_e,\nabla\tilde q_e)\|_2\|\delta u\|_\infty\|(\delta T_e, \delta q_e)\|_2\nonumber\\
  &\leq&C(\|(\nabla\tilde u, \nabla\tilde v,\nabla v)\|_2+\|(v,\tilde v)\|_4^2)\|(\delta u,\delta v)\|_2  \|(\nabla\delta u,\nabla\delta v)\|_2\nonumber\\
  &&+C\|(\delta T_e, \delta q_e)\|_2^2+C\|(\nabla\tilde T_e,\nabla\tilde q_e)\|_2\|\delta u\|_\infty\|(\delta T_e, \delta q_e)\|_2\nonumber\\
  &\leq&\frac14\|(\nabla\delta u,\nabla\delta v)\|_2^2 +C\left(\|(\nabla\tilde u, \nabla\tilde v,\nabla v)\|_2^2+\|(\tilde u, \tilde v)\|_4^4\right)\|(\delta u,\delta v)\|_2^2\nonumber\\
  &&+C\|(\delta T_e, \delta q_e)\|_2^2+C\|(\nabla\tilde T_e,\nabla\tilde q_e)\|_2\|(\delta u,\delta v)\|_\infty\|(\delta T_e, \delta q_e)\|_2.
\end{eqnarray*}
Therefore, one has
\begin{eqnarray}
&&\frac{d}{dt}\|(\delta u,\delta v,\delta T_e,\delta q_e)\|_2^2+\frac14\|(\delta u,\delta v)\|_{H^1}^2\nonumber\\
&\leq& C\left(1+\|(\tilde u, \tilde v)\|_4^4+\|(\nabla\tilde u, \nabla\tilde v,\nabla v)\|_2^2\right)\|(\delta u,\delta v,\delta T_e, \delta q_e)\|_2^2\nonumber\\
&&+C\|(\nabla\tilde T_e,\nabla\tilde q_e)\|_2\|(\delta u,\delta v)\|_\infty\|(\delta T_e, \delta q_e)\|_2.\label{LINEW6.2}
\end{eqnarray}

Recalling the following Brezis--Gallouet--Wainger inequality (see \cite{Brezis_Gallouet_1980,Brezis_Wainger_1980})
$$
\|f\|_{L^\infty(\mathbb R^2)}\leq C\|f\|_{H^1(\mathbb R^2)}\log^{\frac12}\left(\frac{\|f\|_{H^2(\mathbb R^2)}}{\|f\|_{H^1(\mathbb R^2)}}+e\right),
$$
and denoting $U=(u,v), \tilde U=(\tilde u, \tilde v)$ and $\delta U=(\delta u, \delta v)$, we have
\begin{eqnarray}
\|\delta U\|_\infty&\leq& C\|\delta U\|_{H^1}\log^{\frac12}\left(\frac{\|\delta U\|_{H^2}}{\|\delta U\|_{H^1}}+e\right)
\leq C\|\delta U\|_{H^1}\log^{\frac12}\left(\frac{S(t)}{\|\delta U\|_{H^1}}\right)\nonumber\\
&=&C\left[\|\delta U\|_{H^1}^2\log^+\left(\frac{S(t)}{\|\delta U\|_{H^1}}\right)\right]^{\frac12}, \label{LINEW6.2-1}
\end{eqnarray}
where
$$
S(t)=\|U\|_{H^2}+\|\tilde U\|_{H^2}+e(\|U\|_{H^1}+\|\tilde U\|_{H^1}).
$$
Note that, when $\delta U\equiv 0$, (\ref{LINEW6.2-1}) still holds, as long as we understand the quantity on the right-hand side as zero, in the natural way as in Lemma \ref{lemuniq}.

Denoting
\begin{eqnarray*}
  &&f=\|(\delta u,\delta v,\delta T_e,\delta q_e)\|_2^2,\quad G=\frac14\|(\delta u,\delta v)\|_{H^1}^2,\\
  &&m_1=C\left(1+\|(\tilde u, \tilde v)\|_4^4+\|(\nabla\tilde u, \nabla\tilde v,\nabla v)\|_2^2\right),\quad m_2=C\|(\nabla\tilde T_e,\nabla\tilde q_e)\|_2,
\end{eqnarray*}
then it follows from (\ref{LINEW6.2}) and (\ref{LINEW6.2-1}) that
$$
f'+G\leq m_1f+m_2\left[fG\log^+\left(\frac{S/4}{G}\right)\right]^{\frac12}.
$$
Here, at the time when $G(t)=0$, the term involving $G(t)$ on the right-hand side of the above inequality is understood as zero, as it was in Lemma \ref{lemuniq}.
Recalling the regularities of $(u,v,T_e,q_e)$ and $(\tilde u, \tilde v, \tilde T_e, \tilde q_e)$, one can easily check, thanks to the Ladyzhanskaya inequality,  that $m_1,S\in L^1((0,\mathcal T))$ and $m_2\in L^2((0,\mathcal T))$. Therefore, we can apply Lemma \ref{lemuniq} to conclude that $f\equiv0$, which proves the uniqueness.
\end{proof}

For the rest of this section, we always suppose that
$(u,v,T_e,q_e)$ is the unique strong solution to system (\ref{eq1})--(\ref{eq5}), on $\mathbb R^2\times(0,\mathcal T)$, for some positive time $\mathcal T$, with initial data $(u_0, v_0, T_{e,0},q_{e,0})$. We are going to establish several $\varepsilon$-independent {\it a priori} estimates on $(u, v, T_e, q_e)$.
Before performing these {\it a priori} estimates, we point out, again, that the
arguments being used in the proofs of Propositions
\ref{l4est}--\ref{h1v}, below, are somewhat formal, because
$(u,v,T_e,q_e)$ may not have the required smoothness for justifying
the arguments. However, one can follow the same arguments presented in the proofs of  Propositions
\ref{l4est}--\ref{h1v} to
establish the same {\it a priori} estimates to the regularized system
(\ref{rs}), for which the solutions fulfill the required smoothness,
and then take the limit $\eta\rightarrow0^+$, recalling the weakly
lower semi-continuity of the relevant norms, to obtain the desired {\it a priori} estimates on $(u,v,T_e,q_e)$.

Let's start with the basic energy equality stated in the following proposition. We observe that here we have energy equality, instead of inequality, as in the case of strong solutions of the Navier-Stokes equations. Observe, however, that for the rest of the proof of the main result it is sufficient to have energy inequality. 

\begin{proposition}
  \label{basicest}
  We have the following estimate
  \begin{align*}
\frac12\frac{d}{dt}\bigg(\|u\|_2^2+\|v\|_2^2&+\frac{\|T_e\|_2^2}{(1+\alpha) (1-\bar Q)}+\frac{\|q_e\|_2^2}{(1+\alpha)(\bar Q+\alpha)}\bigg)\\
    &+\|\nabla u\|_2^2+\|\nabla v\|_2^2+\frac{\|q_e^+\|_2^2}{\varepsilon(\bar Q+\alpha)}=0,
  \end{align*}
  for any $t\in(0,\mathcal T)$.
\end{proposition}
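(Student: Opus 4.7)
The plan is to perform the standard $L^2$ energy estimate on the four dynamical equations (\ref{eq1}), (\ref{eq3}), (\ref{eq4}), (\ref{eq5}), and to sum them with carefully chosen positive weights so that all coupling terms cancel exactly. Concretely, I would test (\ref{eq1}) against $u$, (\ref{eq3}) against $v$, (\ref{eq4}) against $\frac{T_e}{(1+\alpha)(1-\bar Q)}$, and (\ref{eq5}) against $\frac{q_e}{(1+\alpha)(\bar Q+\alpha)}$, and integrate over $\mathbb{R}^2$. The hypothesis (\ref{req}) guarantees that these weights are well-defined and positive, which is the whole reason for rewriting the original moisture-temperature system in terms of $(T_e,q_e)$.

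The routine pieces work as follows. Using $\nabla\cdot u=0$, the transport terms $\int(u\cdot\nabla)\phi\cdot\phi\,dxdy$ vanish for $\phi\in\{u,v,T_e,q_e\}$, and the pressure term $\int\nabla p\cdot u\,dxdy$ vanishes. Integration by parts turns the viscous terms into $\|\nabla u\|_2^2$ and $\|\nabla v\|_2^2$. The quadratic cross term between the $u$- and $v$-equations cancels:
\[
\int_{\mathbb{R}^2}\nabla\cdot(v\otimes v)\cdot u\,dxdy+\int_{\mathbb{R}^2}[(v\cdot\nabla)u]\cdot v\,dxdy=0,
\]
which is seen by integrating the first term by parts and comparing indices. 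For the baroclinic-thermodynamic coupling, the $v$-test produces $\frac{1}{1+\alpha}\int(T_e-q_e)\nabla\cdot v\,dxdy$ after integration by parts, while the $T_e$- and $q_e$-tests produce $-\frac{1}{1+\alpha}\int T_e\nabla\cdot v\,dxdy$ and $+\frac{1}{1+\alpha}\int q_e\nabla\cdot v\,dxdy$ respectively. The three contributions sum to zero precisely because of the normalization by $(1-\bar Q)$ and $(\bar Q+\alpha)$; this is the decisive algebraic step.

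The only non-routine term is the relaxation contribution $\frac{1}{\varepsilon(\bar Q+\alpha)}\int_{\mathbb{R}^2}q_e\,q_e^+\,dxdy$. Here I would use the pointwise identity $z\cdot z^+=(z^+)^2$ to rewrite it as $\frac{\|q_e^+\|_2^2}{\varepsilon(\bar Q+\alpha)}$; note this is nonnegative, which is crucial for the proof to yield an \emph{equality} with the correct sign on the left. Summing all four weighted identities yields the claimed relation.

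The main subtlety, as the authors flag before the proposition, is rigor: a strong solution in the sense of the definition need not have enough regularity to justify the integrations by parts and the pairings $\langle\partial_tq_e,q_e\rangle$ etc. directly. The clean way to handle this is to carry out the above calculation first for the smooth solutions $(u^{(\eta)},v^{(\eta)},T_e^{(\eta)},q_e^{(\eta)})$ of the parabolic regularization (\ref{rs}), which gives the analogous equality with two extra nonnegative terms $\eta\|\nabla T_e^{(\eta)}\|_2^2+\eta\|\nabla q_e^{(\eta)}\|_2^2$, and then to pass to the limit $\eta\to 0^+$. The weights are $\eta$-independent, so nothing about the cancellation is affected, and the weak lower semicontinuity of the $L^2$ norms of $\nabla u,\nabla v,q_e^+$ recovers the stated equality (indeed, one obtains equality rather than merely inequality because the $\eta$-terms vanish in the limit in the energy identity, and the remaining identities pass cleanly to the limit). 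No Gronwall argument is needed at this stage.
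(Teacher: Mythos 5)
Your proposal is correct and follows essentially the same route as the paper: testing (\ref{eq1}), (\ref{eq3}), (\ref{eq4}), (\ref{eq5}) against $u$, $v$, $\frac{T_e}{(1+\alpha)(1-\bar Q)}$, $\frac{q_e}{(1+\alpha)(\bar Q+\alpha)}$, cancelling the convective, pressure, barotropic--baroclinic and thermodynamic coupling terms, and using $q_e\,q_e^+=(q_e^+)^2$ for the relaxation term, exactly as in the paper's equations (\ref{be1})--(\ref{be3}). Your additional remark on justifying the formal computation via the regularized system (\ref{rs}) matches the authors' own stated strategy (and, as they note, an energy inequality would in any case suffice for the sequel).
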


\begin{proof}
  Multiplying equations (\ref{eq1}) and (\ref{eq3}) by $u$ and $v$, respectively, summing the resultants up and integrating over $\mathbb R^2$, then it follows from integration by parts that
  \begin{equation}
    \frac12\frac{d}{dt}(\|u\|_2^2+\|v\|_2^2)+\|\nabla u\|_2^2+\|\nabla v\|_2^2
    =\frac{1}{1+\alpha}\int_{\mathbb R^2}(q_e-T_e)\nabla\cdot vdxdy,\label{be1}
  \end{equation}
  where we have used the following fact
  $$
  \int_{\mathbb R^2}[\nabla\cdot(v\otimes v)\cdot u+(v\cdot\nabla )u\cdot v]dxdy
  =\int_{\mathbb R^2}[(v\cdot\nabla)u\cdot v-(v\otimes v):\nabla u]dxdy=0.
  $$
  Multiplying equation (\ref{eq4}) by $(1+\alpha)^{-1}(1-\bar Q)^{-1}T_e$, and integrating over $\mathbb R^2$, then it follows from integration by parts that
  \begin{equation}
    \label{be2}
    \frac{1}{2(1+\alpha)(1-\bar Q)}\frac{d}{dt}\|T_e\|_2^2-\frac{1}{1+\alpha}\int_{\mathbb R^2}T_e \nabla\cdot vdxdy=0.
  \end{equation}
  Multiplying equation (\ref{eq5}) by $(1+\alpha)^{-1}(\bar Q+\alpha)^{-1}q_e$, and integrating over $\mathbb R^2$, then it follows from integration by parts that
  \begin{equation}
    \label{be3}
    \frac{1}{2(1+\alpha)(\bar Q+\alpha)}\frac{d}{dt}\|q_e\|_2^2+\frac{1}{1+\alpha} \int_{\mathbb R^2}q_e\nabla\cdot v dxdy=-\frac{1}{\varepsilon(\bar Q+\alpha)}\int_{\mathbb R^2}|q_e^+|^2dxdy.
  \end{equation}
  Summing (\ref{be1})--(\ref{be3}) up yields the conclusion.
\end{proof}

As an intermediate step to obtain the $L^\infty(0,\mathcal T; H^1(\mathbb R^2))$ estimate for $(u,v,T_e,q_e)$, we prove the $L^\infty(0,\mathcal T; L^4(\mathbb R^2))$ estimate in the next proposition.

\begin{proposition}
  \label{l4est}
  Denote $U=(u,v)$. Then, we have the estimate
  \begin{align*}
    \sup_{0\leq t<\mathcal T}\|(U,T_e,q_e)\|_4^4+\int_0^\mathcal T\left(\big\||U|\nabla U\big\|_2^2+\|\nabla v\|_4^4\right)dt\leq C,
  \end{align*}
  for a positive constant $C$ depending only on the parameters $\alpha, \bar Q,\mathcal T$ and the initial norm $\|(U_0,T_{e,0},q_{e,0})\|_{ L^2(\mathbb R^2)\cap L^4(\mathbb R^2)}$, and in particular, $C$ is independent of $\varepsilon$.
\end{proposition}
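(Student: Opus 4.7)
The plan is to test the four equations simultaneously by $|U|^2 u$, $|U|^2 v$, $T_e^3$ and $q_e^3$ (with $U=(u,v)$), sum, and close the resulting differential inequality by combining Ladyzhenskaya's inequality, Gronwall (Lemma~\ref{gron}), and the parabolic regularity estimate for $v$ proved in the appendix. After integration by parts, the incompressibility $\nabla\cdot u=0$ annihilates all quadratic transport terms (for instance $\int(u\cdot\nabla)u\cdot|U|^2 u+\int(u\cdot\nabla)v\cdot|U|^2 v=\tfrac14\int u\cdot\nabla|U|^4=0$, and similarly in the scalar equations); the viscous contributions produce the clean dissipation $\||U|\nabla U\|_2^2+\tfrac12\|\nabla|U|^2\|_2^2$; the relaxation gives the positive $\tfrac{1+\alpha}{\varepsilon}\|q_e^+\|_4^4$; and the structural cancellation between $\nabla\cdot(v\otimes v)$ (from (\ref{eq1})) and $(v\cdot\nabla)u$ (from (\ref{eq3})) reduces their sum to the single remainder $-\int v_iv_j u_i\partial_j|U|^2$. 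The surviving terms to be controlled are the pressure $-\int p\,u\cdot\nabla|U|^2$, the baroclinic source $-\tfrac{1}{1+\alpha}\int(T_e-q_e)\nabla\cdot(|U|^2 v)$, and the scalar-$v$ couplings $(1-\bar Q)\int T_e^3\nabla\cdot v$ and $-(\bar Q+\alpha)\int q_e^3\nabla\cdot v$.

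To estimate these residuals I would combine the Calderon--Zygmund bound $\|p\|_4\leq C\|U\|_8^2$ (from $-\Delta p=\partial_i\partial_j(u_iu_j+v_iv_j)$) with Ladyzhenskaya applied to $|U|^2$, which yields $\|U\|_8^8\leq C\|U\|_4^4\|\nabla|U|^2\|_2^2$, hence $\|U\|_8^2\leq C\|U\|_4\|\nabla|U|^2\|_2^{1/2}$. Together they give
\[
\left|\int p\,u\cdot\nabla|U|^2\right|+\left|\int v_iv_j u_i\partial_j|U|^2\right|\leq C\|U\|_4^2\|\nabla|U|^2\|_2^{3/2}\leq \epsilon\|\nabla|U|^2\|_2^2+C\|U\|_4^8.
\]
Splitting $\nabla\cdot(|U|^2v)=|U|^2\nabla\cdot v+v\cdot\nabla|U|^2$ and applying Young's inequality controls the baroclinic source by $\epsilon(\||U|\nabla U\|_2^2+\|\nabla|U|^2\|_2^2)+C(\|T_e\|_4^4+\|q_e\|_4^4+\|U\|_4^4)$. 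For the scalar-$v$ couplings, H\"older and Young give $|\int T_e^3\nabla\cdot v|+|\int q_e^3\nabla\cdot v|\leq \tilde\epsilon\|\nabla v\|_4^4+C(\|T_e\|_4^4+\|q_e\|_4^4)$.

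Choosing $\epsilon$ sufficiently small and summing produces a differential inequality of the schematic form
\[
\tfrac{d}{dt}\|(U,T_e,q_e)\|_4^4+c\,\||U|\nabla U\|_2^2+c\,\|\nabla|U|^2\|_2^2+\tfrac{c}{\varepsilon}\|q_e^+\|_4^4\leq C\|U\|_4^8+C\|(U,T_e,q_e)\|_4^4+\tilde\epsilon\|\nabla v\|_4^4.
\]
The Riccati-type $\|U\|_4^8$ is neutralised via Ladyzhenskaya as $\|U\|_4^8\leq C(\|U\|_2^2\|\nabla U\|_2^2)\|U\|_4^4$, where the weight $\|U\|_2^2\|\nabla U\|_2^2$ is integrable in $t$ by the basic energy identity (Proposition~\ref{basicest}). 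The residual $\tilde\epsilon\|\nabla v\|_4^4$ I would absorb using the parabolic regularity estimates of the appendix, applied to the $v$-equation in the form $\partial_t v-\Delta v=-(u\cdot\nabla)v-(v\cdot\nabla)u+\tfrac{1}{1+\alpha}\nabla(T_e-q_e)$: the divergence-free transport terms and the \emph{gradient} structure of the $T_e-q_e$ source make maximal $L^4_tL^4_x$-regularity available, yielding a bound on $\int_0^{\mathcal T}\|\nabla v\|_4^4\,dt$ in terms of quantities already under control (the basic energy together with $\|(T_e,q_e)\|_{L^\infty L^4}$), so that after integrating in time the $\tilde\epsilon$-term is absorbed into the LHS. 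A final application of the Gronwall-type Lemma~\ref{gron} then closes the estimate.

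The main obstacle is precisely the appearance of $\|\nabla v\|_4^4$ in the dissipation: it does not follow from the basic energy, and closing the coupling $\int T_e^3\nabla\cdot v$ requires something like $\nabla v\in L^4_tL^4_x$. Exploiting the gradient form of the $T_e-q_e$ forcing together with the parabolic smoothing of the heat semigroup, encoded in the appendix estimates, is exactly what makes this gain possible without invoking any $H^1$-estimate on $v$ at this stage.
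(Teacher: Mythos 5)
Your overall strategy coincides with the paper's (same multipliers $|U|^2u$, $|U|^2v$ and powers of $T_e,q_e$; the Calder\'on--Zygmund bound for $p$; Ladyzhenskaya applied to $|U|^2$; the appendix parabolic estimate for $\nabla v$ in $L^4_{t,x}$; the nonlinear Gronwall lemma), and your cancellation and pressure/Riccati estimates are correct. However, there is a genuine gap in how you close the scalar--velocity coupling. You bound $|\int T_e^3\nabla\cdot v\,dxdy|\le\tilde\epsilon\|\nabla v\|_4^4+C(\tilde\epsilon)\|T_e\|_4^4$ and claim that, after integrating in time, $\tilde\epsilon\int_0^{\mathcal T}\|\nabla v\|_4^4\,dt$ is absorbed because $\int_0^{\mathcal T}\|\nabla v\|_4^4\,dt$ is controlled by ``the basic energy together with $\|(T_e,q_e)\|_{L^\infty L^4}$''. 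This is not correct: applying Lemma~\ref{lemapp2} to (\ref{eq3}) with $f=-(u\cdot\nabla)v-(v\cdot\nabla)u$ and $g=\frac{1}{1+\alpha}(T_e-q_e)$ gives
$$
\int_0^t\|\nabla v\|_4^4\,ds\le C\Big[\|\nabla v_0\|_2^4+\Big(\int_0^t\big\||U|\nabla U\big\|_2^2\,ds\Big)^2+\int_0^t\|(T_e,q_e)\|_4^4\,ds\Big],
$$
so the bound is \emph{quadratic} in $X(t):=\int_0^t\||U|\nabla U\|_2^2\,ds$, which is precisely the dissipation you are trying to control; the basic energy alone only yields $|U|\nabla U\in L^{4/3}_{t,x}$, which is insufficient for the needed maximal regularity. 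Consequently $\tilde\epsilon\,C\,X(t)^2$ cannot be absorbed into the linear term $c\,X(t)$ on the left by taking $\tilde\epsilon$ small, and a bootstrap does not rescue the argument as stated, since the admissible $\tilde\epsilon$ would have to depend on the final bound while the constant $C(\tilde\epsilon)$ multiplying $\int\|(T_e,q_e)\|_4^4$ enters the Gronwall exponent and blows up as $\tilde\epsilon\to0$.

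The paper avoids this with a small but essential device: for the scalars it derives the differential inequalities for $\|T_e\|_4^2$ and $\|q_e\|_4^2$ (not the fourth powers), namely $\frac{d}{dt}\|T_e\|_4^2\le 2(1-\bar Q)\|\nabla v\|_4\|T_e\|_4$, so the coupling appears only to \emph{first} order in $\|\nabla v\|_4$. After time integration and H\"older one then meets $\big(\int_0^t\|\nabla v\|_4^4ds\big)^{1/4}\lesssim \|\nabla v_0\|_2+X(t)^{1/2}+\big(\int_0^t\|(T_e,q_e)\|_4^4ds\big)^{1/4}$, which is sublinear in $X$ and is absorbed by Young's inequality; the resulting integral inequality, containing a $\big(\int_0^tf^2ds\big)^{1/2}$ term, is exactly what Lemma~\ref{gron} is designed to handle. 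If you replace your pointwise Young step for $\int T_e^3\nabla\cdot v$ by this square-root bookkeeping, the rest of your argument (the cancellations, the pressure estimate, and the integrability of the weight $\|U\|_2^2\|\nabla U\|_2^2$ from Proposition~\ref{basicest}) goes through as written.
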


\begin{proof}
  Multiplying equations (\ref{eq1}) and (\ref{eq3}) by $|U|^2u$ and $|U|^2v$, respectively, summing the resultants up and integrating over $\mathbb R^2$, then it follows from integration by parts and the H\"older inequality
  that
  \begin{align}
    &\frac14\frac{d}{dt}\|U\|_4^4+\int_{\mathbb R^2}\left(|U|^2|\nabla U|^2+\frac12|\nabla(|U|^2)|^2\right)dxdy\nonumber\\
    =&\int_{\mathbb R^2}[p\nabla\cdot(|U|^2u)-(\nabla\cdot(v\otimes v))\cdot|U|^2 u\nonumber\\
    &\qquad-(v\cdot\nabla)u\cdot|U|^2v+\frac{q_e-T_e}{1+\alpha}\nabla\cdot(|U|^2v)]dxdy \nonumber\\
    \leq&3\int_{\mathbb R^2}[|p||U|^2|\nabla U|+|U|^4|\nabla U|+(|q_e|+|T_e|)|U|^2|\nabla U|]dxdy\nonumber\\
    \leq&3\big(\|p\|_4+\big\||U|^2\big\|_4+\|T_e\|_4+\|q_e\|_4 \big)\|U\|_4\big\||U|\nabla U\big\|_2. \label{l41}
  \end{align}
  Applying the divergence operator to equation (\ref{eq1}), in view of (\ref{eq2}), one can see that
  $$
  -\Delta p=\nabla\cdot\nabla\cdot(u\otimes u+v\otimes v).
  $$
  Note that $p$ is uniquely determined by the above elliptic equation by assuming that $p\rightarrow0$, as $(x,y)\rightarrow\infty$. Thus, by the elliptic estimates, one has
  $$
  \|p\|_4\leq C\|u\otimes u+v\otimes v\|_4\leq C\big\||U|^2\big\|_4.
  $$
  Substituting this estimate into (\ref{l41}), and using the Ladyzhenskaya and Young inequalities, one deduces
  \begin{align*}
    &\frac14\frac{d}{dt}\|U\|_4^4+\big\||U|\nabla U\big\|_2^2+\frac12\big\|\nabla|U|^2\big\|_2^2\\
    \leq&C\big(\big\||U|^2\big\|_4+\|T_e\|_4+\|q_e\|_4 \big)\|U\|_4\big\||U|\nabla U\big\|_2\\
    \leq&C\big(\big\||U|^2\big\|_2^{\frac12}\big\|\nabla|U|^2\big\|_2^{\frac12} +\|T_e\|_4+\|q_e\|_4 \big)\|U\|_4\big\||U|\nabla U\big\|_2\\
    \leq&\frac12\big(\big\||U|\nabla U\big\|_2^2+\big\|\nabla|U|^2\big\|_2^2\big)+C[\|U\|_4^2 (\|T_e\|_2^2+\|q_e\|_2^2)+\|U\|_4^8]\\
    \leq&C(1+\|U\|_4^4) (\|T_e\|_2^2+\|q_e\|_2^2+\|U\|_4^4)\\
    &+\frac12\big(\big\||U|\nabla U\big\|_2^2+\big\|\nabla|U|^2\big\|_2^2\big),
  \end{align*}
  and thus
  \begin{align}
    \frac{d}{dt}\|U\|_4^4+2\big\||U|\nabla U\big\|_2^2\leq C(1+\|U\|_4^4) (\|T_e\|_2^2+\|q_e\|_2^2+\|U\|_4^4). \label{l42}
  \end{align}

  Multiplying equation (\ref{eq4}) by $|T_e|^2T_e$, and integrating over $\mathbb R^2$, then it follows from integration by parts and the H\"older inequality that
  \begin{equation*}
    \frac14\frac{d}{dt}\|T_e\|_4^4=(1-\bar Q)\int_{\mathbb R^2}\nabla\cdot v|T_e|^2T_edxdy\leq(1-\bar Q)\|\nabla v\|_4\|T_e\|_4^3,
  \end{equation*}
  which implies
  \begin{equation}
    \label{l43}
    \frac{d}{dt}\|T_e\|_4^2\leq2(1-\bar Q)\|\nabla v\|_4\|T_e\|_4.
  \end{equation}
  Similar manipulation to equation (\ref{eq5}) yields
  \begin{equation}
    \label{l44}
    \frac{d}{dt}\|q_e\|_4^2\leq 2(\bar Q+\alpha)\|\nabla v\|_4\|q_e\|_4.
  \end{equation}
  Summing (\ref{l42})--(\ref{l44}) up, and integrating the resultant in $t$ yield
  \begin{align}
    &(\|U\|_4^4+\|T_e\|_4^2+\|q_e\|_4^2)(t)+2\int_0^t\big\||U|\nabla U\big\|_2^2 ds\nonumber\\
    \leq&\|U_0\|_4^4+\|T_{e,0}\|_4^2+\|q_{e,0}\|_4^2+2(1+\alpha)\int_0^t\|\nabla v\|_4(\|T_e\|_4+\|q_e\|_4)ds\nonumber\\
    &+C\int_0^t(1+\|U\|_4^4)(\|U\|_4^4+\|T_e\|_4^2+\|q_e\|_4^2)ds, \label{l45}
  \end{align}
  for $t\in[0,\mathcal T)$.

  We need to estimate the term $\int_0^t\|\nabla v\|_4(\|T_e\|_4+\|q_e\|_4)ds$ on the right-hand side of (\ref{l45}). To this end, applying Lemma \ref{lemapp2} (in the Appendix section) to equation (\ref{eq3}) yields
  \begin{align}
    \int_0^t\|\nabla v\|_4^4ds
    \leq&C\left[\|\nabla v_0\|_2^4+\left(\int_0^t\big\||U|\nabla U\big\|_2^2ds\right)^2+\int_0^t (\|T_e\|_4^4+\|q_e\|_4^4)ds\right], \label{l45-4}
  \end{align}
  for all $t\in[0,\mathcal T)$, where $C$ is a positive constant independent of $t$.
  Thanks to this estimate, it follows from the H\"older and Young inequalities that
  \begin{align}
    &2(1+\alpha)\int_0^t\|\nabla v\|_4(\|T_e\|_4+\|q_e\|_4)ds\nonumber\\
    \leq&Ct^{\frac12}\left(\int_0^t\|\nabla v\|_4^4ds\right)^{\frac14}\left(\int_0^t(\|T_e\|_4^4+\|q_e\|_4^4)ds \right)^{\frac14}\nonumber\\
    \leq&Ct^{\frac12}\left(\|\nabla v_0\|_2^4+\int_0^t\big\||U|\nabla U\big\|_2^2ds \right)^{\frac 12}\left(\int_0^t(\|T_e\|_4^4+\|q_e\|_4^4)ds \right)^{\frac14}\nonumber\\
    &+Ct^{\frac12}
    \left( \int_0^t (\|T_e\|_4^4+\|q_e\|_4^4)ds\right)^{\frac12}
    \nonumber\\
    \leq&\int_0^t\big\||U|\nabla U\big\|_2^2ds+C\left(\int_0^t(\|T_e\|_4^4+\|q_e\|_4^4)ds \right)^{\frac12}+C\|\nabla v_0\|_2^4.\label{l46}
  \end{align}

  Substituting (\ref{l46}) into (\ref{l45}), and denoting
  \begin{eqnarray*}
    &&f(t)=(\|U\|_4^4+\|T_e\|_4^2+\|q_e\|_4^2)(t)+\int_0^t\big\||U|\nabla U\big\|_2^2ds,
  \end{eqnarray*}
  we have
  $$
  f(t)\leq f(0)+C\|\nabla v_0\|_2^4+C\left(\int_0^tf^2(s)ds\right)^{\frac12}+C\int_0^t (1+\|U\|_4^4) f(s) ds,
  $$
  for all $t\in[0,\mathcal T)$.
  By Proposition \ref{basicest}, and using the Ladyzhenskaya inequality, one can easily check that $\int_0^\mathcal T(1+\|U\|_4^4)dt\leq C$, for a positive constant $C$ depending only on $\alpha,\bar Q,\mathcal T$ and the initial norm $\|(u_0, v_0, T_{e,0}, q_{e,0})\|_2$. Therefore, applying Lemma \ref{gron} to the above inequality, one obtains
  $$
  \sup_{0\leq t<\mathcal T}\|(U,T_e,q_e)(t)\|_4^2+\int_0^\mathcal T\big\||U|\nabla U\big\|_2^2dt\leq C,
  $$
  and further, recalling (\ref{l45-4}), proves the conclusion.
\end{proof}

Thanks to the {\it a priori} estimate stated in the above proposition, one can immediately obtain the $L^\infty(0,\mathcal T; H^1(\mathbb R^2))$ estimate on $u$ as stated in the following proposition.

\begin{proposition}
  \label{h1u}
  We have the following estimates
  \begin{equation*}
    \sup_{0\leq t<\mathcal T}\|\nabla u(t)\|_2^2+\int_0^\mathcal T\|\Delta u\|_2^2ds\leq C,
  \end{equation*}
  for a positive constant $C$ depending only on the parameters $\alpha, \bar Q, \mathcal T$ and the initial norm $\|u_0\|_{H^1(\mathbb R^2)}+\|(v_0,T_{e,0},q_{e,0})\|_{ L^2(\mathbb R^2)\cap L^4(\mathbb R^2)}$, and in particular is independent of $\varepsilon$.
\end{proposition}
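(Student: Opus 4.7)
The plan is to carry out the standard $H^1$-energy estimate for the two-dimensional Navier--Stokes-type equation satisfied by $u$, treating $\nabla\cdot(v\otimes v)$ as a forcing that is already controlled by Proposition \ref{l4est}. More precisely, I will take the $L^2$ inner product of equation (\ref{eq1}) with $-\Delta u$; by (\ref{eq2}) the pressure term integrates by parts to zero, the dissipation term contributes $\|\Delta u\|_2^2$, and the time derivative contributes $\tfrac12\frac{d}{dt}\|\nabla u\|_2^2$, yielding
\begin{equation*}
\frac12\frac{d}{dt}\|\nabla u\|_2^2+\|\Delta u\|_2^2
=\int_{\mathbb R^2}\big[(u\cdot\nabla)u+\nabla\cdot(v\otimes v)\big]\cdot\Delta u\,dxdy.
\end{equation*}

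Next I would bound the right-hand side pointwise by $C(|u||\nabla u|+|v||\nabla v|)|\Delta u|$, apply Cauchy--Schwarz and Young's inequality to absorb a factor of $\tfrac12\|\Delta u\|_2^2$ into the left-hand side, and arrive at
\begin{equation*}
\frac{d}{dt}\|\nabla u\|_2^2+\|\Delta u\|_2^2
\leq C\bigl(\||u|\nabla u\|_2^2+\||v|\nabla v\|_2^2\bigr)
\leq C\big\||U|\nabla U\big\|_2^2,
\end{equation*}
using $|u|^2|\nabla u|^2+|v|^2|\nabla v|^2\leq (|u|^2+|v|^2)(|\nabla u|^2+|\nabla v|^2)=|U|^2|\nabla U|^2$.

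The conclusion then follows by integrating in $t$ and invoking the bound
$\int_0^{\mathcal T}\||U|\nabla U\|_2^2\,dt\leq C$ established in Proposition \ref{l4est}, together with the initial datum $\|\nabla u_0\|_2\leq\|u_0\|_{H^1}$. There is no genuine obstacle here: the only substantive analytical work---controlling the nonlinear Reynolds-type stress $\nabla\cdot(v\otimes v)$ via the $L^4$-estimate on $v$ and the weighted estimate on $|U|\nabla U$---has already been absorbed into the statement of Proposition \ref{l4est}, so this proposition is essentially a corollary obtained by one more differential energy identity plus a Gronwall-free integration in time.
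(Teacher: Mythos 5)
Your proposal is correct and follows essentially the same route as the paper: test equation (\ref{eq1}) with $-\Delta u$, bound the nonlinear terms pointwise by $|U||\nabla U||\Delta u|$, absorb half of $\|\Delta u\|_2^2$ via Young's inequality, and integrate in time using the bound on $\int_0^{\mathcal T}\||U|\nabla U\|_2^2\,dt$ from Proposition \ref{l4est}. No discrepancies worth noting.
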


\begin{proof}
  Multiplying equation (\ref{eq1}) by $-\Delta u$, and integrating over $\mathbb R^2$, then it follows from integration by parts that
  \begin{align*}
    \frac12\frac{d}{dt}\|\nabla u\|_2^2+\|\Delta u\|_2^2=\int_{\mathbb R^2}[(u\cdot\nabla)u+\nabla(v\otimes v)]\cdot\Delta u dxdy\\
    \leq 3\int_{\mathbb R^2}|U||\nabla U||\Delta u|dxdy\leq\frac12\|\Delta u\|_2^2+C\big\||U|\nabla U|\big\|_2^2,
  \end{align*}
  where, again, $U=(u,v)$, and thus
  $$
  \frac{d}{dt}\|\nabla u\|_2^2+\|\Delta u\|_2^2\leq C\big\||U|\nabla U|\big\|_2^2,
  $$
  for all $t\in[0,\mathcal T)$. From which, in view of Proposition \ref{l4est}, the conclusion follows.
\end{proof}

Finally, we are ready to prove the $L^\infty(0,\mathcal T; H^1(\mathbb R^2))$ estimate on $(v,T_e,q_e)$, that is the following proposition.

\begin{proposition}
  \label{h1v}
  The following estimate holds
  \begin{align*}
    \sup_{0\leq t<\mathcal T}\|(\nabla v,\nabla T_e,\nabla q_e)(t)\|_2^2+\int_0^\mathcal T\left(\|\Delta v\|_2^2 +\frac{\|\nabla q_e^+\|_2^2 }{\varepsilon}+\|\nabla u\|_\infty\right)dt\leq C,
  \end{align*}
  where $C$ is a positive constant depending only on $\alpha,\bar Q,\mathcal T$ and the initial norms $\|(u_0, v_0, T_{e,0}, q_{e,0})\|_{H^1}$, and in particular is independent of $\varepsilon$.
\end{proposition}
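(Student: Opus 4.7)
\medskip
\noindent\emph{Plan.} The strategy is to test the relevant equations against their natural $H^1$-duals and sum. Pair (\ref{eq3}) with $-\Delta v$, and pair the gradients of (\ref{eq4}) and (\ref{eq5}) with $\nabla T_e$ and $\nabla q_e$ respectively; then integrate and add. The incompressibility $\nabla\cdot u=0$ kills the transport terms $\int u\cdot\nabla(|\nabla T_e|^2/2)\,dxdy$ and $\int u\cdot\nabla(|\nabla q_e|^2/2)\,dxdy$; the viscosity in (\ref{eq3}) produces the good term $\|\Delta v\|_2^2$; and upon writing $\nabla q_e^+=\chi_{\{q_e>0\}}\nabla q_e$, the relaxation term in (\ref{eq5}) contributes the singular-in-$\varepsilon$ dissipation $\frac{1+\alpha}{\varepsilon}\|\nabla q_e^+\|_2^2$.

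The cross-coupling contributions---namely $\frac{1}{1+\alpha}\int\nabla(T_e-q_e)\cdot\Delta v$, together with $(1-\bar Q)\int\nabla(\nabla\cdot v)\cdot\nabla T_e$ and $-(\bar Q+\alpha)\int\nabla(\nabla\cdot v)\cdot\nabla q_e$---are each controlled by Cauchy--Schwarz and absorbed into $\tfrac14\|\Delta v\|_2^2+C(\|\nabla T_e\|_2^2+\|\nabla q_e\|_2^2)$. The Navier--Stokes convection terms $\int[(u\cdot\nabla)v+(v\cdot\nabla)u]\cdot\Delta v$ are estimated as in the 2D NSE, via H\"older and Ladyzhenskaya, yielding
\begin{equation*}
\tfrac14\|\Delta v\|_2^2+C\|u\|_4^4\|\nabla v\|_2^2+C\|v\|_4^2\|\nabla u\|_2\|\Delta u\|_2,
\end{equation*}
whose coefficients are controlled in $L^\infty_t$ and $L^2_t$ respectively by Propositions \ref{basicest}--\ref{h1u}.

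What resists this treatment are the terms $\int\partial_ku_j\,\partial_jT_e\,\partial_kT_e$ and its $q_e$ analogue: since neither $T_e$ nor $q_e$ enjoys parabolic smoothing, no Ladyzhenskaya-type interpolation is available, and one is forced into the crude bound $\|\nabla u\|_\infty(\|\nabla T_e\|_2^2+\|\nabla q_e\|_2^2)$. Setting $Y=\|\nabla v\|_2^2+\|\nabla T_e\|_2^2+\|\nabla q_e\|_2^2$, adding the three estimates then produces
\begin{equation*}
\frac{d}{dt}Y+\tfrac12\|\Delta v\|_2^2+\tfrac{2(1+\alpha)}{\varepsilon}\|\nabla q_e^+\|_2^2\le C\bigl(1+\|u\|_4^4+\|\nabla u\|_\infty\bigr)Y+C\|v\|_4^2\|\nabla u\|_2\|\Delta u\|_2,
\end{equation*}
which is closed by Gronwall \emph{provided} $\int_0^{\mathcal T}\|\nabla u\|_\infty\,dt<\infty$.

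The main obstacle is thus to obtain $\int_0^{\mathcal T}\|\nabla u\|_\infty\,dt<\infty$ using only the prior estimates. My plan is to apply a parabolic Sobolev estimate---in the spirit of Lemma \ref{lemapp2}, furnished by the Appendix---to the Stokes-type problem $\partial_tu-\Delta u+\nabla p=-(u\cdot\nabla)u-\nabla\cdot(v\otimes v)$, $\nabla\cdot u=0$. The sources have enhanced integrability beyond $L^2_tL^2_x$: the term $(u\cdot\nabla)u$ sits in $L^2_tL^p_x$ for every $p<\infty$ (using $u\in L^\infty_tH^1_x\cap L^2_tH^2_x$ from Proposition \ref{h1u} together with $H^1(\mathbb R^2)\hookrightarrow L^p$), while $v\cdot\nabla v\in L^4_tL^2_x$ by Proposition \ref{l4est}. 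The appendix lemma should then yield $\nabla u\in L^2_tL^\infty_x$, whence $\int_0^{\mathcal T}\|\nabla u\|_\infty\,dt\le\sqrt{\mathcal T}\,\|\nabla u\|_{L^2_tL^\infty_x}<\infty$, completing the argument.
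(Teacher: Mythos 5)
Your energy part (testing (\ref{eq3}) against $-\Delta v$ and the gradients of (\ref{eq4}), (\ref{eq5}) against $\nabla T_e$, $\nabla q_e$, keeping $\frac{1+\alpha}{\varepsilon}\|\nabla q_e^+\|_2^2$ as a good term, and reducing everything to Gronwall modulo $\int_0^{\mathcal T}\|\nabla u\|_\infty\,dt<\infty$) is exactly the paper's argument and is fine. The gap is in the step you yourself flag as the main obstacle: the claimed conclusion $\nabla u\in L^2_t L^\infty_x$ is false under the hypothesis $u_0\in H^1$ only. The free evolution of the initial data already defeats it: for generic $u_0\in H^1(\mathbb R^2)$ one has $\|\nabla e^{t\Delta}u_0\|_\infty\sim t^{-1/2}\|\nabla u_0\|_2$ near $t=0$, which is integrable but not square integrable in time. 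Your plan never separates off the initial-data contribution, and no maximal-regularity lemma applied to the full Cauchy problem with $H^1$ data can give $L^2_tL^\infty_x$ for $\nabla u$. This is precisely why the paper splits $u=\bar u+\hat u$ ((\ref{LI4})--(\ref{LI5})), proves only the $L^1_t L^\infty_x$ bound, and controls the homogeneous part $\hat u$ through the weighted estimate $\sup_t\|\sqrt t\,\Delta\hat u\|_2^2+\int_0^{\mathcal T}\|\sqrt t\,\nabla\Delta\hat u\|_2^2\,dt\le\|\nabla u_0\|_2^2$, which produces exactly the integrable singularity $t^{-1/4}$ in $\|\nabla\hat u\|_\infty\le C\|\nabla\hat u\|_2^{1/2}\|\sqrt t\,\nabla\Delta\hat u\|_2^{1/2}t^{-1/4}$.

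There is a second, independent problem with the forced part. You place $\nabla\cdot(v\otimes v)$ only in $L^4_tL^2_x$; maximal regularity then gives at best $\Delta\bar u\in L^4_tL^2_x$, and in two dimensions $\Delta\bar u\in L^2_x$ together with $\nabla\bar u\in L^2_x$ does not control $\|\nabla\bar u\|_\infty$ (borderline failure of the Sobolev embedding $H^1(\mathbb R^2)\not\hookrightarrow L^\infty$). One needs the source in $L^p_x$ for some $p>2$; the paper achieves this by interpolating $\||U|\nabla U\|_3\le\|\nabla U\|_4\|U\|_{12}\le C\|\nabla U\|_4^{4/3}\|U\|_4^{2/3}$, so that $\int_0^{\mathcal T}\|\Delta\bar u\|_3^3\,dt<\infty$ follows from Propositions \ref{l4est}--\ref{h1u}, and then applies $\|\nabla\bar u\|_\infty\le C\|\nabla\bar u\|_2^{1/4}\|\Delta\bar u\|_3^{3/4}$. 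Both defects are repairable exactly along these lines, but as written your final step does not close.
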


\begin{proof}
  Multiplying equation (\ref{eq4}) by $-\Delta T_e$, and integrating over $\mathbb R^2$, then it follows from integration by parts and the H\"older inequality that
  \begin{align*}
    \frac12\frac{d}{dt}\|\nabla T_e\|_2^2=&(1-\bar Q)\int_{\mathbb R^2}\nabla T_e\cdot\nabla(\nabla\cdot v) dxdy -\int_{\mathbb R^2}\partial_iu\cdot\nabla T_e\partial_i T_edxdy\\
    \leq&(1-\bar Q)\|\Delta v\|_2\|\nabla T_e\|_2 +\|\nabla u\|_\infty \|\nabla T_e\|_2^2.
  \end{align*}
  Similarly, one can derive from equation (\ref{eq5}) that
  $$
  \frac12\frac{d}{dt}\|\nabla q_e\|_2^2+\frac{1+\alpha}{\varepsilon}\|\nabla q_e^+\|_2^2\leq (\alpha+ \bar Q)\|\Delta v\|_2\|\nabla q_e\|_2+\|\nabla u\|_\infty \|\nabla q_e\|_2^2.
  $$
  Summing the previous two inequalities up yields
  \begin{align*}
  &\frac12\frac{d}{dt}(\|\nabla T_e\|_2^2+ \|\nabla q_e\|_2^2)+\frac{1+\alpha}{\varepsilon}\|\nabla q_e^+\|_2^2\\
  \leq&(1+\alpha)\|\Delta v\|_2(\|\nabla T_e\|_2+ \|\nabla q_e\|_2) +\|\nabla u\|_\infty (\|\nabla T_e\|_2^2+ \|\nabla q_e\|_2^2)\\
  \leq&\frac14\|\Delta v\|_2^2+[\|\nabla u\|_\infty+2(\alpha+1)^2](\|\nabla T_e\|_2^2+ \|\nabla q_e\|_2^2),
  \end{align*}
  and thus
  \begin{eqnarray}
   &&\frac{d}{dt}(\|\nabla T_e\|_2^2+ \|\nabla q_e\|_2^2) +\frac{1+\alpha}{\varepsilon}\|\nabla q_e^+\|_2^2\nonumber\\
  &\leq& 2[\|\nabla u\|_\infty+2(\alpha+1)^2](\|\nabla T_e\|_2^2+ \|\nabla q_e\|_2^2)+\frac12\|\Delta v\|_2^2.\label{LI1}
  \end{eqnarray}

  Multiplying equation (\ref{eq3}) by $-\Delta v$, and integrating over $\mathbb R^2$, then it follows from integration by parts and the Young inequality that
  \begin{align*}
    \frac12\frac{d}{dt}\|\nabla v\|_2^2+\|\Delta v\|_2^2=&\int_{\mathbb R^2}\left[\frac{1}{1+\alpha}\nabla(T_e-q_e)-(u\cdot\nabla)v-(v\cdot\nabla) u\right]\cdot\Delta vdxdy \\
    \leq&\frac14\|\Delta v\|_2^2+C\left(\|\nabla T_e\|_2^2+\|\nabla q_e\|_2^2+\big\||U|\nabla U\big\|_2^2\right),
  \end{align*}
  and thus
  \begin{equation}
    \frac{d}{dt}\|\nabla v\|_2^2+\frac32\|\Delta v\|_2^2\leq C\left(\|\nabla T_e\|_2^2+\|\nabla q_e\|_2^2+\big\||U|\nabla U\big\|_2^2\right).\label{LI2}
  \end{equation}

  Summing (\ref{LI1}) with (\ref{LI2}) up yields
  \begin{eqnarray*}
   &&\frac{d}{dt}\|(\nabla v, \nabla T_e, \nabla q_e)\|_2^2+\|\Delta v\|_2^2 +\frac{1+\alpha}{\varepsilon}\|\nabla q_e^+\|_2^2\nonumber\\
  &\leq&C\big\||U|\nabla U\big\|_2^2+C(\|\nabla u\|_\infty+1)(\|\nabla T_e\|_2^2+ \|\nabla q_e\|_2^2),
  \end{eqnarray*}
  from which, by the Gronwall inequality, and using Proposition \ref{l4est}, one obtains
  \begin{align}
    &\sup_{0\leq t<\mathcal T}\|(\nabla v, \nabla T_e, \nabla q_e)(t)\|_2^2+\int_0^\mathcal T\left(\|\Delta v\|_2^2 +\frac{1+\alpha}{\varepsilon}\|\nabla q_e^+\|_2^2\right)dt\nonumber\\
    \leq&e^{C\int_0^\mathcal T(\|\nabla u\|_\infty+1)dt}\left(\|(\nabla v_0, \nabla T_{e,0}, \nabla q_{e,0})\|_2^2+C\int_0^\mathcal T\big\||U|\nabla U\big\|_2^2dt\right)\nonumber\\
    \leq&C(\alpha,\bar Q,\mathcal T,\|(U_0,T_{e,0}, q_{e,0})\|_{H^1})\exp\left\{C\int_0^\mathcal T(\|\nabla u\|_\infty+1)dt\right\}.\label{LI3}
  \end{align}

  To complete the proof, one still need to estimate $\int_0^\mathcal T\|\nabla u\|_\infty dt$.
  It follows from Propositions \ref{l4est}--\ref{h1u} and the Ladyzhenskaya inequality that
  \begin{align}
  \int_0^\mathcal T(\|\nabla u\|_4^4+\|\nabla v\|_4^4)dt
    \leq& C\int_0^\mathcal T(\|\nabla u\|_2^2\|\Delta u\|_2^2+\|\nabla v\|_4^4)dt\nonumber\\
    \leq&C(\alpha,\bar Q, \mathcal T, \|(u_0, v_0,T_{e,0},q_{e,0})\|_{H^1}).\label{LI6}
  \end{align}
  We decompose $u$ as $u=\bar u+\hat u$, where $\bar u$ and $\hat u$, respectively, are the unique solutions to the following two systems
  \begin{equation}
    \label{LI4}
    \left\{
    \begin{array}{l}
    \partial_t\bar u-\Delta\bar u+\nabla\bar p=-(u\cdot\nabla)u-\nabla\cdot(v\otimes v),\\
    \nabla\cdot\bar u=0,\\
    \bar u|_{t=0}=0,
    \end{array}
    \right.
  \end{equation}
  and
  \begin{equation}
    \label{LI5}
    \left\{
    \begin{array}{l}
    \partial_t\hat u-\Delta\hat u+\nabla\hat p=0,\\
    \nabla\cdot\hat u=0,\\
    \hat u|_{t=0}=u_0.
    \end{array}
    \right.
  \end{equation}

  We are going to estimate $\bar u$ and $\hat u$. Let's first estimate $\bar u$. By the $L^q(0,\mathcal T; W^{2,q})$ type estimates for the Stokes equations (see, e.g., Solonnikov \cite{SOLONNIKOV1,SOLONNIKOV2}), we have
  $$
  \|(\partial_t\bar u, \Delta\bar u)\|_{L^q(\mathbb R^2\times(0,\mathcal T))}\leq C\||U|\nabla U\|_{L^q(\mathbb R^2\times(0,\mathcal T))},
  $$
  for any $q\in(1,\infty)$, and thus it follows from the H\"older inequality and Gagliardo-Nirenberg inequality, $\|\varphi\|_{12}^3\leq C\|\varphi\|_4^2\|\nabla\varphi\|_4$, (\ref{LI6}) and Proposition \ref{l4est} that
  \begin{align*}
    \int_0^\mathcal T\|\Delta\bar u\|_3^3dt\leq& C\int_0^\mathcal T\||U|\nabla U\|_3^3dt\leq C\int_0^\mathcal T\|\nabla U\|_4^3\|U\|_{12}^3dt\\
    \leq& C\int_0^\mathcal T\|\nabla U\|_4^4\|U\|_4^2dt\leq C(\alpha,\bar Q, \mathcal T, \|(u_0, v_0,T_{e,0},q_{e,0})\|_{H^1}).
  \end{align*}
  One can deduce easily from equation (\ref{LI4}), by using Proposition \ref{l4est}, that
  \begin{align*}
\sup_{0\leq t<\mathcal T}\|\nabla\bar u(t)\|_2^2&+\int_0^\mathcal T\|\Delta\bar u\|_2^2dt\leq C\int_0^\mathcal T\||U|\nabla U\|_2^2dt\\
  \leq& C(\alpha,\bar Q,\mathcal T,\|(u_0, v_0, T_{e,0}, q_{e, 0})\|_{H^1}).
  \end{align*}
  Thanks to the above two estimates, it follows from the Gagliardo-Nirenberg, $\|\varphi\|_\infty\leq C\|\varphi\|_2^{\frac14}\|\Delta\varphi\|_2^{\frac34}$, and the H\"older inequalities that
  \begin{align}
    \int_0^\mathcal T\|\nabla\bar u\|_\infty dt\leq& C\int_0^\mathcal T\|\nabla\bar u\|_2^{\frac14}\|\Delta\bar u\|_3^{\frac34}dt\nonumber\\
    \leq& C\left(\int_0^\mathcal T\|\nabla\bar u\|_2^2dt\right)^{\frac18}\left(\int_0^\mathcal T\|\Delta\bar u\|_3^3dt\right)^{\frac14}\mathcal T^{\frac85}\nonumber\\
    \leq& C(\alpha,\bar Q,\mathcal T,\|(u_0, v_0, T_{e,0}, q_{e, 0})\|_{H^1}).\label{LI7}
  \end{align}

  Next, we estimate $\hat u$. Multiplying equation (\ref{LI5}) by $(t\Delta^2-\Delta)\hat u$, and integrating the resultant over $\mathbb R^2$, then it follows from integration by parts that
  $$
  \frac12\frac{d}{dt}(\|\nabla\hat u\|_2^2+\|\sqrt t\Delta\hat u\|_2^2)+\frac12\|\Delta\hat u\|_2^2+\|\sqrt t\nabla\Delta\hat u\|_2^2=0.
  $$
  Therefore, we have
  $$
  \sup_{0\leq t<\mathcal T}(\|\nabla\hat u\|_2^2+\|\sqrt t\Delta\hat u\|_2^2)+\int_0^\mathcal T(\|\Delta\hat u\|_2^2+\|\sqrt t\nabla\Delta\hat u\|_2^2)dt\leq\|\nabla u_0\|_2^2.
  $$
  Thanks to this estimate, it follows from the Gagliardo-Nirenberg (Agmon), $\|\varphi\|_\infty\leq C\|\varphi\|_2^{\frac12}\|\Delta\varphi\|_2^{\frac12}$, and H\"older inequalities that
  \begin{align*}
    \int_0^\mathcal T\|\nabla\hat u\|_\infty dt\leq& C\int_0^\mathcal T\|\nabla\hat u\|_2^{\frac12}\|\nabla\Delta\hat u\|_2^{\frac12}dt
    =C\int_0^\mathcal T\|\nabla\hat u\|_2^{\frac12}\|\sqrt t\nabla\Delta\hat u\|_2^{\frac12}t^{-\frac14}dt\\
    \leq&C\left(\int_0^\mathcal T\|\nabla\hat u\|_2^2dt\right)^{\frac14}\left(\int_0^\mathcal T\|\sqrt t\nabla\Delta\hat u\|_2^2dt\right)^{\frac14}\left(\int_0^\mathcal Tt^{-\frac12}dt\right)^{\frac12}\\
    \leq&C\mathcal T^{\frac14}\|\nabla u_0\|_2^{\frac12}\|\nabla u_0\|_2^{\frac12}\mathcal T^{\frac14}=C\mathcal T^{\frac12}\|\nabla u_0\|_2.
  \end{align*}

  Combining the above estimate with (\ref{LI7}), one has
  $$
  \int_0^\mathcal T\|\nabla u\|_\infty dt\leq\int_0^\mathcal T(\|\nabla\bar u\|_\infty +\|\nabla\hat u\|_\infty) dt\leq C(\alpha,\bar Q,\mathcal T,\|(u_0, v_0, T_{e,0}, q_{e, 0})\|_{H^1}).
  $$
  which, when substituted into (\ref{LI3}), yields the conclusion.
\end{proof}

As a corollary of Propositions \ref{basicest}--\ref{h1v}, we have the {\it a priori} estimate to $(u, v, T_e, q_e)$, as stated in the following:

\begin{corollary}
  \label{cor}
Suppose that (\ref{req}) holds, and the initial data
\begin{equation}\label{asum2}
(u_0, v_0, T_{e,0}, q_{e,0})\in H^1(\mathbb R^2), \quad \nabla\cdot u_0=0.
\end{equation}
Let $(u, v, T_e, q_e)$ be the unique strong solution to system (\ref{eq1})--(\ref{eq5}), on $\mathbb R^2\times(0,\mathcal T)$, $0<\mathcal T<\infty$, with initial data $(u_0, v_0, T_{e,0}, q_{e,0})$. Then, the following hold:

(i) We have the estimate
\begin{align*}
\sup_{0\leq t<\mathcal T}&\|(u, v,T_e,q_e)(t)\|_{H^1}^2
+\int_0^\mathcal T\bigg(\frac{\|q_e^+\|_{H^1}^2}{\varepsilon}+
\|(u,v)\|_{H^2}^2 +\|\nabla u\|_\infty\bigg)dt \\ &+\int_0^\mathcal T\|(\partial_tu,\partial_tv,\partial_t T_e)\|_2^2  dt
\leq C\left(\alpha,\bar Q,\mathcal T,\|(u_0, v_0, T_{e,0}, q_{e,0})\|_{H^1}\right).
\end{align*}

(ii) Suppose in addition to (\ref{asum2}) that
$q_{e,0}^+=0, \text{ a.e.~on }\mathbb R^2,$
then we have
\begin{align*}
\sup_{0\leq t<\mathcal T} \frac{\|q_e^+\|_2^2}{\varepsilon}
 +\int_0^\mathcal T \|\partial_tq_e\|_2^2 dt\leq C\left(\alpha,\bar Q,\mathcal T,\|(u_0, v_0, T_{e,0}, q_{e,0})\|_{H^1}\right).
\end{align*}

(iii) Assume in addition to (\ref{asum2}) that $(\nabla T_{e,0},\nabla q_{e,0})\in L^m(\mathbb R^2)$, for some $m\in(2,\infty)$, then we
have the estimate
$$
\sup_{0\leq t<\mathcal T}\|(\nabla T_e,\nabla q_e)\|_m^2\leq C\left(\alpha,\bar Q,\mathcal T,m,\|(u_0, v_0, T_{e,0}, q_{e,0})\|_{H^1},\|(\nabla T_{e,0}, \nabla q_{e,0})\|_m\right).
$$
\end{corollary}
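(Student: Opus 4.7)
\textbf{Plan for the proof of Corollary \ref{cor}.}

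Part (i) combines Propositions \ref{basicest}--\ref{h1v} with direct bounds on the time derivatives. All assertions except those on $\partial_tu, \partial_tv, \partial_tT_e$ are the immediate concatenation of the three propositions. To estimate the time derivatives, I solve each equation for $\partial_t$: for instance, from (\ref{eq4}) one has $\partial_tT_e = (1-\bar Q)\nabla\cdot v - u\cdot\nabla T_e$, so
\begin{equation*}
\|\partial_tT_e\|_2 \leq C\|\nabla v\|_2 + \|u\|_\infty\|\nabla T_e\|_2.
\end{equation*}
The Agmon-type bound $\|u\|_\infty\leq C\|u\|_2^{1/4}\|\Delta u\|_2^{3/4}$ together with Proposition \ref{h1u} places $\|u\|_\infty$ in $L^2(0,\mathcal T)$, which combined with the $L^\infty(0,\mathcal T;L^2)$ bound on $\nabla T_e$ yields the required estimate. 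For $\partial_tu$, the pressure is recovered from $-\Delta p=\nabla\cdot\nabla\cdot(u\otimes u+v\otimes v)$ and controlled by Calder\'on--Zygmund, while the nonlinear terms are dominated as in Propositions \ref{l4est}--\ref{h1v}; the argument for $\partial_tv$ is analogous.

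For (ii), the assumption $q_{e,0}^+=0$ a.e.\ makes $q_e^+$ a natural test function with zero initial contribution. Multiplying (\ref{eq5}) by $q_e^+$ and integrating, the convective term vanishes since $\int u\cdot\nabla q_e\,q_e^+\,dxdy=\tfrac12\int u\cdot\nabla(q_e^+)^2\,dxdy=0$ by $\nabla\cdot u=0$. A Cauchy--Schwarz--Young split then gives
\begin{equation*}
\frac{d}{dt}\|q_e^+\|_2^2+\frac{1+\alpha}{\varepsilon}\|q_e^+\|_2^2\leq C\varepsilon\|\nabla v\|_2^2.
\end{equation*}
An integrating-factor argument using $e^{(1+\alpha)t/\varepsilon}$ together with $\|q_{e,0}^+\|_2=0$ yields the pointwise estimate $\|q_e^+(t)\|_2^2\leq C\varepsilon^2\sup_{[0,\mathcal T]}\|\nabla v\|_2^2$. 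In particular $\|q_e^+\|_2^2/\varepsilon\leq C\varepsilon$, and crucially $\|q_e^+\|_2/\varepsilon\leq C$ pointwise in $t$. Reading $\partial_tq_e$ directly from (\ref{eq5}) and combining this last bound with the $L^2(0,\mathcal T)$ bound on $\|u\|_\infty$ and the $L^\infty(0,\mathcal T;L^2)$ bound on $\nabla q_e$ from part (i) produces $\int_0^\mathcal T\|\partial_tq_e\|_2^2\,dt\leq C$.

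For (iii), I apply $\nabla$ to (\ref{eq4}) and (\ref{eq5}), test the resulting identities against $|\nabla T_e|^{m-2}\nabla T_e$ and $|\nabla q_e|^{m-2}\nabla q_e$ respectively, and integrate. The convection terms vanish by $\nabla\cdot u=0$. For the $q_e$ equation, the relaxation contribution is favorable because $\nabla q_e^+=\mathbf{1}_{\{q_e>0\}}\nabla q_e$, so $\int\nabla q_e^+\cdot|\nabla q_e|^{m-2}\nabla q_e\,dxdy=\int_{\{q_e>0\}}|\nabla q_e|^m\,dxdy\geq 0$ and may simply be dropped. Dividing by $\|\nabla T_e\|_m^{m-1}$ and $\|\nabla q_e\|_m^{m-1}$ and combining yields
\begin{equation*}
\frac{d}{dt}\|(\nabla T_e,\nabla q_e)\|_m\leq\|\nabla u\|_\infty\,\|(\nabla T_e,\nabla q_e)\|_m + C\|\nabla^2v\|_m.
\end{equation*}
The main obstacle is controlling $\|\nabla^2v\|_m$ in an appropriate time norm; for this I invoke the parabolic $L^q(0,\mathcal T;L^m)$ maximal regularity proved in the Appendix, applied to (\ref{eq3}) whose forcing is $-(u\cdot\nabla)v-(v\cdot\nabla)u+(1+\alpha)^{-1}\nabla(T_e-q_e)$. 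This provides a bound on $\|\nabla^2v\|_{L^q(0,\mathcal T;L^m)}$ by $\|(\nabla T_e,\nabla q_e)\|_{L^q(0,\mathcal T;L^m)}$ plus quantities already controlled by part (i). Inserting this into the above ODE produces a coupled integral inequality in $\|(\nabla T_e,\nabla q_e)\|_m$; the Gronwall-type Lemma \ref{gron}, together with the fact that $\int_0^\mathcal T\|\nabla u\|_\infty\,dt<\infty$ from Proposition \ref{h1v}, closes the estimate.
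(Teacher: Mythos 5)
Your parts (i) and (iii) follow essentially the paper's own route. Part (i) is the concatenation of Propositions \ref{basicest}--\ref{h1v} plus reading the time derivatives off the equations; the paper simply uses the embedding $\|u\|_\infty\le C\|u\|_{H^2}$ to place $\|u\|_\infty$ in $L^2(0,\mathcal T)$, and your Agmon exponents $\frac14,\frac34$ are the three-dimensional ones (in $\mathbb R^2$ they should be $\frac12,\frac12$), but the conclusion is unaffected. Part (iii) is exactly the paper's argument: apply $\nabla$, test with $|\nabla T_e|^{m-2}\nabla T_e$, control $\int_0^t\|\nabla^2 v\|_m\,ds$ via the Appendix estimate, and close with Lemma \ref{gron}; be aware that you need the version with $H^1$ initial data and an $L^1$-in-time output (Lemma \ref{lemapp3}), not plain maximal regularity, precisely because $\nabla v_0$ is only in $L^2$, and that it is only the transport part $u\cdot\nabla\partial_i T_e$ that vanishes --- the commutator $\partial_i u\cdot\nabla T_e$ survives and is what produces the $\|\nabla u\|_\infty\|(\nabla T_e,\nabla q_e)\|_m$ term you correctly retain. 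Part (ii) is where you genuinely diverge. The paper multiplies (\ref{eq5}) by $\partial_t q_e$, so that $\frac{1+\alpha}{\varepsilon}\int_{\mathbb R^2} q_e^+\partial_t q_e\,dxdy=\frac{1+\alpha}{2\varepsilon}\frac{d}{dt}\|q_e^+\|_2^2$ and $\|\partial_t q_e\|_2^2$ appear in a single identity, and one integration in time delivers both $\sup_t\|q_e^+\|_2^2/\varepsilon\le C$ and $\int_0^{\mathcal T}\|\partial_t q_e\|_2^2\,dt\le C$ simultaneously. You instead test with $q_e^+$, keep the damping term $\frac{1+\alpha}{\varepsilon}\|q_e^+\|_2^2$, and run an integrating factor, obtaining the pointwise-in-time bound $\|q_e^+(t)\|_2^2\le C\varepsilon^2$, which for small $\varepsilon$ is strictly stronger than what is claimed (for $\varepsilon\ge 1$ the stated bound follows trivially from the energy estimate $\|q_e^+\|_2\le\|q_e\|_2\le C$, so nothing is lost); you then read $\partial_t q_e$ directly from the equation using $\|q_e^+\|_2/\varepsilon\le C$ together with $\|u\|_\infty\in L^2(0,\mathcal T)$. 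Both arguments are correct; the paper's choice of test function gets the two estimates in one stroke, while yours isolates the exponential damping mechanism and yields a sharper decay rate for $\|q_e^+\|_2$ that would feed directly into the rate in Theorem \ref{cong}.
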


\begin{proof}
(i) The estimate on all the terms, except those involving the time derivatives, follow directly from Propositions \ref{basicest}--\ref{h1v}. The desired estimate for $(\partial_tu, \partial_tv)$ follows directly from the {\it a priori} estimate in Propositions \ref{l4est} and \ref{h1v}, by using the $L^2(0,T; H^2)$ type estimates to the Stokes and heat equations. By Propositions \ref{basicest}, \ref{h1u} and \ref{h1v}, it follows from equation (\ref{eq4}) and the Sobolev embedding inequalities that
\begin{align*}
  \int_0^\mathcal T\|\partial_tT_e\|_2^2dt\leq&\int_0^\mathcal T[(1-\bar Q)\|\nabla  v\|_2^2+\|u\|_\infty^2\|\nabla T_e\|_2^2]dt\\
  \leq& C+C\int_0^\mathcal T\|u\|_\infty^2dt\leq C+C\int_0^\mathcal T\|u\|_{H^2}^2dt\leq C.
\end{align*}

(ii) Multiplying equation (\ref{eq5}) by $\partial_tq_e$, and integrating over $\mathbb R^2$, then it follows from the Young and Sobolev embedding inequalities and Proposition \ref{h1v} that
\begin{eqnarray*}
  \frac{1+\alpha}{2\varepsilon}\frac{d}{dt}\|q_e^+\|_2^2 +\|\partial_tq_e\|_2^2
  &=&-\int_{\mathbb R^2}[u\cdot\nabla q_e+(\bar Q+\alpha)\nabla\cdot v]\partial_tq_edxdy\\
  &\leq&\frac12\|\partial_tq_e\|_2^2+C (\|u\|_{\infty}^2\|\nabla q_e\|_2^2+\|\nabla v\|_2^2)\\
  &\leq&\frac12\|\partial_tq_e\|_2^2+  C(\|u\|_{H^2}^2+1),
\end{eqnarray*}
from which, by (i), the conclusion in (ii) follows.

(iii) Applying the operator $\nabla$ to equation (\ref{eq4}), multiplying the resultant by $|\nabla T_e|^{m-2}\nabla T_e$, and integrating over $\mathbb R^2$, then it follows from integration by parts and the H\"older inequality that
  \begin{align*}
    \frac1m\frac{d}{dt}\|\nabla T_e\|_m^m=&(1-\bar Q)\int_{\mathbb R^2}|\nabla T_e|^{m-2}\nabla T_e\cdot\nabla(\nabla\cdot v) dxdy\\
    &-\int_{\mathbb R^2}\partial_iu\cdot\nabla T_e|\nabla T_e|^{m-2}\partial_i T_edxdy\\
    \leq&(1-\bar Q)\|\nabla^2v\|_m\|\nabla T_e\|_m^{m-1}+\|\nabla u\|_\infty \|\nabla T_e\|_m^m.
  \end{align*}
  Thus
  $$
  \frac{d}{dt}\|\nabla T_e\|_m\leq (1-\bar Q)\|\nabla^2v\|_m +\|\nabla u\|_\infty \|\nabla T_e\|_m.
  $$
  Similarly, one can derive from equation (\ref{eq5}) that
  $$
  \frac{d}{dt}\|\nabla q_e\|_m\leq (\alpha+ \bar Q)\|\nabla^2v\|_m+\|\nabla u\|_\infty \|\nabla q_e\|_m.
  $$

  Summing the above two inequalities, one obtains
  $$
  \frac{d}{dt}(\|\nabla T_e\|_m+ \|\nabla q_e\|_m)\leq (1+\alpha)\|\nabla^2v\|_m+\|\nabla u\|_\infty(\|\nabla T_e\|_m+ \|\nabla q_e\|_m),
  $$
  from which, integrating with respect to $t$, we have
  \begin{equation}
    \|(\nabla T_e,\nabla q_e)\|_m(t)\leq C\int_0^t\|\nabla^2v\|_mds+C\int_0^t\|\nabla u\|_\infty \|(\nabla T_e,\nabla q_e)\|_mds, \label{641}
  \end{equation}
  for all $t\in[0,\mathcal T)$.

  Applying Lemma \ref{lemapp3}, see the Appendix section below, to equation (\ref{eq3}), and using the Sobolev embedding inequality, one deduces
  \begin{align*}
    \int_0^t&\|\nabla^2v\|_mds\leq C\left[\|\nabla v_0\|_2+\left(\int_0^t\|(\nabla T_e, \nabla q_e,|u||\nabla v|,|v||\nabla u|)\|_m^2ds\right)^{\frac12}\right]\\
    \leq&C\left[\|\nabla v_0\|_2+\left( \int_0^t(\|(\nabla T_e, \nabla q_e)\|_m^2+\|u\|_{2m}^2\|\nabla v\|_{2m}^2+\|v\|_{2m}^2\|\nabla u\|_{2m}^2)ds\right)^{\frac12}\right]\\
    \leq&C\left[\|\nabla v_0\|_2+\left( \int_0^t(\|(\nabla T_e, \nabla q_e)\|_m^2+\|(u,v)\|_{H^1}^2\|(\nabla u, \nabla v)\|_{H^1}^2)ds\right)^{\frac12}\right],
  \end{align*}
  for any $t\in[0,\mathcal T)$, where $C$ is a positive constant depending only on $m$ and $\mathcal T$, and is in particular independent of $t\in[0,\mathcal T)$.
  By (i), the above inequality implies
  $$
  \int_0^t\|\nabla^2v\|_mds\leq C+C\left(\int_0^t\|(\nabla T_e, \nabla q_e)\|_m^2ds\right)^{\frac12},
  $$
  for any $t\in[0,\mathcal T)$, and for a positive constant $C$ independent of $t\in[0,\mathcal T)$. Substituting the above estimate into (\ref{641}), and setting $f(t)= \|(\nabla T_e,\nabla q_e)\|_m(t)$ yield
  \begin{equation*}
    f(t)\leq C\left[\|\nabla v_0\|_2+\left(\int_0^tf(s)^2ds\right)^{\frac12}+\int_0^t\|\nabla u\|_\infty f(s)ds\right],
  \end{equation*}
  for any $t\in[0,\mathcal T)$, where $C$ is a positive constant independent of $t\in[0,\mathcal T)$. Recalling (i), and applying Lemma \ref{gron}, the conclusion stated in (iii) follows.
\end{proof}

Now, we are ready to prove the global existence, uniqueness and well-posedness of strong solutions to the Cauchy problem of system (\ref{eq1})--(\ref{eq5}):

\begin{proof}[\textbf{Proof of Theorem \ref{glopositive}}]
  The uniqueness of strong solutions follows from Proposition \ref{loc} directly, while the {\it a priori} estimates in (i)--(iii) follow from (i)--(iii) of Corollary \ref{cor}, respectively. Therefore, we still need to prove the global existence of strong solutions as stated in (i), and the continuous dependence of the strong solutions on the initial date as stated in (iii).

  To prove the global existence of strong solutions, it suffices to extend the local solution established in Proposition \ref{loc} to be a global one. By repeating Proposition \ref{loc}, one can extend the local solution $(u, v, T_e, q_e)$ to the maximal interval of existence $[0,\mathcal T_*)$. Then, we need to show that $\mathcal T_*=\infty$. Suppose, by contradiction, that $\mathcal T_*<\infty$, then we must have
  \begin{equation*}
    \lim_{t\rightarrow \mathcal T_*^-} \|(u, v,T_e, q_e)\|_{H^1}^2 =\infty.
  \end{equation*}
  However, by Corollary \ref{cor}, which holds since $\mathcal T_*<\infty$, the quantity $\|(u, v,T_e, q_e)\|_{H^1}^2$ is bounded on $[0,\mathcal T_*)$, which is a contradiction, and thus $\mathcal T_*=\infty$.

  We now prove the continuous dependence of the unique strong solutions on the initial data as stated in (iii) on any finite interval $[0,\mathcal T]$. Therefore, we choose arbitrary $\mathcal T\in(0,\infty)$, and focus on the interval $[0,\mathcal T]$. Let $(u^{(1)}, v^{(1)}, T_e^{(1)}, q_e^{(1)})$ and $(u^{(2)}, v^{(2)}, T_e^{(2)}, q_e^{(2)})$ be the unique solutions to system (\ref{eq1})--(\ref{eq5}), respectively, with initial data $(u_0^{(1)}, v_0^{(1)}, T_{e,0}^{(1)}, q_{e,0}^{(1)})$ and $(u_0^{(2)}, v_0^{(2)}, T_{e,0}^{(2)}, q_{e,0}^{(2)})$. Denote by
  $$
  (\delta u, \delta v, \delta T_e, \delta q_e)=(u^{(1)}, v^{(1)}, T_e^{(1)}, q_e^{(1)})-(u^{(2)}, v^{(2)}, T_e^{(2)}, q_e^{(2)}),
  $$
  and
  $$
  (\delta u_0 , \delta v_0 , \delta T_{e,0} , \delta q_{e,0})=(u_0^{(1)}, v_0^{(1)}, T_{e,0}^{(1)}, q_{e,0}^{(1)})-(u_0^{(2)}, v_0^{(2)}, T_{e,0}^{(2)}, q_{e,0}^{(2)}).
  $$
  Then, similar to (\ref{add}), we have
  \begin{eqnarray}
  &&\frac{d}{dt}\|(\delta u,\delta v,\delta T_e,\delta q_e)\|_2^2+\frac12(\|\nabla\delta u\|_2^2+\|\nabla\delta v\|_2^2)\nonumber\\
  &\leq&C\int_{\mathbb R^2}[(|\nabla u^{(2)}|+|\nabla v^{(2)}|+|\nabla v^{(1)}|+|v^{(1)}|^2+|v^{(2)}|^2)(|\delta u|^2+|\delta v|^2)\nonumber\\
  &&+|\delta T_e|^2+|\delta q_e|^2+|\nabla T_e^{(2)}||\delta u||\delta T_e|+|\nabla  q_e^{(2)}||\delta u||\delta q_e|]dxdy,\label{add2}
  \end{eqnarray}
  for all $t\in(0,\mathcal T]$.
  All the integrals on the right-hand side of the above inequality, except the last two terms, can be dealt with in the way as before in (\ref{LINEW6.2}), while for the last two terms, we estimate them by the H\"older, Sobolev embedding and Young inequalities as follows
  \begin{align*}
  &C\int_{\mathbb R^2}(|\nabla T_e^{(2)}||\delta u||\delta T_e|+|\nabla  q_e^{(2)}||\delta u||\delta q_e|)dxdy\\
  \leq&C\|\nabla T_e^{(2)}\|_m\|\delta u\|_{\frac{2m}{m-2}}\|\delta T_e\|_2+C\|\nabla  q_e^{(2)}\|_m\|\delta u\|_{\frac{2m}{m-2}}\|\delta q_e\|_2\\
  \leq&C\|\nabla T_e^{(2)}\|_m\|\delta u\|_{H^1}\|\delta T_e\|_2+C\|\nabla  q_e^{(2)}\|_m\|\delta u\|_{H^1}\|\delta q_e\|_2\\
  \leq&\frac18\|\delta u\|_{H^1}^2+C(\|\nabla T_e^{(2)}\|_m^2+\|\nabla  q_e^{(2)}\|_m^2)(\|\delta T_e\|_2^2+\|\delta q_e\|_2^2),
  \end{align*}
  for all $t\in(0,\mathcal T]$. Therefore, we deduce from (\ref{add2}) that
  \begin{eqnarray*}
  &&\frac{d}{dt}\|(\delta u,\delta v,\delta T_e,\delta q_e)\|_2^2+\frac18\|(\delta u,\delta v)\|_{H^1}^2\nonumber\\
  &\leq& C\left(1+\|(u^{(2)}, v^{(2)})\|_4^4+\|(\nabla u^{(2)}, \nabla v^{(2)},\nabla v^{(1)})\|_2^2\right)\|(\delta u,\delta v,\delta T_e, \delta q_e)\|_2^2\nonumber\\
  &&+C(\|\nabla T_e^{(2)}\|_m^2+\|\nabla q_e^{(2)}\|_m^2)(\|\delta T_e\|_2^2+\|\delta q_e\|_2^2),
  \end{eqnarray*}
  for all $t\in(0,\mathcal T]$. Applying the Gronwall inequality to the above inequality yields
  \begin{eqnarray*}
    &&\sup_{0\leq s\leq t}\|(\delta u,\delta v,\delta T_e,\delta q_e)(s)\|_2^2+\frac18\int_0^t\|(\delta u,\delta v)\|_{H^1}^2ds\\
    &\leq&e^{
            C\int_0^t\left(1+\|(u^{(2)}, v^{(2)})\|_4^4+\|(\nabla u^{(2)}, \nabla v^{(2)},\nabla v^{(1)})\|_2^2+\|(\nabla T_e^{(2)},\nabla q_e^{(2)})\|_m^2 \right)ds}\\
            &&\times\|(\delta u_0 , \delta v_0 , \delta T_{e,0} , \delta q_{e,0})\|_2^2,
  \end{eqnarray*}
  for all $t\in(0,\mathcal T]$. Recalling the regularities in (i) and (iii), the above inequality implies the continuous dependence of the strong solution on the initial data on $[0,\mathcal T]$, for any arbitrary $\mathcal T\in(0,\infty)$.
  This completes the proof.
\end{proof}

\section{Global existence and uniqueness of the limiting system}
\label{globalzero}
In this section, we prove the global existence and uniqueness of strong solutions to the Cauchy problem of the limiting system (\ref{ineq1})--(\ref{ineq7}):

\begin{proof}[\textbf{Proof of Theorem \ref{glozero}}]
\textbf{(i) The global existence and regularities.} By Theorem \ref{glopositive}, for any positive $\varepsilon$, there is a unique global strong solution $(u_\varepsilon, v_\varepsilon, T_{e\varepsilon}, q_{e\varepsilon})$ to system (\ref{eq1})--(\ref{eq5}), with initial data $(u_0, v_0, T_{e,0}, q_{e,0})$, such that
\begin{align*}
  \sup_{0\leq t\leq\mathcal T}&\left(\frac{\|q_{e\varepsilon}^+(t)\|_2^2}{\varepsilon}+\|(u_\varepsilon, v_\varepsilon,T_{e\varepsilon},q_{e\varepsilon})(t)\|_{H^1}^2 \right)+\int_0^\mathcal T\bigg(\frac{\|\nabla q_{e\varepsilon}^+\|_2^2}{\varepsilon}+
  \|(u_\varepsilon,v_\varepsilon)\|_{H^2}^2\bigg)dt \\
  &+\int_0^\mathcal T\left(\|(\partial_tu_\varepsilon,\partial_tv_\varepsilon,\partial_tT_{e\varepsilon}, \partial_tq_{e\varepsilon})\|_2^2+\|\nabla u_\varepsilon\|_\infty\right)  dt\leq C,
\end{align*}
for any positive finite time $\mathcal T$, where $C$ is a constant depending only on $\alpha, \bar Q, \mathcal T$ and initial norms $\|(u_0, v_0,T_{e,0},q_{e,0})\|_{H^1}$, and in particular, is independent of $\varepsilon$. Moreover, if in addition that $(\nabla T_{e,0}, \nabla q_{e,0})\in L^m(\mathbb R^2)$, for some $m\in(2,\infty)$, then we have further that
$$
\sup_{0\leq t<\mathcal T}\|(\nabla T_{e\varepsilon},\nabla q_{e\varepsilon})(t)\|_m^2\leq C\left(\alpha,\bar Q,\mathcal T,m,\|(u_0, v_0, T_{e,0}, q_{e,0})\|_{H^1},\|(\nabla T_{e,0}, \nabla q_{e,0})\|_m\right),
$$
for any positive finite time $\mathcal T$, and, again, the estimate is independent of $\varepsilon$.

Thanks to the above $\varepsilon$-independent estimates, there is a subsequence, still denoted by $(u_\varepsilon, v_\varepsilon, T_{e\varepsilon}, q_{e\varepsilon})$, and $(u, v, T_{e}, q_{e})$, such that
\begin{eqnarray*}
&&(u_\varepsilon, v_\varepsilon)\overset{*}{\rightharpoonup}(u, v),\quad\mbox{ in }L^\infty(0,\mathcal T; H^1(\mathbb R^2)), \\
&&(u_\varepsilon, v_\varepsilon)\rightharpoonup(u, v),\quad\mbox{ in }L^2(0,\mathcal T;H^2(\mathbb R^2)), \\
&&(\partial_t u_\varepsilon, \partial_tv_\varepsilon)\rightharpoonup(\partial_tu, \partial_tv),\quad\mbox{ in }L^2(0,\mathcal T; L^2(\mathbb R^2)), \\
&&(T_{e\varepsilon}, q_{e\varepsilon})\overset{*}{\rightharpoonup}(T_e, q_e),\quad\mbox{ in }L^\infty(0,\mathcal T; H^1(\mathbb R^2)),\\
&&(\partial_tT_{e\varepsilon},\partial_tq_{e\varepsilon})
\rightharpoonup(\partial_tT_e,\partial_tq_e),\quad\mbox{ in }L^2(0,\mathcal T; L^2(\mathbb R^2)),\\
&&q_{e\varepsilon}^+\rightarrow0,\quad\mbox{ in }L^\infty(0,\mathcal T; L^2(\mathbb R^2))\cap L^2(0,T; H^1(\mathbb R^2)),
\end{eqnarray*}
for any positive finite time $\mathcal T$, where $\rightharpoonup$ and $\overset{*}{\rightharpoonup}$ are the weak and weak-* convergences, respectively.
The last convergence in the above implies that
\begin{equation*}
q_e^+=0,\mbox{ or equivalently }q_e\leq 0, \quad\mbox{ a.e.~in }\mathbb R^2\times(0,\mathcal T).
\end{equation*}
Moreover, by the Aubin-Lions lemma, and using the Cantor diagonal argument, we have a subsequence, still denoted by $(u_\varepsilon, v_\varepsilon, T_{e\varepsilon}, q_{e\varepsilon})$, such that
\begin{eqnarray*}
  &(u_\varepsilon, v_\varepsilon)\rightarrow(u, v),\quad \mbox{ in }C([0,\mathcal T]; L^2(B_R))\cap L^2(0,T; H^1(B_R)),\\
  &(T_{e\varepsilon}, q_{e\varepsilon})\rightarrow(T_e, q_e),\quad\mbox{ in }C([0,\mathcal T]; L^2({B_R})),
\end{eqnarray*}
for any positive finite  time $\mathcal T$, and  disc $B_R \subset  \mathbb{R}^2$, of arbitrary radius $R>0$.

Thanks to the previous convergences, one can take the limit $\varepsilon\rightarrow0^+$ in the equations (\ref{eq1})--(\ref{eq4}) for $(u_\varepsilon, v_\varepsilon, T_{e\varepsilon}, q_{e\varepsilon})$ to deduce that $(u, v, T_e, q_e)$ satisfies equations (\ref{eq1})--(\ref{eq4}), a.e.~in $\mathbb R^2\times(0,\infty)$, since $R$ in the previous strong convergences is arbitrary; and moreover, by the lower semi-continuity of the norms, the {\it a priori} estimates stated in Theorem \ref{glozero} hold.
In order to complete the proof of existence, we still need to prove that $q_e$ satisfies inequalities (\ref{ineq5})--(\ref{ineq7}). Inequality (\ref{ineq6}) has already been verified before. While for (\ref{ineq5}), note that equation (\ref{eq5}) for $q_{e\varepsilon}$ implies that
$$
\partial_tq_{e\varepsilon}+u_\varepsilon\cdot\nabla q_{e\varepsilon}+(\bar Q+\alpha)\nabla\cdot v_\varepsilon\leq 0, \quad\mbox{a.e.~in }\mathbb R^2\times(0,\infty),
$$
from which, recalling the previous convergences, one can take the limit $\varepsilon\rightarrow0^+$ to see that
\begin{equation*}
\partial_tq_{e }+u \cdot\nabla q_{e }+(\bar Q+\alpha)\nabla\cdot v \leq 0, \quad\mbox{a.e.~in }\mathbb R^2\times(0,\infty),
\end{equation*}
which is (\ref{ineq5}).

It remains to verify (\ref{ineq7}). To this end, let's define the set
$$
\mathcal O^-=\{(x,t)|q_e(x,t)<0, x\in\mathbb R^2, t\in(0,\infty)\},
$$
and for any positive integers $j,k,l$, we define
$$
\mathcal O^-_{jkl}=\left\{(x,t)\bigg|q_e(x,t)<-\frac1j, x\in B_k, t\in(0,l)\right\},
$$
where $B_k \subset \mathbb{R}^2$ is a disc of radius $k$, and  $j,k,l \in \mathbb{N}$.
Noticing that
$$
\mathcal O^-=\cup_{j}^\infty\cup_{k=1}^\infty\cup_{l=1}^\infty \mathcal O^-_{jkl},
$$
to prove that (\ref{ineq7}) holds a.e.~on $\mathcal O^-$, it suffices to show that it holds a.e.~on $\mathcal O^-_{jkl}$, for any positive integers $j,k,l$.
Now, let's fix the positive integers $j,k,l$. Recalling that $q_{e\varepsilon}\rightarrow q_e$ in $C([0,\mathcal T]; L^2(B_R))$, for any positive time $\mathcal T$ and positive radius $R$, it is straightforward that $q_{e\varepsilon}\rightarrow q_e$ in $L^2(\Omega_{jkl})$. Therefore, there is a subsequence, still denoted by $q_{e\varepsilon}$, such that $q_{e\varepsilon}\rightarrow q_e$, a.e.~on $\mathcal O^-_{jkl}$. By the Egoroff theorem, for any positive number $\eta>0$, there is a subset $E_\eta$ of $\mathcal O^-_{jkl}$, with $|E_\eta|\leq\eta$, such that
$$
q_{e\varepsilon}\rightarrow q_e,\quad\mbox{uniformly on }\mathcal O^-_{jkl}\setminus E_\eta.
$$
Recalling the definition of $\mathcal O^-_{jkl}$, this implies that for sufficiently small positive $\varepsilon$, it holds that
$$
q_{e\varepsilon}\leq q_e+\frac{1}{2j}\leq-\frac{1}{2j}<0, \quad \mbox{on }\mathcal O^-_{jkl}\setminus E_\eta.
$$
As a result, by equation (\ref{eq5}) for $q_{e\varepsilon}$, we have, for any sufficiently small positive $\varepsilon$, that
$$
\mathcal G_\varepsilon:=\partial_tq_{e\varepsilon}+u_\varepsilon\cdot\nabla q_{e\varepsilon}+(\bar Q+\alpha)\nabla\cdot v_\varepsilon=0, \quad\mbox{ a.e.~on }\mathcal O^-_{jkl}\setminus E_\eta.
$$
Noticing that
$$
\mathcal G_\varepsilon\rightharpoonup  \partial_tq_{e }+u \cdot\nabla q_{e }+(\bar Q+\alpha)\nabla\cdot v=:\mathcal G ,\quad\mbox{ in }L^2(0,\mathcal T; L^2(\mathbb R^2)),
$$
for any positive finite time $\mathcal T$, which in particular implies $\mathcal G_\varepsilon\rightharpoonup\mathcal G$, in $L^2(\mathcal O_{jkl}\setminus E_\eta)$. Since $\mathcal G_\varepsilon=0$, a.e.~on $\mathcal O_{jkl}\setminus E_\eta$, we have $\mathcal G=0$, a.e.~on $\mathcal O_{jkl}\setminus E_\eta$, that is
$$
\partial_tq_{e }+u \cdot\nabla q_{e }+(\bar Q+\alpha)\nabla\cdot v=0, \quad\mbox{ a.e.~on }\Omega_{jkl}\setminus E_\eta.
$$
By Lemma \ref{zeroa.e.}, this implies that the above equation holds, a.e.~on $\mathcal O^-_{jkl}$, and further on $\mathcal O^-$, in other words, (\ref{ineq7}) holds.

Therefore, $(u, v, T_e, q_e)$ is a global strong solution to system (\ref{ineq1})--(\ref{ineq7}), with initial data $(u_0, v_0, T_{e,0}, q_{e,0})$, satisfying the regularities stated in the theorem.

\textbf{(ii) The uniqueness.} Let $(u, v, T_e, q_e)$ and $(\tilde u, \tilde v, \tilde T_e, \tilde q_e)$ be two strong solutions to system (\ref{ineq1})--(\ref{ineq7}), with the same initial data $(u_0, v_0, T_{e,0}, q_{e, 0})$. Define the new functions
$$
(\delta u, \delta v, \delta T_e,\delta q_e)=(u, v, T_e, q_e)-(\tilde u, \tilde v, \tilde T_e, \tilde q_e).
$$
Then, one can easily check that $(\delta u, \delta v, \delta T_e, \delta q_e)$ satisfies equations (\ref{deq1})--(\ref{deq4}), and the same argument as that for (\ref{uni1}) yields
\begin{align}
  &\frac{d}{dt}\|(\delta u,\delta v,\delta T_e)\|_2^2+\|\nabla\delta u\|_2^2+\|\nabla\delta v\|_2^2\nonumber\\
  \leq&C\int_{\mathbb R^2}[(|\nabla\tilde u|+|\nabla\tilde v|+|\nabla v|+|v|^2+|\tilde v|^2)(|\delta u|^2+|\delta v|^2)\nonumber\\
  &+|\delta T_e|^2+|\delta q_e|^2+|\nabla\tilde T_e||\delta u||\delta T_e|]dxdy.\label{uni1'}
\end{align}

We need to estimate $\delta q_e$. To this end, we first derive the equation for $\delta q_e$. We divide the domain $\Omega:=\mathbb R^2\times(0,\infty)$ as follows
$$
\Omega=\Omega_1\cup \Omega_2\cup \Omega_3\cup \Omega_4,
$$
where
\begin{eqnarray*}
  &\Omega_1=\{q_e<0\}\cap\{\tilde q_e<0\}, \quad \Omega_2=\{q_e<0\}\cap\{\tilde q_e=0\},\\
  &\Omega_3=\{q_e=0\}\cap\{\tilde q_e<0\},\quad \Omega_4=\{q_e=0\}\cap\{\tilde q_e=0\}.
\end{eqnarray*}
On the set $\Omega_1$, $q_e$ and $\tilde q_e$ satisfies, respectively
\begin{eqnarray*}
  &\partial_tq_e+u\cdot\nabla q_e+(\bar Q+\alpha)\nabla\cdot v=0,\\
  &\partial_t\tilde q_e+\tilde u\cdot\nabla\tilde  q_e+(\bar Q+\alpha)\nabla\cdot\tilde  v=0.
\end{eqnarray*}
Subtracting the above two equations yields
\begin{equation}
  \label{neq1}
  \partial_t\delta q_e+u\cdot\nabla\delta q_e+\delta u\cdot\nabla\tilde q_e+(\bar Q+\alpha)\nabla\cdot\delta v=0,\quad\mbox{ on }\Omega_1.
\end{equation}
On the set $\Omega_2$, $q_e$ satisfies
$$
\partial_tq_e+u\cdot\nabla q_e+(\bar Q+\alpha)\nabla\cdot v=0,
$$
while for $\tilde q_e$, since $\tilde q_e\equiv0$ on $\Omega_2$, one has $(\partial_tq_e,\nabla q_e)=0$, a.e.~on $\Omega_2$, and thus $\partial_t\tilde q_e+\tilde u\cdot\nabla\tilde q_e=0$, a.e.~on $\Omega_2$. Here, we have used the well-known fact that the derivatives of a function $f\in W^{1,1}_{\text{loc}}(\Omega)$ vanish, a.e.~on any level set $\{(x,y,t)\in\Omega|f(x,y,t)=c\}$, see, e.g., \cite{EVANS} or page 297 of \cite{GIOVANNI}. We will used, without any further mentions, this fact several times in the proof of this part. Therefore, one has
\begin{equation}
  \partial_t\delta q_e+u\cdot\nabla\delta q_e+\delta u\cdot\nabla\tilde q_e+(\bar Q+\alpha)\nabla\cdot v =0,\quad\mbox{a.e.~on }\Omega_2.\label{neq2}
\end{equation}
Similar to (\ref{neq2}), on the domain $\Omega_3$, one has
\begin{equation}
  \label{neq3}
  \partial_t\delta q_e+u\cdot\nabla\delta q_e+\delta u\cdot\nabla\tilde q_e-(\bar Q+\alpha)\nabla\cdot\tilde v=0,\quad\mbox{a.e.~on }\Omega_3.
\end{equation}
Finally, since $\tilde q_e=q_e=0$, on $\Omega_4$, one has
$$
\partial_t\delta q_e+u\cdot\nabla\delta q_e+\delta u\cdot\nabla\tilde q_e=0,\quad\mbox{a.e.~on }\Omega_4.
$$
Thanks to the last equation, as well as (\ref{neq1})--(\ref{neq3}), we obtain the equation for $\delta q_e$ as
\begin{eqnarray}
&\partial_t\delta q_e+u\cdot\nabla\delta q_e+\delta u\cdot\nabla\tilde q_e
  = -(\bar Q+\alpha)[\nabla\cdot\delta v\chi_{\Omega_1}
  +\nabla\cdot v\chi_{\Omega_2}-\nabla\cdot\tilde v\chi_{\Omega_3}]\nonumber\\
  &=-(\bar Q+\alpha)[\nabla\cdot\delta v-\nabla\cdot\delta v\chi_{\Omega_4}
  +\nabla\cdot \tilde v\chi_{\Omega_2}-\nabla\cdot v\chi_{\Omega_3}],\label{deq5}
\end{eqnarray}
a.e.~on $\Omega=\mathbb R^2\times(0,\infty).$ Moreover, equation (\ref{deq5}) holds in $L^2_{\text{loc}}([0,\infty); L^2(\mathbb R^2))$.

Multiplying equation (\ref{deq5}) by $\delta q_e$, and integrating over $\mathbb R^2$, then it follows from integration by parts that
\begin{align}
  \frac12\frac{d}{dt}\|\delta q_e\|_2^2=&-\int_{\mathbb R^2}[\delta u\cdot\nabla\tilde q_e\delta q_e+(\bar Q+\alpha)\nabla\cdot\delta v] \delta q_e dxdy\nonumber\\
  &-(\bar Q+\alpha)\int_{\mathbb R^2}(\nabla\cdot\tilde v\chi_{\Omega_2}-\nabla\cdot v\chi_{\Omega_3})(q_e-\tilde q_e)dxdy\nonumber\\
  \leq&\frac14\int_{\mathbb R^2}|\nabla\delta v|^2dxdy+C\int_{\mathbb R^2}(|\delta q_e|^2+|\nabla\tilde q_e||\delta u||\delta q_e|)dxdy\nonumber\\
  &-(\bar Q+\alpha)\int_{\mathbb R^2}(\nabla\cdot\tilde v\chi_{\Omega_2}-\nabla\cdot v\chi_{\Omega_3})(q_e-\tilde q_e)dxdy. \label{uni2}
\end{align}
Recalling that $\tilde q_e=0$ on $\Omega_2$, we have $\partial_t\tilde q_e+\tilde u\cdot\nabla\tilde q_e=0$, a.e.~on $\Omega_2$, and thus it follows from (\ref{ineq5}) for $(\tilde u, \tilde v, \tilde T_e, \tilde q_e)$
that $\nabla\cdot\tilde v\leq0$, a.e.~on $\Omega_2$. Similarly, one has $\nabla\cdot v\leq0$, a.e.~on $\Omega_3$. Thanks to these facts, we deduce
\begin{eqnarray*}
  \nabla\cdot\tilde v\chi_{\Omega_2}(q_e-\tilde q_e)=\nabla\cdot\tilde v\chi_{\Omega_2}q_e\geq0,\\
  -\nabla\cdot v\chi_{\Omega_3}(q_e-\tilde q_e)=\nabla\cdot v\chi_{\Omega_3}\tilde q_e\geq0.
\end{eqnarray*}
Therefore, it follows from (\ref{uni2}) that
\begin{equation*}
  \frac{d}{dt}\|\delta q_e\|_2^2\leq\frac12\|\nabla\delta v\|_2^2+C\int_{\mathbb R^2}(|\delta q_e|^2+|\nabla\tilde q_e||\delta u||\delta q_e|)dxdy.
\end{equation*}

Summing the above inequality with (\ref{uni1'}) yields
\begin{eqnarray}
  &&\frac{d}{dt}\|(\delta u,\delta v,\delta T_e,\delta q_e)\|_2^2+\frac12(\|\nabla\delta u\|_2^2+\|\nabla\delta v\|_2^2)\nonumber\\
  &\leq&C\int_{\mathbb R^2}[(|\nabla\tilde u|+|\nabla\tilde v|+|\nabla v|+|v|^2+|\tilde v|^2)(|\delta u|^2+|\delta v|^2)\nonumber\\
  &&+|\delta T_e|^2+|\delta q_e|^2+|\nabla\tilde T_e||\delta u||\delta T_e|+|\nabla\tilde q_e||\delta u||\delta q_e|]dxdy,\label{add3}
\end{eqnarray}
which is exactly the same as inequality (\ref{add}), from which, by the same argument as that in the proof of the uniqueness part of Proposition \ref{loc}, one obtains
$$\|(\delta u,\delta v,\delta T_e,\delta q_e)\|_2^2\equiv0.$$
This proves the uniqueness.

\textbf{(iii) Continuous dependence.}
Let $(u^{(i)}, v^{(i)}, T_e^{(i)}, q_e^{(i)})$ be the unique solutions to system (\ref{ineq1})--(\ref{ineq7}), with initial data $(u_0^{(i)}, v_0^{(i)}, T_{e,0}^{(i)}, q_{e,0}^{(i)})$, $i=1,2$. Suppose, in addition that $(\nabla T_{e,0}^{(i)}, \nabla q_{e,0}^{(i)})\in L^m(\mathbb R^2)$, for some $m\in(2,\infty)$. Then, recalling what we have proven in (i), $(u^{(i)}, v^{(i)}, T_e^{(i)}, q_e^{(i)})$ has the additional regularity that $(T_e^{(i)}, q_e^{(i)})\in L^\infty(0,\mathcal T; L^m(\mathbb R^2))$, for any positive time $\mathcal T$.

Denote by
  $$
  (\delta u, \delta v, \delta T_e, \delta q_e)=(u^{(1)}, v^{(1)}, T_e^{(1)}, q_e^{(1)})-(u^{(2)}, v^{(2)}, T_e^{(2)}, q_e^{(2)}),
  $$
  and
  $$
  (\delta u_0 , \delta v_0 , \delta T_{e,0} , \delta q_{e,0})=(u_0^{(1)}, v_0^{(1)}, T_{e,0}^{(1)}, q_{e,0}^{(1)})-(u_0^{(2)}, v_0^{(2)}, T_{e,0}^{(2)}, q_{e,0}^{(2)}).
  $$
  Then, similar to (\ref{add3}), we have
  \begin{eqnarray*}
  &&\frac{d}{dt}\|(\delta u,\delta v,\delta T_e,\delta q_e)\|_2^2+\frac12(\|\nabla\delta u\|_2^2+\|\nabla\delta v\|_2^2)\nonumber\\
  &\leq&C\int_{\mathbb R^2}[(|\nabla u^{(2)}|+|\nabla v^{(2)}|+|\nabla v^{(1)}|+|v^{(1)}|^2+|v^{(2)}|^2)(|\delta u|^2+|\delta v|^2)\nonumber\\
  &&+|\delta T_e|^2+|\delta q_e|^2+|\nabla T_e^{(2)}||\delta u||\delta T_e|+|\nabla  q_e^{(2)}||\delta u||\delta q_e|]dxdy,
  \end{eqnarray*}
  which is exactly of the same form as (\ref{add2}). Therefore, by the same argument as that in the proof of the continuous dependence part of (iii) of Theorem \ref{glopositive}, we obtain
  \begin{eqnarray*}
    &&\sup_{0\leq s\leq t}\|(\delta u,\delta v,\delta T_e,\delta q_e)(s)\|_2^2+\frac18\int_0^t\|(\delta u,\delta v)\|_{H^1}^2ds\\
    &\leq&e^{
            C\int_0^t\left(1+\|(u^{(2)}, v^{(2)})\|_4^4+\|(\nabla u^{(2)}, \nabla v^{(2)},\nabla v^{(1)})\|_2^2+\|(\nabla T_e^{(2)},\nabla q_e^{(2)})\|_m^2 \right)ds}\\
            &&\times\|(\delta u_0 , \delta v_0 , \delta T_{e,0} , \delta q_{e,0})\|_2^2.
  \end{eqnarray*}
  Recalling the regularities of $(u^{(i)}, v^{(i)}, T_e^{(i)}, q_e^{(i)})$, $i=1,2$, the above inequality implies the continuous dependence of strong solutions on the initial data. This completes the proof of Theorem \ref{glozero}.
\end{proof}

\section{Strong convergence of the relaxation limit}
\label{convergence}
In this section, we prove the strong convergence of the relaxation limit, as $\varepsilon\rightarrow0^+$, of system (\ref{eq1})--(\ref{eq5}) to the limiting system (\ref{ineq1})--(\ref{ineq7}):

\begin{proof}[\textbf{Proof of Theorem \ref{cong}}]
Define the difference function $(\delta u_\varepsilon, \delta v_\varepsilon, \delta T_{e\varepsilon}, \delta q_{e\varepsilon})$ as
$$
(\delta u_\varepsilon, \delta v_\varepsilon, \delta T_{e\varepsilon}, \delta q_{e\varepsilon})=(u_\varepsilon,v_\varepsilon, T_{e\varepsilon}, q_{e\varepsilon})-(u, v, T_e, q_e).
$$
Taking the subtraction between equations (\ref{eq1})--(\ref{eq4}), for $(u_\varepsilon, v_\varepsilon, T_{e\varepsilon}, q_{e\varepsilon})$, and equations (\ref{ineq1})--(\ref{ineq4}), for $(u, v, T_e, q_e)$, one can easily check that
\begin{align}
\partial_t\delta u_\varepsilon+(\delta u_\varepsilon\cdot\nabla)\delta u_\varepsilon&+(\delta u_\varepsilon\cdot\nabla)u+(u\cdot\nabla)\delta u_\varepsilon-\Delta\delta u_\varepsilon\nonumber\\
&+\nabla\delta p_\varepsilon+\nabla\cdot(\delta v_\varepsilon
  \otimes\delta v_\varepsilon+\delta v_\varepsilon\otimes v+v\otimes\delta v_\varepsilon)=0,\label{diff1}\\
\nabla\cdot\delta& u_\varepsilon=0,\label{diff2}\\
\partial_t\delta v_\varepsilon+(\delta u_\varepsilon\cdot\nabla)\delta v_\varepsilon&+(\delta u_\varepsilon\cdot\nabla)v+(u\cdot\nabla)\delta v_\varepsilon-\Delta\delta v_\varepsilon+(\delta v_\varepsilon\cdot\nabla)\delta u_\varepsilon\nonumber\\
  &+(\delta v_\varepsilon\cdot\nabla)u+(v\cdot\nabla)\delta u_\varepsilon=\frac{1}{1+\alpha}\nabla(\delta T_{e\varepsilon} -\delta q_{e\varepsilon}), \label{diff3}\\
\partial_t\delta T_{e\varepsilon}+\delta u_\varepsilon\cdot\nabla\delta T_{e\varepsilon}&+\delta u_\varepsilon\cdot\nabla T_e+u\cdot\nabla\delta T_{e\varepsilon}-(1-\bar Q)\nabla\cdot\delta v_\varepsilon=0,\label{diff4}
\end{align}
where (\ref{diff1})--(\ref{diff4}) hold a.e.~on $\mathbb R^2\times(0,\infty)$ and in $L^2_{\text{loc}}([0,\infty); L^2(\mathbb R^2))$.

Multiplying equations (\ref{diff1}), (\ref{diff3}) and (\ref{diff4}) by $\delta u_\varepsilon$, $\delta v_\varepsilon$ and $\delta T_{e\varepsilon}$, respectively, summing the resultants, integrating over $\mathbb R^2$, and noticing that
\begin{equation*}
  \int_{\mathbb R^2}[\nabla\cdot(\delta v_\varepsilon\otimes\delta v_\varepsilon)\cdot\delta u_\varepsilon+(\delta v_\varepsilon\cdot\nabla)\delta u_\varepsilon\cdot\delta v_\varepsilon]dxdy=0,
\end{equation*}
it follows from integration by parts that
\begin{eqnarray*}
  &&\frac12\frac{d}{dt}\|(\delta u_\varepsilon,\delta v_\varepsilon,\delta T_{e\varepsilon})\|_2^2+\|(\nabla\delta u_\varepsilon,\nabla\delta v_\varepsilon)\|_2^2\\
  &=&-\int_{\mathbb R^2}[(\delta u_\varepsilon\cdot\nabla)u+\nabla\cdot(\delta v_\varepsilon\otimes v+v\otimes\delta v_\varepsilon)]\cdot\delta u_\varepsilon dxdy\\
  &&-\int_{\mathbb R^2}[(\delta u_\varepsilon\cdot\nabla)v+(\delta v_\varepsilon\cdot\nabla)u+(v\cdot\nabla)\delta u_\varepsilon]\cdot\delta v_\varepsilon dxdy\\
  &&-\frac{1}{1+\alpha}\int_{\mathbb R^2}(\nabla\cdot\delta v_\varepsilon)(\delta T_{e\varepsilon}-\delta q_{e\varepsilon})  dxdy\\
  &&-\int_{\mathbb R^2}[\delta u_\varepsilon\cdot\nabla T_e-(1-\bar Q)\nabla\cdot\delta v_\varepsilon]\delta T_{e\varepsilon}dxdy,
\end{eqnarray*}
from which, by the Young inequality, we deduce
\begin{eqnarray*}
  &&\frac12\frac{d}{dt}\|(\delta u_\varepsilon,\delta v_\varepsilon,\delta T_{e\varepsilon})\|_2^2+\|(\nabla\delta u_\varepsilon,\nabla\delta v_\varepsilon)\|_2^2\\
  &\leq&\int_{\mathbb R^2}[(|\nabla u||\delta u_\varepsilon|+2|\nabla v||\delta v_\varepsilon|+2|v||\nabla\delta v_\varepsilon|)|\delta u_\varepsilon|+(|\nabla v||\delta u_\varepsilon|\\
  &&+|\nabla u||\delta v_\varepsilon|+|v||\nabla\delta u_\varepsilon|)|\delta v_\varepsilon|]dxdy +\frac{1}{1+\alpha}\int_{\mathbb R^2}|\nabla\delta v_\varepsilon|(|\delta T_{e\varepsilon}|+|\delta q_{e\varepsilon}|)dxdy\\
  &&+\int_{\mathbb R^2}[(1-\bar Q)|\nabla\delta v_\varepsilon||\delta T_{e\varepsilon}|+|\nabla T_e||\delta u_\varepsilon||\delta T_{e\varepsilon}|]dxdy\\
  &\leq&\frac12\int_{\mathbb R^2}(|\nabla\delta u_\varepsilon|^2+|\nabla\delta v_\varepsilon|^2)dxdy+C\int_{\mathbb R^2}[(|\nabla u|+|\nabla v|+|v|^2)\\
  &&\times(|\delta u_\varepsilon|^2+|\delta v_\varepsilon|^2)+|\delta T_{e\varepsilon}|^2+|\delta q_{e\varepsilon}|^2+|\nabla T_e||\delta u_\varepsilon||\delta T_{e\varepsilon}|]dxdy.
\end{eqnarray*}
Therefore, we obtain
\begin{eqnarray}
  && \frac{d}{dt}\|(\delta u_\varepsilon,\delta v_\varepsilon,\delta T_{e\varepsilon})\|_2^2+\|(\nabla\delta u_\varepsilon,\nabla\delta v_\varepsilon)\|_2^2\nonumber\\
  &\leq&C\int_{\mathbb R^2}[(|\nabla u|+|\nabla v|+|v|^2)(|\delta u_\varepsilon|^2+|\delta v_\varepsilon|^2)\nonumber\\
  && +|\delta T_{e\varepsilon}|^2+|\delta q_{e\varepsilon}|^2+|\nabla T_e||\delta u_\varepsilon||\delta T_{e\varepsilon}|]dxdy.\label{con1}
\end{eqnarray}

We still need to estimate $\|\delta q_{e\varepsilon}\|_2^2$. To this end, we first derive the equation for $\delta q_{e\varepsilon}$. On the set $\{(x,y,t)\in\mathbb R^2\times(0,\infty)|q_e(x,y,t)<0\}$, $q_{e\varepsilon}$ and $q_e$ satisfy equations (\ref{eq5}) and (\ref{ineq7}), respectively, and thus $\delta q_{e\varepsilon}$ satisfies
\begin{equation*}
  \partial_t\delta q_{e\varepsilon}+\delta u_\varepsilon\cdot\nabla\delta q_{e\varepsilon} +\delta u_\varepsilon\cdot\nabla q_e+u\cdot\nabla\delta q_{e\varepsilon}
   +(\bar Q+\alpha)\nabla\cdot\delta v_\varepsilon=-\frac{1+\alpha}{\varepsilon}q_{e\varepsilon}^+,
\end{equation*}
a.e.~on $\{(x,y,t)\in\mathbb R^2\times(0,\infty)|q_e(x,y,t)<0\}$.
On the set $\mathcal O:=\{(x,t)\in\mathbb R^2\times(0,\infty)|q_e(x,t)=0\}$, recalling, again, the well-known fact that the derivatives of a function $f\in W^{1,1}_{\text{loc}}(\mathbb R^2\times(0,\infty)$ vanish, a.e.~on any level set $\{(x,y,t)\in\mathbb R^2\times(0,\infty)|f(x,y,t)=c\}$, we have
$\partial_tq_e+u\cdot\nabla q_e=0$, a.e.~on $\mathcal O$, and $q_{e\varepsilon}$ satisfies (\ref{eq5}). Consequently, $\delta q_{e\varepsilon}$ satisfies
\begin{equation*}
  \partial_t\delta q_{e\varepsilon}+\delta u_\varepsilon\cdot\nabla\delta q_{e\varepsilon} +\delta u_\varepsilon\cdot\nabla q_e+u\cdot\nabla\delta q_{e\varepsilon}+(\bar Q+\alpha)\nabla\cdot v_\varepsilon
  = -\frac{1+\alpha}{\varepsilon}q_{e\varepsilon}^+,
\end{equation*}
a.e.~on $\mathcal O$. Combing the above two equations, one can see that $\delta q_{e\varepsilon}$ satisfies
\begin{align}
  \partial_t\delta q_{e\varepsilon}&+\delta u_\varepsilon\cdot\nabla\delta q_{e\varepsilon}+\delta u_\varepsilon\cdot\nabla q_e+u\cdot\nabla\delta q_{e\varepsilon}\nonumber\\
  &+(\bar Q+\alpha)\nabla\cdot\delta v_\varepsilon=-\frac{1+\alpha}{\varepsilon}q_{e\varepsilon}^+-(\bar Q+\alpha)\nabla\cdot v\chi_{\mathcal O}(x,y,t),  \label{con2}
\end{align}
a.e.~on $\mathbb R^2\times(0,\infty)$ and in $L^2_{\text{loc}}([0,\infty); L^2(\mathbb R^2))$.

Multiplying equation (\ref{con2}) by $\delta q_{e\varepsilon}$, and integrating over $\mathbb R^2$, then it follows from integration by parts that \begin{align}
  &\frac12\frac{d}{dt}\|\delta q_{e\varepsilon}\|_2^2+\frac{1+\alpha}{\varepsilon}\int_{\mathbb R^2}q_{e\varepsilon}^+\delta q_{e\varepsilon} dxdy\nonumber\\
  =&-\int_{\mathbb R^2}[\delta u_\varepsilon\cdot\nabla q_e+(\bar Q+\alpha)(\nabla\cdot\delta v_\varepsilon+\nabla\cdot v\chi_{\mathcal O}(x,y,t))]\delta q_{e\varepsilon} dxdy,\label{newadd}
\end{align}
a.e.~$t\in(0,\infty)$.
Recalling that $q_e\leq0$, we have
\begin{equation}\label{con3}
\int_{\mathbb R^2}q_{e\varepsilon}^+\delta q_{e\varepsilon}dxdy= \int_{\mathbb R^2}q_{e\varepsilon}^+(q_{e\varepsilon}-q_e)dxdy\geq \int_{\mathbb R^2}q_{e\varepsilon}^+q_{e\varepsilon}dxdy=\|q_{e\varepsilon}^+\|_2^2.
\end{equation}
Note that $\partial_tq_e+u\cdot\nabla q_e=0$, a.e.~on $\mathcal O$, it follows from (\ref{ineq5}) that $\nabla\cdot v\leq0$, a.e.~on $\mathcal O$, and thus
\begin{align*}
  &-\nabla\cdot v\chi_{\mathcal O}(x,y,t)\delta q_{e\varepsilon}=-\nabla\cdot v\chi_{\mathcal O}(x,y,t)q_{e\varepsilon}\leq-\nabla\cdot v\chi_{\mathcal O}(x,y,t)q_{e\varepsilon}^+.
\end{align*}
Thanks to the above inequality, it follows from (\ref{newadd}), (\ref{con3}) and the Young inequality that
\begin{align*}
  &\frac12\frac{d}{dt}\|\delta q_{e\varepsilon}\|_2^2+\frac{1+\alpha}{\varepsilon}\| q_{e\varepsilon}^+\|_2^2\\
  \leq&-\int_{\mathbb R^2} [\delta u_\varepsilon\cdot\nabla q_e +(\bar Q+\alpha)\nabla\cdot\delta v_\varepsilon]\delta q_{e\varepsilon}dxdy-(\bar Q+\alpha)\int_{\mathbb R^2}\nabla\cdot v\chi_{\mathcal O}(x,y,t)q_{e\varepsilon}^+ dxdy\\
  \leq&\int_{\mathbb R^2}|\nabla q_e||\delta u_\varepsilon||\delta q_{e\varepsilon}|dxdy+\frac14\|\nabla\delta v_\varepsilon\|_2^2+(\bar Q+\alpha)^2\|\delta q_{e\varepsilon}\|_2^2\\
  &+\frac{1+\alpha}{2\varepsilon}\|q_{e\varepsilon}^+ \|_2^2+\frac{(\bar Q+\alpha)^2}{2(1+\alpha)}\varepsilon\|\nabla v\|_2^2,
\end{align*}
and thus
\begin{align}
  \frac{d}{dt}\|\delta q_{e\varepsilon}\|_2^2+\frac{1+\alpha}{\varepsilon}\|q_{e\varepsilon}^+\|_2^2
  \leq&\frac12\|\nabla\delta v_\varepsilon\|_2^2+2(\bar Q+\alpha)^2\|\delta q_{e\varepsilon}\|_2^2+\frac{(\bar Q+\alpha)^2}{1+\alpha}\varepsilon\|\nabla v\|_2^2\nonumber\\
  &+2\int_{\mathbb R^2}|\nabla q_e||\delta u_\varepsilon||\delta q_{e\varepsilon}|dxdy.\label{con4}
\end{align}

Summing (\ref{con1}) with (\ref{con4}) yields
\begin{align*}
  \frac{d}{dt}\|(\delta u_\varepsilon,&\delta v_\varepsilon,\delta T_{e\varepsilon},\delta q_{e\varepsilon})\|_2^2+\frac12\|(\nabla\delta u_\varepsilon,\nabla\delta v_\varepsilon)\|_2^2 +\frac{1+\alpha}{\varepsilon}\|q_{e\varepsilon}^+\|_2^2\nonumber\\
  \leq&C\int_{\mathbb R^2}[(|\nabla u|+|\nabla v|+|v|^2)(|\delta u_\varepsilon|^2+|\delta v_\varepsilon|^2)+|\delta T_{e\varepsilon}|^2+|\delta q_{e\varepsilon}|^2\nonumber\\
  &+|\nabla T_e||\delta u_\varepsilon||\delta T_{e\varepsilon}|+|\nabla q_e||\delta u_\varepsilon||\delta q_{e\varepsilon}|]dxdy+C\varepsilon\|\nabla v\|_2^2,
\end{align*}
from which, it follows from the H\"older, Ladyzhenskay, Gagliardo-Nirenberg, $\|\varphi\|_{\frac{2m}{m-2}}\leq C\|\varphi\|_2^{\frac{m-2}{m}}\|\nabla\varphi\|_2^{\frac2m}$, and Young inequalities that
\begin{align}
&\frac{d}{dt}\|(\delta u_\varepsilon,\delta v_\varepsilon, \delta T_{e\varepsilon},\delta q_{e\varepsilon})\|_2^2 +\frac12\|(\nabla\delta u_\varepsilon,\nabla\delta v_\varepsilon)\|_2^2 +\frac{1+\alpha}{\varepsilon}\|q_{e\varepsilon}^+\|_2^2\nonumber\\
  \leq&C\left(\|(\nabla u , \nabla v )\|_2 +\| v \|_4^2\right)\|(\delta u_\varepsilon,\delta v_\varepsilon)\|_4^2+C\|(\delta T_{e\varepsilon}, \delta q_{e\varepsilon})\|_2^2\nonumber\\
  &+C\|(\nabla T_e,\nabla q_e)\|_m \|\delta u_\varepsilon\|_{\frac{2m}{m-2}}\|(\delta T_{e\varepsilon},  \delta q_{e\varepsilon})\|_2+C\varepsilon\|\nabla v\|_2^2\nonumber\\
  \leq&C\left(\|(\nabla u, \nabla v)\|_2+\|v\|_2\|\nabla v\|_2\right)\|(\delta u_\varepsilon,\delta v_\varepsilon)\|_2  \|(\nabla\delta u_\varepsilon,\nabla\delta v_\varepsilon)\|_2+C\varepsilon\|\nabla v\|_2^2\nonumber\\
  &+C\|(\delta T_{e\varepsilon}, \delta q_{e\varepsilon})\|_2^2+C\|(\nabla T_e,\nabla  q_e)\|_m\|\delta u_\varepsilon\|_{2}^{\frac{m-2}{m}}\|\nabla\delta u_\varepsilon\|_2^{\frac2m}\|(\delta T_{e\varepsilon}, \delta q_{e\varepsilon})\|_2\nonumber\\
  \leq&\frac14\|(\nabla\delta u_\varepsilon,\nabla\delta v_\varepsilon)\|_2^2+C\left(\|(\nabla u, \nabla v )\|_2^2+\|v \|_2^2\|\nabla v\|_2^2\right)\|(\delta u_\varepsilon,\delta v_\varepsilon)\|_2^2 +C\varepsilon\|\nabla v\|_2^2\nonumber\\
  &+C\|(\delta T_{e\varepsilon}, \delta q_{e\varepsilon})\|_2^2+C\|(\nabla T_e,\nabla q_e)\|_m^{\frac{m}{m-1}}\|\delta u_\varepsilon\|_{2}^{\frac{m-2}{m-1}} \|(\delta T_{e\varepsilon}, \delta q_{e\varepsilon})\|_2^{\frac{m}{m-1}}\nonumber\\
  \leq&\frac14\|(\nabla\delta u_\varepsilon,\nabla\delta v_\varepsilon)\|_2^2+C\Big(1+\|(\nabla u, \nabla v)\|_2^2
  +\|v\|_2^2\|\nabla v\|_2^2\nonumber\\
  &+\|(\nabla  T_e,\nabla q_e)\|_m^{\frac{m}{m-1}}\Big)
  \|(\delta u_\varepsilon,\delta v_\varepsilon,\delta T_{e\varepsilon}, \delta q_{e\varepsilon})\|_2^2+C\varepsilon\|\nabla v\|_2^2.
\end{align}
Therefore, we have
\begin{align*}
  \frac{d}{dt}\|(\delta u_\varepsilon,&\delta v_\varepsilon,\delta T_{e\varepsilon},\delta q_{e\varepsilon})\|_2^2+\frac14\|(\nabla\delta u_\varepsilon,\nabla\delta v_\varepsilon)\|_2^2 +\frac{1+\alpha}{\varepsilon}\|q_{e\varepsilon}^+\|_2^2\nonumber\\
  \leq&C\Big(1+\|(\nabla u, \nabla v)\|_2^2
  +\|v\|_2^2\|\nabla v\|_2^2
  +\|(\nabla  T_e,\nabla q_e)\|_m^{\frac{m}{m-1}}\Big)\\
  &\times\|(\delta u_\varepsilon,\delta v_\varepsilon,\delta T_{e\varepsilon}, \delta q_{e\varepsilon})\|_2^2+C\varepsilon\|\nabla v\|_2^2.
\end{align*}

Applying the Gronwall inequality to the above inequality and recalling the regularities of $(u, v, T_e, q_e)$ yield
\begin{align*}
  &\sup_{0\leq t\leq\mathcal T}\|(\delta u_\varepsilon,\delta v_\varepsilon,\delta T_{e\varepsilon},\delta q_{e\varepsilon})(t)\|_2^2+\int_0^\mathcal T\left(\|(\nabla\delta u_\varepsilon,\nabla\delta v_\varepsilon)\|_2^2 +\frac{\|q_{e\varepsilon}^+\|_2^2}{\varepsilon}\right)dt\leq C\varepsilon,
\end{align*}
for a positive constant $C$ depending only on
$\alpha,\bar Q, \mathcal T, m$, $\|(u_0, v_0, T_{e,0}, q_{e,0})\|_{H^1}$ and $\|(\nabla T_{e,0}, \nabla q_{e,0})\|_{m}$. This proves the desired estimate in the theorem, while the strong convergences are direct consequences of this estimate.
\end{proof}

\section{Appendix}

In this appendix, we state and prove several parabolic estimates, which have been used in the previous sections.

\begin{lemma}\label{lemapp1}
Given a time $\mathcal T\in(0,\infty)$, and a function $g\in L^\alpha(0,\mathcal T; L^\beta(\mathbb R^2))$, with $1<\alpha,\beta<\infty$. Let $U$ be the unique solution to
\begin{equation*}
  \left\{
  \begin{array}{l}
  \partial_t U-\Delta U=g,\quad\mbox{in }\mathbb R^2\times(0,\mathcal T), \\
  U|_{t=0}=0,\quad\mbox{in }\mathbb R^2.
  \end{array}
  \right.
\end{equation*}
Then, we have the estimate
$$
\|\partial_tU\|_{L^\alpha(0,\mathcal T; L^\beta(\mathbb R^2))}+\|\Delta U\|_{L^\alpha(0,\mathcal T; L^\beta(\mathbb R^2))}\leq C_{\alpha,\beta}\|g\|_{L^\alpha(0,\mathcal T; L^\beta(\mathbb R^2))},
$$
where $C_{\alpha,\beta}$ is a positive constant depending only on $\alpha,\beta$, and in particular is independent of $\mathcal T$ and $g$.
\end{lemma}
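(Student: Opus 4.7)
The plan is to reduce the problem on the finite time interval to a maximal regularity estimate on all of $\mathbb R \times \mathbb R^2$, for which the constant is manifestly independent of $\mathcal T$, and then restrict back.

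First I would extend $g$ to a function $\tilde g$ on $\mathbb R \times \mathbb R^2$ by setting $\tilde g \equiv 0$ for $t \notin (0,\mathcal T)$, so that $\|\tilde g\|_{L^\alpha(\mathbb R; L^\beta(\mathbb R^2))} = \|g\|_{L^\alpha(0,\mathcal T; L^\beta(\mathbb R^2))}$, and define, using the heat semigroup on $\mathbb R^2$,
\begin{equation*}
\tilde U(t) = \int_{-\infty}^{t} e^{(t-s)\Delta}\tilde g(s)\,ds = \int_0^\infty e^{s\Delta}\tilde g(t-s)\,ds, \qquad t \in \mathbb R.
\end{equation*}
Since $\tilde g(s) \equiv 0$ for $s \leq 0$, one has $\tilde U(t) \equiv 0$ for $t \leq 0$; by uniqueness for the Cauchy problem, $\tilde U|_{[0,\mathcal T]} = U$. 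Thus it suffices to prove
\begin{equation*}
\|\partial_t \tilde U\|_{L^\alpha(\mathbb R; L^\beta(\mathbb R^2))} + \|\Delta \tilde U\|_{L^\alpha(\mathbb R; L^\beta(\mathbb R^2))} \leq C_{\alpha,\beta}\|\tilde g\|_{L^\alpha(\mathbb R; L^\beta(\mathbb R^2))}.
\end{equation*}

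Second, I would establish this whole-line estimate by Fourier/singular-integral techniques. Taking the space-time Fourier transform of $\partial_t \tilde U - \Delta \tilde U = \tilde g$, one gets
\begin{equation*}
\widehat{\partial_t \tilde U}(\tau,\xi) = \frac{i\tau}{i\tau + |\xi|^2}\,\widehat{\tilde g}(\tau,\xi), \qquad \widehat{-\Delta \tilde U}(\tau,\xi) = \frac{|\xi|^2}{i\tau + |\xi|^2}\,\widehat{\tilde g}(\tau,\xi).
\end{equation*}
Both symbols are bounded, homogeneous of degree zero under the parabolic scaling $(\tau,\xi) \mapsto (\lambda^2\tau,\lambda\xi)$, and satisfy the Mikhlin--Hörmander conditions adapted to this scaling. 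Therefore, by the vector-valued Mikhlin multiplier theorem (or equivalently, by the Benedek--Calderón--Panzone theorem applied to the operator-valued kernels $\partial_t e^{t\Delta}$ and $\Delta e^{t\Delta}$, which are standard Calderón--Zygmund kernels with values in $\mathcal L(L^\beta(\mathbb R^2))$), one obtains the desired $L^\alpha(\mathbb R; L^\beta(\mathbb R^2))$ bound. This is the classical maximal $L^p$--$L^q$ parabolic regularity result; see, e.g., Solonnikov \cite{SOLONNIKOV1,SOLONNIKOV2}. Because the Fourier multipliers are scale-invariant under parabolic dilations and the underlying space $\mathbb R \times \mathbb R^2$ has no intrinsic length scale, the multiplier bound depends only on $\alpha$ and $\beta$.

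Combining the two steps and restricting $\tilde U$ to $[0,\mathcal T]$ gives the conclusion with a constant $C_{\alpha,\beta}$ independent of $\mathcal T$ and $g$. The main obstacle is the second step: the whole-line maximal regularity estimate is non-trivial harmonic analysis (it fails, for instance, at the endpoints $\alpha,\beta \in \{1,\infty\}$), and one must cite or establish the vector-valued Calderón--Zygmund/Mikhlin theorem together with the verification that the parabolic kernel satisfies the required smoothness and cancellation. The $L^2$--$L^2$ case ($\alpha=\beta=2$) is elementary via Plancherel, so one could alternatively run the proof by combining the $L^2$ case with a Calderón--Zygmund decomposition in parabolic cubes to obtain a weak-type $(1,1)$ estimate and then interpolate and dualize.
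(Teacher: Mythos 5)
Your argument is correct, but it follows a genuinely different route from the paper. The paper's proof keeps the problem on a finite time interval: it introduces the parabolically rescaled functions $U_{\mathcal T}(x,t)=U(\sqrt{\mathcal T}x,\mathcal Tt)$ and $g_{\mathcal T}(x,t)=g(\sqrt{\mathcal T}x,\mathcal Tt)$, which solve the same equation on $\mathbb R^2\times(0,1)$ with source $\mathcal T g_{\mathcal T}$, applies an off-the-shelf maximal $L^\alpha$--$L^\beta$ regularity theorem on the fixed unit interval (citing Coulhon--Duong, Hieber--Pr\"uss, Kunstmann--Weis), and then reads off the $\mathcal T$-independence of the constant from the explicit scaling factors $\mathcal T^{1-\frac1\alpha-\frac1\beta}$ and $\mathcal T^{-\frac1\alpha-\frac1\beta}$ of the two sides. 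You instead obtain the $\mathcal T$-independence by extending $g$ by zero to all of $\mathbb R\times\mathbb R^2$, identifying $U$ with the restriction of the global Duhamel solution $\tilde U$, and invoking (or proving) the translation-invariant whole-line estimate via parabolic Fourier multipliers or operator-valued Calder\'on--Zygmund theory. Both are legitimate: the paper's scaling trick is shorter and lets one quote any finite-interval maximal regularity statement as a black box, exploiting the dilation invariance of $\mathbb R^2$; your extension-by-zero argument makes the uniformity in $\mathcal T$ structurally transparent, at the price of needing the global-in-time version of maximal regularity, which is a heavier piece of harmonic analysis if one insists on proving rather than citing it (your own caveats about the vector-valued Mikhlin/Benedek--Calder\'on--Panzone machinery, the UMD property of $L^\beta$ for $1<\beta<\infty$, and the failure at the endpoints are accurate). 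The only point I would ask you to make explicit is the justification of the symbol computation $\widehat{\partial_t\tilde U}=\frac{i\tau}{i\tau+|\xi|^2}\widehat{\tilde g}$ for general $g\in L^\alpha(0,\mathcal T;L^\beta)$, which should be done first for smooth compactly supported data and then extended by density; this is routine but worth a sentence.
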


\begin{proof}
  Introducing the scaled functions $U_{\mathcal T}$ and $g_{\mathcal T}$ as
  $$
  U_{\mathcal T}(x,t)=U(\sqrt{\mathcal T}x,\mathcal Tt), \quad g_{\mathcal T}(x,t)=g(\sqrt{\mathcal T}x,\mathcal Tt),\quad x\in\mathbb R^2, t\in(0,1),
  $$
  then one can easily verify that $U_{\mathcal T}$ and $g_{\mathcal T}$ satisfy
  \begin{equation*}
  \left\{
  \begin{array}{l}
  \partial_t U_{\mathcal T}-\Delta U_{\mathcal T}=\mathcal Tg_{\mathcal T}, \quad\mbox{in }\mathbb R^2\times(0,1), \\
  U|_{t=0}=0,\quad\mbox{in }\mathbb R^2.
  \end{array}
  \right.
\end{equation*}
 Applying the maximal regularity theory for parabolic equations to the above system  (see, e.g., \cite{COULHONDUONG},  \cite{HIEBERPRUSS} and \cite{KUNWEI}),  one has
$$
\|\partial_tU_\mathcal T\|_{L^\alpha(0,1; L^\beta(\mathbb R^2))}+\|\Delta U_\mathcal T\|_{L^\alpha(0,1; L^\beta(\mathbb R^2))}\leq C_{\alpha,\beta}\mathcal T\|g_\mathcal T\|_{L^\alpha(0,\mathcal T; L^\beta(\mathbb R^2))}.
$$
From which, and after observing that,
\begin{eqnarray*}
  &&\|\partial_tU_{\mathcal T}\|_{L^\alpha(0,1; L^\beta(\mathbb R^2))}=\mathcal T^{1-\frac 1\alpha-\frac1\beta}\|\partial_tU\|_{L^\alpha(0,\mathcal T; L^\beta(\mathbb R^2))},\\
  &&\|\Delta U_{\mathcal T}\|_{L^\alpha(0,1; L^\beta(\mathbb R^2))}=\mathcal T^{1-\frac 1\alpha-\frac1\beta}\|\Delta U\|_{L^\alpha(0,\mathcal T; L^\beta(\mathbb R^2))},\\
  &&\|g_{\mathcal T}\|_{L^\alpha(0,1; L^\beta(\mathbb R^2))}=\mathcal T^{-\frac 1\alpha-\frac1\beta}\|g\|_{L^\alpha(0,\mathcal T; L^\beta(\mathbb R^2))},
\end{eqnarray*}
one obtains the conclusion.
\end{proof}

\begin{lemma}\label{lemapp2}
Given a time $\mathcal T\in(0,\infty)$, and let $f$ and $g$ be two functions, such that $f\in L^2(\mathbb R^2\times(0,\mathcal T))$ and $g\in L^4(\mathbb R^2\times(0,\mathcal T)$. Let $v$ be the unique solution to
\begin{equation*}
  \left\{
  \begin{array}{l}
  \partial_tv-\Delta v=f+\nabla g,\quad\mbox{in }\mathbb R^2\times(0,\mathcal T), \\
  v|_{t=0}=v_0\in H^1(\mathbb R^2),\quad\mbox{in }\mathbb R^2.
  \end{array}
  \right.
\end{equation*}
Then we have the following estimate
$$
\int_0^{\mathcal T}\|\nabla v\|_4^4dt\leq C\left(\|\nabla v_0\|_2^4+\left(\int_0^\mathcal T\|f\|_2^2dt\right)^2+\int_0^\mathcal T\|g\|_4^4dt\right),
$$
where $C$ is an absolute constant, and in particular is independent of $\mathcal T, v_0$, $f$ and $g$.
\end{lemma}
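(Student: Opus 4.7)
The idea is to split the linear problem by superposition, writing $v=v_1+v_2+v_3$, where $v_1$ carries the initial datum, $v_2$ carries the $L^2$ forcing $f$, and $v_3$ carries the divergence-type forcing $\nabla g$. Concretely, let $v_1$ solve the homogeneous heat equation with $v_1|_{t=0}=v_0$, let $v_2$ solve $\partial_t v_2-\Delta v_2=f$ with $v_2|_{t=0}=0$, and let $v_3$ solve $\partial_t v_3-\Delta v_3=\nabla g$ with $v_3|_{t=0}=0$. Since $\|\nabla v\|_4^4\leq 27(\|\nabla v_1\|_4^4+\|\nabla v_2\|_4^4+\|\nabla v_3\|_4^4)$, it suffices to bound each of the three summands separately and add.

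For $v_1$, I would test the equation against $-\Delta v_1$ to obtain the standard energy identity $\sup_{0\leq t\leq\mathcal T}\|\nabla v_1(t)\|_2^2+2\int_0^{\mathcal T}\|\Delta v_1\|_2^2\,dt\leq\|\nabla v_0\|_2^2$. Combining the Ladyzhenskaya inequality with the identity $\|\nabla^2 v_1\|_2=\|\Delta v_1\|_2$ on $\mathbb R^2$ yields $\|\nabla v_1\|_4^4\leq C\|\nabla v_1\|_2^2\|\Delta v_1\|_2^2$; pulling the supremum of the first factor outside the time integral gives $\int_0^{\mathcal T}\|\nabla v_1\|_4^4\,dt\leq C\|\nabla v_0\|_2^4$. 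The same computation for $v_2$, with the cross term $\int f\cdot\Delta v_2\,dxdy$ absorbed by Young's inequality, produces $\sup_t\|\nabla v_2\|_2^2+\int_0^{\mathcal T}\|\Delta v_2\|_2^2\,dt\leq\int_0^{\mathcal T}\|f\|_2^2\,dt$, and a second application of Ladyzhenskaya then gives $\int_0^{\mathcal T}\|\nabla v_2\|_4^4\,dt\leq C\bigl(\int_0^{\mathcal T}\|f\|_2^2\,dt\bigr)^2$.

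For $v_3$, which is the only piece that genuinely exploits $g$ itself rather than its gradient, I would use the observation that $v_3=\nabla w$, where $w$ is the scalar solution to $\partial_t w-\Delta w=g$ with $w|_{t=0}=0$. By Lemma \ref{lemapp1} with $\alpha=\beta=4$ one has $\|\Delta w\|_{L^4(0,\mathcal T;L^4(\mathbb R^2))}\leq C\|g\|_{L^4(0,\mathcal T;L^4(\mathbb R^2))}$, and the classical Calder\'on--Zygmund estimate (equivalently, boundedness of the Riesz transforms on $L^4(\mathbb R^2)$) upgrades this to $\|\nabla^2 w\|_{L^4(0,\mathcal T;L^4(\mathbb R^2))}\leq C\|g\|_{L^4(0,\mathcal T;L^4(\mathbb R^2))}$. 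Since $\nabla v_3=\nabla^2 w$, this delivers $\int_0^{\mathcal T}\|\nabla v_3\|_4^4\,dt\leq C\int_0^{\mathcal T}\|g\|_4^4\,dt$, completing the decomposition.

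The main obstacle is the treatment of $v_3$: the divergence-form source $\nabla g$, together with the mere $L^4$ integrability of $g$, takes us outside the pure energy framework of $v_1$ and $v_2$, and forces us to combine the space-time maximal regularity provided by Lemma \ref{lemapp1} at the exponent $(4,4)$ with a singular-integral estimate to trade $\Delta$ for $\nabla^2$. Everything else is routine $\dot H^1$ energy estimates interpolated by Ladyzhenskaya, and the absence of $\mathcal T$ on the right-hand side comes out automatically because all three building-block estimates are scaling-invariant.
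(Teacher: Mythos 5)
Your proof is correct and follows essentially the same route as the paper: energy estimates plus the Ladyzhenskaya inequality for the parts driven by $v_0$ and $f$, and the identification $v_3=\nabla w$ combined with Lemma \ref{lemapp1} at exponents $(4,4)$ and the Calder\'on--Zygmund bound $\|\nabla^2 w\|_4\leq C\|\Delta w\|_4$ for the part driven by $\nabla g$. The only (immaterial) difference is that you split $v$ into three pieces where the paper lumps the initial datum and the $f$-forcing into a single piece $\bar v$.
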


\begin{proof}
  Decompose $v$ as $v=\bar v+\hat v$, where $\bar v$ and $\hat v$ are the unique solutions to systems
  \begin{equation*}
  \left\{
  \begin{array}{l}
  \partial_t\bar v-\Delta\bar  v=f, \quad\mbox{in }\mathbb R^2\times(0,\mathcal T), \\
  \bar v|_{t=0}=v_0\in H^1(\mathbb R^2),\quad\mbox{in }\mathbb R^2,
  \end{array}
  \right.
\end{equation*}
and
\begin{equation}\label{App1}
  \left\{
  \begin{array}{l}
  \partial_t\hat v-\Delta \hat v=\nabla g,\quad\mbox{in }\mathbb R^2\times(0,\mathcal T), \\
  v|_{t=0}=0,\quad\mbox{in }\mathbb R^2,
  \end{array}
  \right.
\end{equation}
respectively.
The standard energy approach (multiplying the equation for $\bar v$ by $-\Delta\bar v$, integrating over $\mathbb R^2$, integration by parts, using the Young, and integrating with respect to $t$ over $(0,\mathcal T)$) to the system for $\bar v$ leads to
$$
\sup_{0\leq t\leq\mathcal T}\|\nabla\bar v(t)\|_2^2+\int_0^\mathcal T\|\Delta\bar v\|_2^2dt\leq\|\nabla v_0\|_2^2+\int_0^\mathcal T\|f\|_2^2dt.
$$

Defining $U$ to be the unique solution to the system
\begin{equation*}
  \left\{
  \begin{array}{l}
  \partial_t U-\Delta U=g,\quad\mbox{in }\mathbb R^2\times(0,\mathcal T), \\
  U|_{t=0}=0,\quad\mbox{in }\mathbb R^2.
  \end{array}
  \right.
\end{equation*}
Then $\nabla U$ satisfies the same system as that for $\hat v$, and therefore, by the uniqueness of the solutions to system (\ref{App1}), we have $\hat v=\nabla U$. Thanks to this fact, and applying Lemma \ref{lemapp1}, it follows from the elliptic estimates that
\begin{eqnarray*}
&\|\nabla\hat v\|_{L^4(0,\mathcal T; L^4(\mathbb R^2))}=\|\nabla^2U\|_{L^4(0,\mathcal T; L^4(\mathbb R^2))}\\
&\leq C\|\Delta U\|_{L^4(0,\mathcal T; L^4(\mathbb R^2))}\leq C\|g\|_{L^4(0,\mathcal T; L^4(\mathbb R^2))},
\end{eqnarray*}
for an absolute positive constant $C$.

Combining the estimates for $\bar v$ and $\hat v$, we deduce from the Ladyzhenskaya inequality that
\begin{align*}
  \int_0^\mathcal T\|\nabla v\|_4^4dt\leq&C\int_0^\mathcal T\|\nabla\bar v\|_4^4dt+C\int_0^\mathcal T\|\nabla\hat v\|_4^4dt\\
  \leq&C\left(\sup_{0\leq t\leq \mathcal T}\|\nabla\bar v(t)\|_2^2\right)\int_0^\mathcal T\|\Delta\bar v\|_2^2dt+ C\|g\|_{L^4(0,\mathcal T; L^4(\mathbb R^2))}^4\\
  \leq&C\left(\|\nabla v_0\|_2^4+\left(\int_0^\mathcal T\|f\|_2^2dt\right)^2+\int_0^\mathcal T\|g\|_4^4dt\right),
\end{align*}
for an absolute positive constant $C$. This completes the proof.
\end{proof}

\begin{lemma}
\label{lemapp3}
Given a time $\mathcal T\in(0,\infty)$ and a number $m\in(2,\infty)$. Let $f\in L^2(0,\mathcal T; L^m(\mathbb R^2))$, and $v$ be the unique solution to
\begin{equation*}
  \left\{
  \begin{array}{l}
  \partial_tv-\Delta v=f,\quad\mbox{in }\mathbb R^2\times(0,\mathcal T),\\
  v|_{t=0}=v_0\in H^1(\mathbb R^2).
  \end{array}
  \right.
\end{equation*}
Then, we have the following estimate
$$
\int_0^\mathcal T\|\Delta v\|_mdt\leq C_m(1+\sqrt\mathcal T)\left[\|\nabla v_0\|_2+ \left(\int_0^\mathcal T\|f\|_m^2dt\right)^{\frac12}\right],
$$
where $C_m$ is a positive constant depending only on $m$, and in particular is independent of $\mathcal T, f$ and $v_0$.
\end{lemma}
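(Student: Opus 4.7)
The plan is to decompose $v=\bar v+\hat v$, exactly as was done in the proof of Proposition~\ref{h1v}, where $\bar v$ solves the forced parabolic equation with zero initial datum (and forcing $f$) while $\hat v$ solves the homogeneous heat equation with initial datum $v_0$. Each piece will be estimated with a different tool, and then the triangle inequality will combine them into the claimed bound.

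For $\bar v$, I would invoke Lemma~\ref{lemapp1} with $\alpha=2$ and $\beta=m$ to obtain $\|\Delta\bar v\|_{L^2(0,\mathcal T;L^m)}\le C_m\|f\|_{L^2(0,\mathcal T;L^m)}$. A single application of Cauchy--Schwarz in time then converts this into
\begin{equation*}
\int_0^{\mathcal T}\|\Delta\bar v\|_m\,dt\le C_m\sqrt{\mathcal T}\left(\int_0^{\mathcal T}\|f\|_m^2\,dt\right)^{1/2},
\end{equation*}
which is in the desired form.

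For $\hat v$, I would mimic the multiplier trick used for $\hat u$ in Proposition~\ref{h1v}: testing $\partial_t\hat v-\Delta\hat v=0$ against $(t\Delta^2-\Delta)\hat v$ yields, after the usual integration by parts,
\begin{equation*}
\sup_{0\le t\le\mathcal T}\bigl(\|\nabla\hat v\|_2^2+t\|\Delta\hat v\|_2^2\bigr)+\int_0^{\mathcal T}\bigl(\|\Delta\hat v\|_2^2+t\|\nabla\Delta\hat v\|_2^2\bigr)dt\le\|\nabla v_0\|_2^2.
\end{equation*}
I then pass to the $L^m$-norm through the two-dimensional Gagliardo--Nirenberg inequality $\|\Delta\hat v\|_m\le C_m\|\Delta\hat v\|_2^{2/m}\|\nabla\Delta\hat v\|_2^{(m-2)/m}$, and insert $t^{(m-2)/(2m)}t^{-(m-2)/(2m)}$ to regroup the integrand as the product of $\|\Delta\hat v\|_2^{2/m}$, $\|\sqrt t\,\nabla\Delta\hat v\|_2^{(m-2)/m}$, and $t^{-(m-2)/(2m)}$. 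H\"older's inequality in $t$ with the three exponents $m$, $2m/(m-2)$, $2$ (whose reciprocals sum to $1$) turns the first two factors into $\|\nabla v_0\|_2^{2/m+(m-2)/m}=\|\nabla v_0\|_2$ by the two a priori bounds above, while the singular factor contributes the finite integral $\int_0^{\mathcal T}t^{-(m-2)/m}dt=\tfrac{m}{2}\mathcal T^{2/m}$. This yields $\int_0^{\mathcal T}\|\Delta\hat v\|_m\,dt\le C_m\,\mathcal T^{1/m}\|\nabla v_0\|_2$.

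To finish, I observe that for $m\ge 2$ one has $\mathcal T^{1/m}\le 1+\sqrt{\mathcal T}$ on $[0,\infty)$, since $\mathcal T^{1/m}\le 1$ when $\mathcal T\le 1$ and $\mathcal T^{1/m}\le\sqrt{\mathcal T}$ when $\mathcal T\ge 1$. Combining this with the estimate for $\bar v$ via $\|\Delta v\|_m\le\|\Delta\bar v\|_m+\|\Delta\hat v\|_m$ produces the inequality in the statement. I expect the only mildly tricky step to be the bookkeeping of time powers: namely, checking that the three H\"older exponents are admissible (i.e.\ that the reciprocals sum to one and the singular time factor is integrable) and that $\mathcal T^{1/m}$ can indeed be absorbed into $1+\sqrt{\mathcal T}$; everything else amounts to standard parabolic maximal regularity, an energy identity for the heat semigroup, and two-dimensional interpolation.
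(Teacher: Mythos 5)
Your proposal is correct and follows essentially the same route as the paper's proof: the identical decomposition $v=\bar v+\hat v$, Lemma~\ref{lemapp1} plus Cauchy--Schwarz for $\bar v$, the multiplier $(t\Delta^2-\Delta)\hat v$ with the weighted energy identity, and the same Gagliardo--Nirenberg/three-exponent H\"older step producing $C_m\mathcal T^{1/m}\|\nabla v_0\|_2$. The exponent bookkeeping and the absorption of $\mathcal T^{1/2}$ and $\mathcal T^{1/m}$ into $C_m(1+\sqrt{\mathcal T})$ all check out.
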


\begin{proof}
Decompose $v$ as $v=\bar v+\hat v$, where $\bar v$ and $\hat v$ are the unique solutions to systems
\begin{equation*}
  \left\{
  \begin{array}{l}
  \partial_t\bar v-\Delta \bar v=f,\quad\mbox{in }\mathbb R^2\times(0,\mathcal T),\\
  v|_{t=0}=0,
  \end{array}
  \right.
\end{equation*}
and
\begin{equation}
  \left\{
  \begin{array}{l}
  \partial_t\hat v-\Delta\hat v=0,\quad\mbox{in }\mathbb R^2\times(0,\mathcal T),\\
  v|_{t=0}=v_0\in H^1(\mathbb R^2),\label{App2}
  \end{array}
  \right.
\end{equation}
respectively.

By Lemma \ref{lemapp1} and using the H\"older inequality, for $\bar v$, we have the estimate
$$
\int_0^\mathcal T\|\Delta\bar v\|_mdt\leq\mathcal T^{\frac12}\left(\int_0^\mathcal T\|\Delta\bar v\|_m^2dt\right)^{\frac12}\leq C_m\mathcal T^{\frac12}\left(\int_0^\mathcal T\|f\|_m^2dt\right)^{\frac12}.
$$
To estimate $\hat v$, we multiplying equation (\ref{App2}) by $t\Delta^2\hat v-\Delta\hat v$, integrating the resultant over $\mathbb R^2$, then it follows from integration by parts that
$$
\frac12\frac{d}{dt}(\|\nabla\hat v\|_2^2+\|\sqrt t\Delta\hat v\|_2^2)+\frac12\|\Delta\hat v\|_2^2+\|\sqrt t\nabla\Delta\hat v\|_2^2=0,
$$
from which, integrating with respect to $t$ yields
$$
\sup_{0\leq t\leq\mathcal T}(\|\nabla\hat v(t)\|_2^2+\|\sqrt t\Delta\hat v(t)\|_2^2)
+\int_0^\mathcal T(\|\Delta\hat v\|_2^2+\|\sqrt t\nabla\Delta\hat v\|_2^2)dt\leq\|\nabla v_0\|_2^2.
$$
Thanks to this estimate, by the Gagliardo-Nirenberg, $\|\varphi\|_m\leq C\|\varphi\|_2^{\frac2m}\|\nabla\varphi\|_2^{1-\frac2m}$, and H\"older inequalities, we deduce
\begin{align*}
  \int_0^\mathcal T&\|\Delta\hat v\|_mdt\leq C\int_0^\mathcal T\|\Delta\hat v\|_2^{\frac2m}\|\nabla\Delta\hat v\|_2^{1-\frac2m}dt\\
  =&C\int_0^\mathcal T\|\Delta\hat v\|_2^{\frac2m}\|\sqrt t\nabla\Delta\hat v\|_2^{1-\frac2m}t^{-\frac12(1-\frac2m)}dt\\
  \leq&C\left(\int_0^\mathcal T\|\Delta\hat v\|_2^2dt\right)^{\frac1m}\left(\int_0^\mathcal T\|\sqrt t\nabla\Delta\hat v\|_2^2dt\right)^{\frac{m-2}{2m}}\left(\int_0^\mathcal Tt^{-(1-\frac2m)}dt\right)^{\frac12}\\
  \leq&C\sqrt m\mathcal T^{\frac1m}\|\nabla v_0\|_2.
\end{align*}

Combining the estimates for $\bar v$ and $\hat v$, we then deduce from the Young inequality (recalling $m>2$) that
\begin{align*}
  \int_0^\mathcal T&\|\Delta v\|_mdt\leq\int_0^\mathcal T(\|\Delta\hat v\|_m+\|\Delta\bar v\|_m)dt\\
  \leq&C_m\mathcal T^{\frac12}\left(\int_0^\mathcal T\|f\|_m^2dt\right)^{\frac12}+C\sqrt m\mathcal T^{\frac1m}\|\nabla v_0\|_2\\
  \leq&C_m(1+\sqrt\mathcal T)\left[\|\nabla v_0\|_2+ \left(\int_0^\mathcal T\|f\|_m^2dt\right)^{\frac12}\right],
\end{align*}
proving the conclusion.
\end{proof}
\section*{Acknowledgments}
{E.S.T. is thankful to the kind hospitality of the \'Ecole Polytechnique (CMLS), Paris, where part of this work was completed. This work was supported in part by the ONR grant N00014-15-1-2333 and the NSF grants DMS-1109640 and DMS-1109645.
}
\par


\begin{thebibliography}{50}
\bibitem{AZGU}
Az\'erad,~P.; Guill\'en,~F.: \emph{Mathematical justification of the hydrostatic approximation in the primitive equations of geophysical fluid dynamics}, SIAM J. Math. Anal., \bf33 \rm(2001), 847--859.

\bibitem{Brezis_Gallouet_1980}
Br{\'e}zis,~H.; T. Gallouet: {\it Nonlinear {S}chr\"odinger evolution equations}, Nonlinear Anal., {\bf 4}~\rm(1980),  677--681.

\bibitem{Brezis_Wainger_1980}
Br{\'e}zis,~H.; Wainger,~S.:{\it A Note on limiting cases of Sobolev embeddings and convolution inequalities}, Comm. Partial Differential Equations, {\bf 5}~\rm(1980),  773--789.

\bibitem{CINT}
Cao,~C.; Ibrahim,~S.; Nakanishi,~K.; Titi,~E.~S.: \emph{Finite-time blowup for the inviscid primitive equations of oceanic and atmospheric dynamics,} Commum. Math. Phys., \bf337 \rm(2015), 473--482.

\bibitem{CAOLITITI1}
Cao,~C.; Li, J.; Titi, E.~S.: \emph{Local and global well-posedness of strong solutions to the 3D primitive equations with vertical eddy diffusivity,} Arch. Rational Mech. Anal., \bf214~\rm(2014), 35--76.

\bibitem{CAOLITITI2}
Cao,~C.; Li, J.; Titi, E.~S.: \emph{Global well-posedness of strong solutions to the 3D primitive equations with horizontal eddy diffusivity}, J. Differential Equations, \bf257~\rm(2014), 4108--4132.

\bibitem{CAOLITITI3}
Cao,~C.; Li,~J.; Titi,~E.~S.: \emph{Global well-posedness of the 3D primitive equations with only horizontal viscosity and diffusivity}, Comm. Pure Appl. Math., DOI: 10.1002/cpa.21576, arXiv:1406.1995v1.

\bibitem{CAOLITITI4}
Cao,~C.; Li,~J.; Titi,~E.~S.: \emph{Strong solutions to the 3D primitive equations with horizontal dissipation: near $H^1$ initial data}, preprint.

\bibitem{CAOLITITI5}
Cao,~C.; Li, J.; Titi, E.~S.: \emph{Global well-posedness of the 3D primitive equations with horizontal viscosities and vertical diffusion}, preprint.

\bibitem{CAOTITI03}
Cao,~C., Titi,~E.~S.: \emph{Global well-posedness and finite-dimensional global attractor for a 3-D planetary geostrophic viscous model}, Comm. Pure Appl. Math., \bf56~\rm(2003), 198--233.

\bibitem{CAOTITI07}
Cao,~C.; Titi,~E.~S.: \emph{Global well-posedness of the three-dimensional viscous primitive equations of large scale ocean and atmosphere dynamics}, Ann. of Math., \bf166~\rm(2007), 245--267.

\bibitem{CAOTITI12}
Cao,~C.; Titi,~E.~S.: \emph{Global well-posedness of the 3D primitive equations with partial vertical turbulence mixing heat diffusion}, Comm. Math. Phys., \bf310~\rm(2012), 537--568.

\bibitem{COULHONDUONG}
Coulhon,~T.; Duong,~X.~T.: \emph{Maximal regularity and kernel bounds: observations on a theorem by Hieber and Pr\"uss},
Adv. Differential Equations, \bf5 \rm(2000), 343--368.

\bibitem{EVANS}
Evans,~L.~C.; Gariepy,~R.~F.: \emph{Measure Theory and Fine Properties of Functions}, Revised Edition, Text Book in Mathematics, CRC Press, Boca Raton, 2015.

\bibitem{FRIMAJPAU}
Frierson,~D.~M.~W.; Majda,~A.~J.; Pauluis,~O.~M.: \emph{Dynamics of precipitation fronts in the tropical atmosphere: a novel
relaxation limit}, Commun. Math. Sci., \bf2~\rm(2004), 591--626.

\bibitem{GIOVANNI}
Giovanni,~L.:
\emph{A first course in Sobolev spaces},
Graduate Studies in Mathematics, 105. American Mathematical Society, Providence, RI, 2009.

\bibitem{HAWI}
Haltiner,~G.; Williams,~R.: \emph{Numerical Weather Prediction and Dynamic Meteorology}, second ed., Wiley, New York, 1984.

\bibitem{HIEBERPRUSS}
Hieber,~M.; Pr\"uss,~J.: \emph{Heat kernels and maximal $L^p-L^q$ estimates for parabolic evolution equations}, Commun. Partial Differential Equaitons,  \bf 22 \rm(1997), 1647--1669.

\bibitem{KHOMAJ1}
Khouider,~B.; Majda,~A.~J.: \emph{A non-oscillatory well balanced scheme for an idealized tropical climate
model: I. Algorithm and validation}, Theor. Comput. Fluid Dyn., \bf19~\rm(2005), 331--354.

\bibitem{KHOMAJ2}
Khouider,~B.; Majda,~A.~J.: \emph{A non-oscillatory well balanced scheme for an idealized tropical climate
model: I. Algorithm and validation}, Theor. Comput. Fluid Dyn., \bf19~\rm(2005), 355--375.

\bibitem{KHOMAJSTE}
Khouider,~B.; Majda,~A.~J.; Stechmann,~S.~N.: \emph{Climate science in the tropics: waves, vortices and PDEs}, Nonlinearity, \bf26~\rm(2013), R1--R68.

\bibitem{KOB}
Kobelkov,~G.~M.: \emph{Existence of a solution in the large for the 3D large-scale ocean dynamics equations}, C. R. Math. Acad. Sci. Paris, \bf343 \rm(2006), 283--286.

\bibitem{KUNWEI}
Kunstmann,~P.~C.; Weis,~L.: \emph{Maximal $L_p$-regularity for Parabolic Equations, Fourier Multiplier Theorems and $H^\infty$ functional Calculus}, Functional Analytic Methods for Evolution Equations,
Lecture Notes in Mathematics Vol. 1855, 2004, 65--311.

\bibitem{KUZI1}
Kukavica,~I.; Ziane,~M.: \emph{On the regularity of the primitive equations of the ocean}, C. R. Math. Acad. Sci. Paris, \bf345 \rm(2007), 257--260.

\bibitem{KUZI2}
Kukavica,~I.; Ziane,~M.: \emph{On the regularity of the primitive equations}, Nonlinearity, \bf20 \rm(2007), 2739--2753.

\bibitem{LADYZHENSKAYA}
Ladyzhenskaya,~O.~A.: \emph{The mathematical theory of viscous incompressible flow}, Second English edition, revised and enlarged. Translated from the Russian by Richard A. Silverman and John Chu. Mathematics and its Applications, Vol. 2 Gordon and Breach, Science Publishers, New York-London-Paris 1969

\bibitem{LEWAN}
Lewandowski~R.: \emph{Analyse Math\'ematique et Oc\'eanographie}, Masson,
Paris, 1997.

\bibitem{LITITITCM}
Li,~J.; Titi,~E.~S.: \emph{Global well-posedness of strong solutions to a tropical climate model,} arXiv:1504.05285v1.

\bibitem{LITITIHYDRO}
Li,~J.; Titi,~E.~S.: \emph{Small aspect ratio limit from Navier-Stokes equations to primitive equations: mathematical justification of hydrostatic approximation}, preprint.


\bibitem{LTW92A}
Lions,~J.~L.; Temam,~R.; Wang,~S.: \emph{New formulations of the primitive equations of the atmosphere and appliations,} Nonlinearity, \bf5~\rm(1992), 237--288.

\bibitem{LTW92B}
Lions,~J.~L.; Temam,~R.; Wang,~S.: \emph{On the equations of the large-scale ocean}, Nonlinearity, \bf5~\rm(1992), 1007--1053.

\bibitem{LTW95}
Lions,~J.~L.; Temam,~R.; Wang,~S.: \emph{Mathematical study of the coupled models of atmosphere and ocean (CAO III)}, J. Math. Pures Appl., \bf74~\rm(1995), 105--163.

\bibitem{MAJBOOK}
Majda,~A.~J.: \emph{Introduction to PDEs and Waves for the Atmosphere and Ocean (Courant Lecture Notes in
Mathematics vol 9)}, Providence, RI: American Mathematical Society, 2003.

\bibitem{MAJAB}
Majda,~A.~J.: \emph{Climate science, waves and PDEs for the tropics}, (English summary) Nonlinear partial differential equations, 223--230, Abel Symp., 7, Springer, Heidelberg, 2012.

\bibitem{MAJBIE}
Majda,~A.~J.; Biello,~J.~A.: \emph{The nonlinear interaction of barotropic and equatorial baroclinic Rossby waves}, J. Atmos. Sci.,
\bf60~\rm(2003), 1809--1821.

\bibitem{MAJSOU}
Majda,~A.~J.; Souganidis,~P.~E.: \emph{Existence and uniqueness of weak solutions for precipitation fronts: a
novel hyperbolic free boundary problem in several space variables}, Commun. Pure Appl. Math., \bf63~\rm(2010), 1351--1361.

\bibitem{NEEZEN}
Neelin,~J.~D.; Zeng,~N.: \emph{A quasi-equilibrium tropical circulation model: formulation}, J. Atmos. Sci., \bf57~\rm(2000), 1741--1766.


\bibitem{PED}
Pedlosky,~J.: \emph{Geophysical Fluid Dynamics, 2nd edition}, Springer, New York, 1987.

\bibitem{SOLONNIKOV1}
Solonnikov,~V.~A.: \emph{Estimates of the solutions of a nonstationary linearized system of Navier-Stokes equations}, Trudy Mat. Inst. Steklov, vol. 70, 1964, In Amer. Math. Soc. Transl., Series 2, vol. 75, 1--17.

\bibitem{SOLONNIKOV2}
Solonnikov,~V.~A.: \emph{A priori estimates for second-order parabolic equations}, Trudy Mat. Inst. Steklov, vol. 70, 1964, In Amer. Math. Soc. Transl., Series 2, vol. 65, 51--137.

\bibitem{STEMAJ}
Stechmann,~S.~N.; Majda,~A.~J.: \emph{The structure of precipitation fronts for finite relaxation time}, Theor.
Comput. Fluid Dyn., \bf20~\rm(2006), 377--404.

\bibitem{VALLIS}
Vallis,~G.~K.: \emph{Atmospheric and Oceanic Fluid Dynamics,} Cambridge Univ. Press, 2006.

\bibitem{WP}
Washington,~W.~M., Parkinson,~C.~L.: \emph{An Introduction to Three Dimensional Climate Modeling}, Oxford University Press, Oxford, 1986.

\bibitem{WONG}
Wong,~T.~K.: \emph{Blowup of solutions of the hydrostatic Euler equations}, Proc. Amer. Math. Soc., \bf143 \rm(2015), 1119--1125.

\bibitem{ZENG}
Zeng,~Q.~C.: \emph{Mathematical and Physical Foundations of Numerical Weather Prediction,} Science Press, Beijing, 1979.


\bibitem{ZHKTZ}
Zelati1,~M.~C.; Huang,~A.; Kukavica,~I.; Temam,~R.; Ziane,~M.: \emph{The primitive equations of the atmosphere
in presence of vapour saturation}, Nonlinearity, \bf28 \rm(2015), 625--668.









\end{thebibliography}
\end{document}